\renewcommand{\d}{\mathrm d}
\newcommand{\R}{\mathbb R}
\newcommand{\Z}{\mathbb Z}
\newcommand{\wt}{\widetilde}
\newcommand{\ol}{\overline}
\renewcommand{\Re}{\operatorname{Re}}
\newcommand{\sgn}{\operatorname{sgn}}
\newcommand{\C}{\mathcal C}
\newcommand{\D}{\mathcal D}
\newcommand{\g}{\mathrm g}
\renewcommand{\c}{\mathrm c}
\newcommand{\p}{\mathrm p}
\renewcommand{\a}{\mathrm a}
\newcommand{\gap}{\mathrm{gap}}
\newcommand{\id}{\mathbbm{1}}
\renewcommand{\O}{\mathcal{O}}
\renewcommand{\P}{\mathbf P}
\newcommand{\E}{\mathbf E}
\newcommand{\ami}{a_{\min}}
\newcommand{\ama}{a_{\max}}
\newtheorem{proposition}{Proposition}[section]
\newtheorem{theorem}[proposition]{Theorem}
\newtheorem{lemma}[proposition]{Lemma}
\theoremstyle{definition}
\newtheorem*{remark}{Remark}
\newtheorem*{remarks}{Remarks}
\numberwithin{equation}{section}
\author{B\'alint Vet\H o
\thanks{Department of Stochastics, Institute of Mathematics,
Budapest University of Technology and Economics, M\H uegyetem rkp.\ 3., H-1111 Budapest, Hungary. E-mail: {\tt vetob@math.bme.hu}.}
\thanks{MTA--BME Stochastics Research Group, M\H uegyetem rkp.\ 3., H-1111 Budapest, Hungary.}}
\title{Asymptotic fluctuations of geometric $q$-TASEP, geometric $q$-PushTASEP and $q$-PushASEP}
\date{}
\begin{document}

\maketitle
\begin{abstract}
We investigate the asymptotic fluctuation of three interacting particle systems: the geometric $q$-TASEP, the geometric $q$-PushTASEP and the $q$-PushASEP.
We prove that the rescaled particle position converges to the GUE Tracy--Widom distribution in the homogeneous case.
If the jump rates of the first finitely many particles are perturbed in the first two models,
we obtain that the limiting fluctuations are governed by the Baik--Ben Arous--P\'ech\'e distribution and that of the top eigenvalue of finite GUE matrices.

\end{abstract}

\section{Introduction}

The totally asymmetric simple exclusion process (TASEP) introduced in~\cite{Spi70} is the most well studied integrable model
which can be mapped into a surface growth model in the Kardar--Parisi--Zhang (KPZ) universality class.
The TASEP is an interacting particle system on the one-dimensional integer lattice $\Z$
where particles with vacant right neighbour jump to the right by one either according to independent Poisson clocks in continuous time
or independently with a certain probability in discrete time.
By using the underlying determinantal structure, the current fluctuations in TASEP are governed by the Airy processes~\cite{Jo03b,Sas05,BFPS06}.

The PushTASEP is a long-range version of TASEP introduced in~\cite{Lig80}
where the exclusion constraint preventing some particle jumps to happen is replaced by the pushing mechanism
which enforces an immediate jump of the particle at the target position further to the right if the target was occupied.
A two-dimensional stochastic particle system was introduced in~\cite{BF08}
whose two different one-dimensional (marginally Markovian) projections are TASEP and PushTASEP.

The $q$-TASEP is a one-parameter family of deformed TASEP models.
It was first introduced in~\cite{BC11} as a Markovian subsystem of the $q$-Whittaker 2d growth model
which is an interacting particle system in two space dimensions on Gelfand--Tsetlin patterns.
The parameter of $q$-TASEP is $q\in[0,1)$ and particles on $\Z$ jump to the right by one independently at rate $1-q^{\rm gap}$
where the gap is the number of consecutive vacant sites next to the particle on its right.
Due to the connection to the $q$-Whittaker process, a Fredholm determinant formula was obtained in~\cite{BCF12}
for the $q$-Laplace transform of the particle position in $q$-TASEP with step initial condition.
The asymptotic fluctuations of $q$-TASEP with step initial condition were shown to follow the GUE Tracy--Widom distribution in the homogeneous case~\cite{FV13}
using the formula in~\cite{BCF12} which confirms the KPZ universality and scaling theory conjectures~\cite{Spo12}.
A technical limitation of~\cite{FV13} was removed in~\cite{Bar14}
and in the presence of finitely many slower particles a Baik--Ben~Arous--P\'ech\'e transition is proved,
that is, the limiting fluctuations are BBP or the largest eigenvalue distribution of finite GUE matrices.


Two natural discrete time versions of $q$-TASEP were introduced in~\cite{BC15}, the geometric and Bernoulli discrete time $q$-TASEP
and Fredholm determinant expressions were proved for the $q$-Laplace transform of the particle positions in both systems.
These systems are expected to have the same scaling and asymptotic behaviour as $q$-TASEP, but no rigorous asymptotic analysis was performed so far.

The $q$-Whittaker 2d growth model has another Markovian projection which is the $q$-PushTASEP introduced and studied in~\cite{BP16}.
Its generalization with two-sided jumps is the $q$-PushASEP~\cite{CP15} which interpolates between $q$-TASEP and $q$-PushTASEP
and it is the $q$-deformed version of the PushASEP~\cite{BF07}.
Contour integral formulas for relevant observables are proved
and a Fredholm determinant formula for the $q$-Laplace transform of the particle position variable in $q$-PushASEP is conjectured in~\cite{CP15}.
The conjectural formula was proved in~\cite{MP17} in the framework of a new two-dimensional discrete time dynamics
along with Fredholm determinantal expressions for the Bernoulli and geometric $q$-PushTASEP.
In the special case of one-sided jumps the $q$-PushTASEP and geometric $q$-PushTASEP formulas of~\cite{MP17} are proved to the one given in~\cite{BCFV14}.

As an exactly solvable stochastic vertex model TASEP also has a three-parameter generalization to the q-Hahn TASEP~\cite{Pov13,C14,Vet15}.
The analogous extension of PushTASEP outside the Macdonald hierarchy is the $q$-Hahn PushTASEP and it was introduced in~\cite{CMP19}.
The pushing system of the $q$-Hahn PushTASEP was proved to have a Markov duality similar to the continuous time $q$-PushTASEP~\cite{CP15}.
The four-parameter family of stochastic higher spin vertex models~\cite{CP16} further generalizes the $q$-Hahn TASEP.
Another related model is the $q$-Hahn asymmetric exclusion process for which the discontinuity of the particle density
and Tracy--Widom fluctuations of particle positions involving the position of the first particle are known for a certain choice of parameters~\cite{BC16}.

We investigate the rigorous asymptotic analysis of the geometric $q$-TASEP, geometric $q$-PushTASEP and $q$-PushASEP in the present paper.
We prove GUE Tracy--Widom fluctuations for the particle position in the homogeneous case,
see the first statements in Theorems~\ref{thm:gmain}, \ref{thm:pmain} and \ref{thm:amain}.
If the jump rates of the first finitely many particles in the geometric $q$-TASEP and geometric $q$-PushTASEP are perturbed,
we obtain that the limiting fluctuations are Baik--Ben Arous--P\'ech\'e and the top eigenvalue distribution of finite GUE matrices
in Theorems~\ref{thm:pmain} and \ref{thm:amain}.

Our asymptotic analysis uses the following Fredholm determinant formulas for the $q$-Laplace transform of particle positions in the three interacting particle systems.
The finite time formula for geometric $q$-TASEP is available in~\cite{BC15}.
Theorem 3.3 in~\cite{BCFV14} has two applications in the present paper.
As a result of \cite{MP17} this theorem provides our starting formula for the geometric $q$-PushTASEP.
On the other hand by a different choice of specializations Theorem 3.3 in~\cite{BCFV14} gives a continuous time $q$-PushTASEP formula
which is generalized in~\cite{MP17} to the finite time formula for the $q$-PushASEP (where two-sided jumps are allowed) which we use.

The asymptotic results in this paper are formally similar to those in~\cite{FV13,Bar14,Vet15,BC16},
but their arguments are not directly applicable for the models studied in this paper.
The $q$-Hahn TASEP specializes to the geometric $q$-TASEP by setting the parameter $\nu=0$, but this case is not covered by the analysis in~\cite{Vet15}.
The main technical challenge in our models is the right choice of the integration contours and the proof of their steep descent property.
In particular compared to the analysis in~\cite{Vet15} it is not enough here to write the derivative of relevant functions along their steep descent contours
in terms of the function $h(b,s)$ given in~\eqref{defh} for the geometric $q$-PushTASEP and $q$-PushASEP.
Instead we work with a description using the function $e(b,s)$ given in~\eqref{defe}
which captures the behaviour of the derivatives along the contours around the points antipodal to the double critical point.
Furthermore we prove certain inequalities between various terms which appear in the derivative along the integration contours
by applying a new stochastic dominance argument which is formulated in Lemma~\ref{lemma:dominance}.
The technical conditions imposed in our theorems mostly come from the restriction that certain poles of the integrand have to be avoided
while deforming the original integration contours to the steep descent ones.

Based on the correspondence to certain random partitions under the half-space $q$-Whittaker measure
explicit $q$-Laplace transform formulas were obtained in~\cite{BBC20} for the particle position in the new interacting particle systems:
the geometric $q$-TASEP with activation and the geometric $q$-PushTASEP with particle creation.
These formulas are not Fredholm determinants however they resemble some Fredholm determinant expansions.
It turns out that the functions which govern the main contribution in the exponent of the integrals in these formulas
are exactly the same ones as in their full-space counterpart models which are studied in the present paper.
Hence the steep descent properties proved here about these functions along certain contours
heuristically imply the same limiting fluctuations also for the half-space models.
In order to make this argument rigorous, a dominated convergence argument is needed
which involves bounding the cross terms in a summable way along certain contours.
Proving this domination might use ideas from~\cite{D20}, but it does not seem to be straightforward.

The Baik--Ben Arous--P\'ech\'e limits proved in the present paper can possibly have a primary importance in showing convergence to the KPZ fixed point.
The KPZ fixed point is the expected space-time limit process for a wide family of models in the KPZ universality class.
It was constructed in~\cite{MQR17} and it was proved that the rescaled height function of TASEP converges to the KPZ fixed point.
It turned out that in certain models the question of convergence to the KPZ fixed point can be reduced to proving the convergence to the BBP distribution~\cite{Vir20}.
It is not obvious how the result of~\cite{Vir20} can be applied to the models of the present paper,
but the convergence to the BBP distribution in these models is certainly relevant for this reason as well.

Further parts of the paper are organized as follows.
We define geometric $q$-TASEP, geometric $q$-PushTASEP and $q$-PushASEP in Section~\ref{s:results}
and we state our main theorems about the asymptotic fluctuations in these models.
We prove the limit theorem for geometric $q$-TASEP in Section~\ref{s:g}, that for geometric $q$-PushTASEP in Section~\ref{s:p}
and the one for $q$-PushASEP in Section~\ref{s:a}.
The proofs of the steep descent properties of certain integration contours along with further technical statements are postponed to Section~\ref{s:steep}.

\paragraph{Acknowledgments.}
The author is grateful to Ivan Corwin and Leonid Petrov for discussions about explicit formulas for particle systems of this paper.
He thanks for valuable anonymous referee comments.
The work of the author was supported by the NKFI (National Research, Development and Innovation Office) grant FK123962,
by the Bolyai Research Scholarship of the Hungarian Academy of Sciences
and by the \'UNKP--21--5 New National Excellence Program of the Ministry for Innovation and Technology
from the source of the National Research, Development and Innovation Fund.

\section{Models and main results}
\label{s:results}

In order to introduce our models we define the $q$-Pochhammer symbol as
\begin{equation}\label{defqPochhammer}
(a;q)_n=\prod_{i=0}^{n-1}(1-aq^i),\qquad(a;q)_\infty=\prod_{i=0}^\infty(1-aq^i).
\end{equation}
Let $p_{m,\alpha}(j)$ denote the $q$-deformation of the truncated geometric distribution for $m=0,1,2,\dots$ and $\alpha\in(0,1)$, namely let
\begin{equation}\label{deftruncp}
p_{m,\alpha}(j)=\alpha^j(\alpha;q)_{m-j}\frac{(q;q)_m}{(q;q)_{m-j}(q;q)_j}\qquad\mbox{for}\quad j=0,1,\dots,m
\end{equation}
and let
\begin{equation}\label{deftruncpinfty}
p_{\infty,\alpha}(j)=\alpha^j(\alpha;q)_\infty\frac1{(q;q)_j}\qquad\mbox{for}\quad j=0,1,2,\dots.
\end{equation}
These non-negative weights define probability distributions, i.e.\ sum to $1$ by Lemma 1.3 of~\cite{BC15}.

\subsection{Geometric $q$-TASEP}

The discrete time geometric $q$-TASEP was introduced in~\cite{BC15} as follows.
The particles are ordered as
\begin{equation}\label{orderingg}
\infty=x_0^\g(t)>x_1^\g(t)>x_2^\g(t)>\dots>x_n^\g(t)>\dots
\end{equation}
for all $t=0,1,2,\dots$
and we define the gap between consecutive particles as $\gap_i^\g(t)=x_{i-1}^\g(t)-x_i^\g(t)-1$ with $\gap_1^\g(t)=\infty$ for all time $t$.
The initial configuration is the step initial condition, that is $x_i^\g(0)=-i$ for $i=1,2,\dots$.

The jump dynamics of the geometric $q$-TASEP is the following.
Let $a_1,a_2,\dots>0$ be particle rate parameters and let $\alpha_1,\alpha_2,\dots\in(0,1)$ be time dependent jump parameters
so that they satisfy the condition $a_i\alpha_j<1$ for all $i,j\ge1$.
Then the system of particles evolves from its state at time $t$ according to the parallel update rule
\begin{equation}\label{dynamicsg}
\P\left(x_i^\g(t+1)=x_i^\g(t)+j\,\big|\,\gap_i^\g(t)\right)=p_{\gap_i^\g(t),a_i\alpha_{t+1}}(j)
\end{equation}
where the steps of updates are independent for all $i$ and $t$.
Notice that by the definition of the truncated $q$-geometric distribution \eqref{deftruncp}, the ordering of particles \eqref{orderingg} is preserved by the dynamics \eqref{dynamicsg}.

The parameters in the geometric $q$-TASEP are chosen as follows.
We fix $a\in(0,1)$ and we consider the time-homogeneous model, that is, we set the time dependent jump rates $\alpha_1=\alpha_2=\dots=a$.
For a fixed $m$, the first $m$ particle rates $a_1,\dots,a_m\in(0,a^{-1})$ are arbitrary and the remaining ones are all equal to $a$, that is, $a_{m+1}=a_{m+2}=\dots=a$.
Let $\ami=\min\{a_1,\dots,a_m\}$ denote the lowest rate among the first $m$ particles.
In order to avoid further technicalities, we may assume that the first $m$ rates are at most $a$.
If some of these rates exceed $a$ then the corresponding particles either escape to infinity at a larger speed than the bulk
or they follow a slower particle in front of them.
Hence the limiting rescaled positions of particles at late times which we consider are ultimately influenced by the bulk rate $a$
and by the lowest rate $\ami$.
Choosing the $a_i$ and $\alpha_j$ parameters to be equal in the homogeneous case is natural
in the point of view of the corresponding half-space model, the geometric $q$-TASEP with activation of~\cite{BBC20} where there is only one family of parameters.

The large scale behaviour of the geometric $q$-TASEP is described below.
We define
\begin{equation}\label{defkappag}
\kappa_\g=\kappa_\g(\theta)=\frac{\Psi_q'(\theta-\log_qa)}{\Psi_q'(\theta+\log_qa)}
\end{equation}
for some $\theta\in(\log_qa,\infty)$ where $\Psi_q(z)$ is the $q$-digamma function.
The $q$-digamma function is given by $\Psi_q(z)=\frac{\partial}{\partial z}\ln\Gamma_q(z)$ where
\begin{equation}
\Gamma_q(z)=\frac{(q;q)_\infty}{(q^z;q)_\infty}(1-q)^{1-z}
\end{equation}
is the $q$-gamma function.
We consider the position of the $n$th particle after time $t=\kappa_\g n$.
We keep $\theta$ and hence $\kappa_\g$ fixed and we let $n\to\infty$.
The large scale behaviour of this particle position depends on $\ami$, namely the law of large numbers
\begin{equation}\label{LLNg}
\frac{x_n^\g(\kappa_\g n)}n\to\left\{\begin{array}{ll}f_\g &\mbox{if}\quad \theta\ge\log_q\ami,\\ g_\g &\mbox{if}\quad \theta\in(\log_qa,\log_q\ami)\end{array}\right.
\end{equation}
holds as $n\to\infty$ where
\begin{align}
f_\g&=\frac{\kappa_\g(\Psi_q(\theta+\log_qa)+\log(1-q))-(\Psi_q(\theta-\log_qa)+\log(1-q))}{\log q}-1,\label{deffg}\\
g_\g&=\frac{\kappa_\g(\Psi_q(\log_q\ami+\log_qa)+\log(1-q))-(\Psi_q(\log_q\ami-\log_qa)+\log(1-q))}{\log q}-1.\label{defgg}
\end{align}
The macroscopic shape of the particle positions is shown in Figure~\ref{fig:macro}.

\begin{figure}[t]
\centering
\includegraphics[width=200pt]{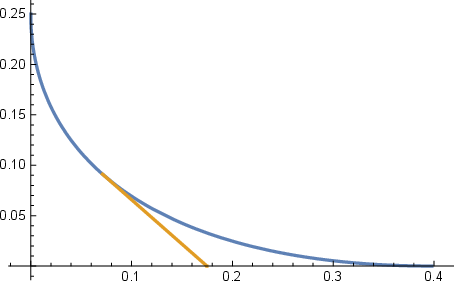}\qquad
\includegraphics[width=170pt]{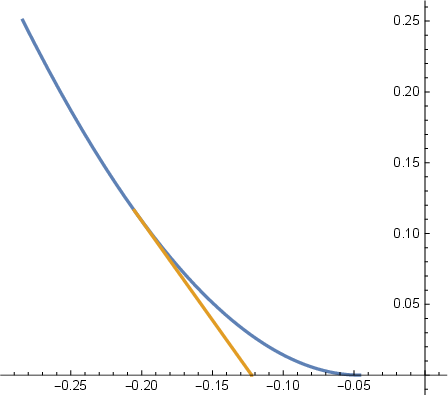}\\[4ex]
\includegraphics[width=230pt]{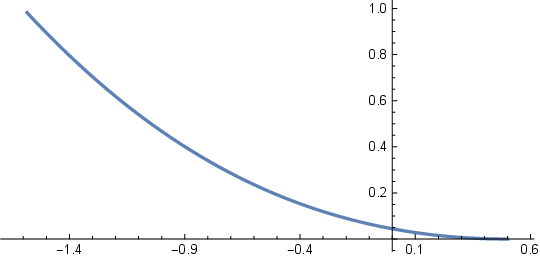}
\caption{The macroscopic shape of the particle positions in the three particle systems.
The vertical coordinate corresponds to particle label and the horizontal coordinate to the macroscopic position compared to the initial location.
The macroscopic position of the first particles is the intersection of the curves with the horizontal axis.
Top left: geometric $q$-TASEP, the parametric curve $((f_\g+1)/\kappa_\g,1/\kappa_\g)$ for $q=0.2,a=0.5$ in blue
(first particles at $(\Psi_q(2\log_qa)+\log(1-q))/\log q\simeq0.399$)
and the curve $((g_\g+1)/\kappa_\g,1/\kappa_\g)$ for $q=0.2,a=0.5,\ami=0.25$ in orange
(first particles at $(\Psi_q(\log_qa+\log_q\ami)+\log(1-q))/\log q\simeq0.175$).
Top right: geometric $q$-PushTASEP, the parametric curve $((-f_\p+1)/\kappa_\p,1/\kappa_\p)$ for $q=0.1,a=0.2$ in blue
(first particles at $-(\Psi_q(2\log_qa)+\log(1-q))/\log q\simeq-0.046$)
and the curve $((g_\p+1)/\kappa_\p,1/\kappa_\p)$ for $q=0.1,a=0.2,\ama=0.5$ in orange
(first particles at $-(\Psi_q(\log_qa+\log_q\ama)+\log(1-q))/\log q\simeq-0.122$).
Bottom: $q$-PushASEP, the parametric curve $((f_\a+1)/\kappa_\a,1/\kappa_\a)$ for $q=0.2,R=1,L=0.5$
(first particles at $R-L=0.5$).
\label{fig:macro}}
\end{figure}

The right end of the rarefaction fan, that is, the location of the first particles in the system
corresponds to the limit $\theta\to\log_qa$ when $\kappa_\g\to\infty$.
In this limit,
\begin{equation}\label{gfirstparticlespeed}
\lim_{\theta\to\log_q\!a}\frac{f_\g}{\kappa_\g}=\frac{\Psi_q(2\log_qa)+\log(1-q)}{\log q},\quad
\lim_{\theta\to\log_q\!a}\frac{g_\g}{\kappa_\g}=\frac{\Psi_q(\log_q\ami+\log_qa)+\log(1-q)}{\log q}.
\end{equation}
The first limit above is the speed of the first particle in the case when all particles have the same rate parameter $a$.
The second limit in \eqref{gfirstparticlespeed} is the asymptotic speed of the first particle with rate $\ami$.
These speeds can also be verified by computing the expected value of one jump of the first particle \eqref{deftruncpinfty} and with the $q$-binomial theorem
\begin{equation}
\sum_{k=0}^\infty z^k\frac{(b;q)_k}{(q;q)_k}=\frac{(bz;q)_\infty}{(z;q)_\infty}.
\end{equation}

We do not prove the law of large numbers \eqref{LLNg} directly since they follow from the main results below.
We consider the rescaled particle positions
\begin{equation}\label{defxietag}
\xi_n^\g=\frac{x_n^\g(\kappa_\g n)-f_\g n}{\chi_\g^{1/3}(\log q)^{-1}n^{1/3}},\qquad
\eta_n^\g=\frac{x_n^\g(\kappa_\g n)-g_\g n}{\sigma_\g^{1/2}(\log q)^{-1}n^{1/2}}
\end{equation}
where
\begin{equation}\label{defchisigmag}\begin{aligned}
\chi_\g&=\frac12\left(\kappa_\g\Psi_q''(\theta+\log_qa)-\Psi_q''(\theta-\log_qa)\right),\\
\sigma_\g&=\kappa_g\Psi_q'(\log_q\ami+\log_qa)-\Psi_q'(\log_q\ami-\log_qa)
\end{aligned}\end{equation}
are positive coefficients by Lemma~\ref{lemma:psimonotonicity}.

In accordance with the limit in the law of large numbers \eqref{LLNg}, there are three natural regimes depending on the relation of $\theta$ compared to $\log_q\ami$ which we address in the three parts of Theorem~\ref{thm:gmain}.
If $\theta>\log_q\ami$, then the rescaled position $\xi_n^\g$ converges to the Tracy--Widom distribution.
In the critical case $\theta=\log_q\ami+\O(n^{-1/3})$, the Baik--Ben Arous--P\'ech\'e distribution is the limit of $\xi_n^\g$.
If $\theta\in(\log_qa,\log_q\ami)$, then $\eta_n^\g$ converges to the top eigenvalue distribution of a finite GUE random matrix.
Note also that the scaling of $\xi_n^\g$ is $n^{1/3}$ whereas that of $\eta_n^\g$ is $n^{1/2}$.

\begin{theorem}\label{thm:gmain}
Let $q\in(0,1)$ be fixed and choose
\begin{equation}\label{gainterval}
a^2\in\left[q,1-\frac q{(1-q)^2}\right].
\end{equation}
Let $\theta$ be such that
\begin{equation}\label{gthetacond}
0<\theta-\log_qa<\log_q\left(\frac{2q}{1+q}\right).
\end{equation}
Then the following limits hold for the rescaled particle positions in the geometric $q$-TASEP.
\begin{enumerate}
\item
If $\theta>\log_q\ami$, then
\begin{equation}
\P(\xi_n^\g<x)\to F_{\rm GUE}(x)
\end{equation}
as $n\to\infty$ for all $x\in\R$ where $F_{\rm GUE}$ is the GUE Tracy--Widom distribution function, see~\cite{TW94}.

\item
If the particle jump parameters of the first $m$ particles depend on $n$ as
\begin{equation}\label{gBBPai}
a_i=q^{\theta+b_i\chi_\g^{-1/3}n^{-1/3}}
\end{equation}
for $i=1,\dots,m$, then
\begin{equation}\label{gBBP}
\P(\xi_n^\g<x)\to F_{\rm BBP,\bf b}(x)
\end{equation}
as $n\to\infty$ for all $x\in\R$ where $F_{\rm BBP,\bf b}$ is the Baik--Ben Arous--P\'ech\'e distribution function with parameter ${\bf b}=(b_1,\dots,b_m)$,
see~\cite{BBP06}.

\item
Let $q,a\in(0,1)$ be chosen so that \eqref{gainterval} holds.
Suppose that $\ami$ is such that the condition \eqref{gthetacond} with $\theta$ replaced by $\log_q\ami$, that is,
\begin{equation}\label{galphacond}
\frac{2q}{1+q}<\frac\ami a<1
\end{equation}
is satisfied.
Let $k$ denote the multiplicity of $\ami$ in the set $\{a_1,a_2,\dots\}$.
Then there is an $\varepsilon>0$ so that for any $\theta\in(\log_q\ami-\varepsilon,\log_q\ami)$
\begin{equation}\label{gGaussian}
\P(\eta_n^\g<x)\to G_k(x)
\end{equation}
as $n\to\infty$ for all $x\in\R$ where $G_k$ is the distribution of the largest eigenvalue of a $k\times k$ GUE random matrix.
\end{enumerate}
\end{theorem}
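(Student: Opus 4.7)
The plan is to start from the Fredholm determinant formula of~\cite{BC15} for the $q$-Laplace transform
\[
\E\!\left[\frac{1}{(\zeta q^{x_n^\g(\kappa_\g n)+n};q)_\infty}\right] = \det(I+K_\zeta)_{L^2(\C)},
\]
where the kernel $K_\zeta(w,w')$ is a contour integral whose integrand factorises as $\exp\bigl(n(S(w)-S(z))\bigr)$ times a finite number of algebraic factors arising from the perturbed rates $a_1,\dots,a_m$ and from the $q$-Pochhammer prefactor. The bulk action has the schematic form
\[
S(w) = \kappa_\g\log(aw;q)_\infty - \log(w;q)_\infty - (f_\g+1)\log w,
\]
and the definitions of $\kappa_\g$, $f_\g$, $\chi_\g$ in \eqref{defkappag}, \eqref{deffg}, \eqref{defchisigmag} are tuned so that $S$ has a double critical point at $w_c=q^\theta$, with $S'''(w_c)$ proportional to $\chi_\g$. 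Taking $\zeta=-q^{-(f_\g+1)n-\chi_\g^{1/3}(\log q)^{-1}n^{1/3}x}$ makes the left-hand side converge pointwise to $\P(\xi_n^\g<x)$, via the standard fact that $1/(-q^r;q)_\infty$ tends to $0$ as $r\to-\infty$ and to $1$ as $r\to+\infty$, and reduces the theorem to a saddle point analysis of the Fredholm determinant around $w_c$.

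The three parts of the theorem follow from three variants of this saddle point analysis. For part~(1), I would deform the $z$- and $w$-contours to steep descent/ascent paths through $w_c$ on which $\Re S$ attains its unique maximum, respectively minimum, at $w_c$. The local change of variables $w=w_c\exp\bigl(\chi_\g^{1/3}(\log q)^{-1}n^{-1/3}\tilde w\bigr)$ and likewise for $z$ converts the kernel to the Airy kernel in the limit and yields $F_{\mathrm{GUE}}(x)$. For part~(2), the perturbation $a_i=q^{\theta+b_in^{-1/3}}$ introduces $m$ additional finite algebraic factors depending on $a_iw$ and $a_iz$ in the kernel. Under the $n^{-1/3}$ rescaling near $w_c=q^\theta$, these converge to exactly the finite rank factors that deform the Airy kernel into the Baik--Ben Arous--P\'ech\'e kernel with parameter $\mathbf{b}$, yielding $F_{\mathrm{BBP},\mathbf{b}}(x)$. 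For part~(3), the saddle point $w_c$ no longer dominates the asymptotics. Instead, the deformation from the original contour to a steep descent contour through $w_c$ has to cross $k$ simple poles of the integrand coming from the $k$ particles of rate $\alpha$, and the corresponding residue contribution is an explicit $k$-fold determinantal integral whose asymptotic shape, after the $n^{-1/2}$ rescaling encoded by $\sigma_\g$ in \eqref{defchisigmag}, is the $k\times k$ GUE distribution $G_k$.

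The main obstacle is the steep descent analysis, which is deferred to Section~\ref{s:steep}. The contours have to simultaneously be steep descent through $w_c$, avoid crossing unwanted poles of the integrand during the deformation from the original small circles around the singularities, and produce exactly the right set of residues in the Gaussian regime. The parameter windows \eqref{gainterval} and \eqref{gthetacond} are the quantitative ranges of $(a,\theta)$ within which explicit contours with these properties can be constructed; verifying the steep descent property requires more delicate arguments than in~\cite{Vet15}, since the standard circular or vertical contour choices fail to be steep descent throughout the admissible range. A secondary technical issue is the uniform tail decay estimate needed to lift pointwise convergence of the Fredholm kernels to convergence of the Fredholm determinants, which I would obtain from the monotonicity of $\Re S$ along the contours away from $w_c$ combined with the rapid decay of the $q$-Pochhammer factors at the endpoints.
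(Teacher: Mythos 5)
The overall framework you propose --- starting from the $q$-Laplace transform Fredholm determinant of~\cite{BC15}, tuning the scalings in \eqref{defkappag}--\eqref{defchisigmag} so that the exponent has a double critical point, localizing to steep descent contours, and Taylor expanding --- matches the paper's proof exactly for Parts~(1) and~(2). The BBP factor also arises the same way in both: the $m$ perturbed $q$-Pochhammer factors localize under the $n^{-1/3}$ rescaling near $q^\theta$ to $\prod_i(z-b_i)/(w-b_i)$, as in the paper's passage~\eqref{Phioneterm}--\eqref{Phiratio}.

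Your Part~(3), however, takes a genuinely different route from the paper, and your description of that route has a technical inaccuracy. You propose keeping the saddle point at $\theta$ and extracting the residues crossed when deforming the original $W$-contour to a steep descent contour through $q^\theta$, arguing that the extracted $k$-fold residue contribution converges to $G_k$. Two comments. First, when $k$ of the $a_i$ equal $\alpha$, the factor $\prod_{i=1}^m(q^W/a_i;q)_\infty$ in the denominator of the kernel has a $k$-fold zero at $W=\log_q\alpha$, so you cross a single pole of order $k$, not ``$k$ simple poles''; a residue argument would have to cope with the higher-order derivative, or first perturb the $a_i$ apart and take a limit. Second, and more importantly, the paper does \emph{not} extract residues. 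It instead re-centers the entire analysis: the choice \eqref{zchoiceg2} replaces $f_\g$ by $g_\g$ and the exponent $f_0^\g$ by $g_0^\g$ in \eqref{defg0g}, which has its critical point shifted to $\log_q\alpha$ with a \emph{quadratic} (not cubic) Taylor expansion \eqref{g0Taylorg}. The $W$-contour $\C_\theta^{\log_q\alpha,\delta}$ in \eqref{defCdelta} still encircles the pole at $\log_q\alpha$, but is locally modified on the $n^{-1/2}$ scale to avoid it, and the $Z$-contour is $\D_{\log_q\alpha+n^{-1/2}}$. Under the $n^{-1/2}$ rescaling, the $\Phi^\g(W)/\Phi^\g(Z)$ factor localizes to $(z/w)^k$ as in \eqref{PhiGaussian}, so the kernel converges directly to $H_k$ in \eqref{defHk}, keeping the Fredholm determinant structure intact throughout. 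Both routes can in principle reach $G_k$, but the paper's re-centered approach (following Subsection 2.3 of~\cite{Bar14}) avoids manipulating high-order residues and requires only the additional steep descent estimate of Proposition~\ref{prop:gsteep2} on the composite contour, which is where the condition~\eqref{galphacond} comes in. Your sketch does not indicate how you would prove the required steep descent along whatever contour you deform to, nor how you would control the remainder of the Fredholm determinant after residue extraction, which is the substantive work in either approach.
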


The Baik--Ben Arous--P\'ech\'e distribution was first introduced in~\cite{BBP06} and its distribution function is given by the Fredholm determinant expression $F_{\rm BBP,\bf b}(x)=\det(\id-K_{\rm BBP,\bf b})_{L^2((x,\infty))}$ with the kernel
\begin{equation}\label{defKBBP}
K_{\rm BBP,\bf b}(u,v)=\frac1{(2\pi i)^2}\int_{e^{-2\pi i/3}\infty}^{e^{2\pi i/3}\infty}\d w
\int_{e^{-\pi i/3}\infty}^{e^{\pi i/3}\infty}\d z
\frac{e^{z^3/3-zv}}{e^{w^3/3-wu}}\frac1{z-w}\prod_{i=1}^m\frac{z-b_i}{w-b_i}
\end{equation}
where the contour for $w$ crosses the real line to the right of all the $b_i$s and the contours for $w$ and for $z$ do not intersect.
The distribution $G_k$ has a Fredholm determinant expression as $G_k(x)=\det(\id-H_k)_{L^2((x,\infty))}$ with the kernel
\begin{equation}\label{defHk}
H_k(u,v)=\frac1{(2\pi i)^2}\int_{e^{-5\pi i/6}\infty}^{e^{5\pi i/6}\infty}\d w
\int_{e^{-\pi i/3}\infty}^{e^{\pi i/3}\infty}\d z\frac{e^{z^2/2-zv}}{e^{w^2/2-wu}}\frac1{z-w}\left(\frac zw\right)^k
\end{equation}
where the integration contours cross the real axis to the right of $0$ and they do not intersect each other, see e.g.~\cite{BH97,BK05}.

\begin{remarks}
\begin{enumerate}
\item
The conditions \eqref{gainterval} and \eqref{gthetacond} are technical.
The upper bound in \eqref{gthetacond} is the analogue of (2.16) in~\cite{Vet15} and it comes from the fact
that the poles of the sine function in the denominator of the kernel $K^\g_z$ in \eqref{defKgz} have to be avoided
by the integration contour for which the steep descent property is verified.

\item
By the presence of the $q$-Pochhammer symbols $(w/a_i;q)_\infty$ in the denominator of the kernel $K^\g_z$ in \eqref{defKgz},
there are poles at $w=a_iq^{-k}$ for $k=1,2,\dots$ which have to remain outside of the integration contour.
Since $a_i\in[\ami,a]\subseteq[q^\theta,a]$ for $i=1,\dots,m$, these poles are outside of $\C_\theta$ if $2a-q^\theta<q^{\theta-1}$
which is equivalent to \eqref{gthetacond}.

\item
The origin of \eqref{gainterval} is less intuitive, but it is needed for our method to work
in the proof of Proposition~\ref{prop:gsteep} for the steep descent property of the integration contours.
The interval for $a$ in \eqref{gainterval} is non-empty only if $0<q\le q^*\simeq0.318$
where $q^*$ is the unique solution of the equation $q=1-q/(1-q)^2$ within $(0,1)$.
With these possible values of $q$, the upper bound in \eqref{gthetacond} is between $\log_{q^*}(2q^*/(1+q^*))\simeq0.636$ and $1$.

\item
The $b_i=0$ for $i=1,\dots,m$ case in the second part of Theorem~\ref{thm:gmain} corresponds to choosing exactly $\theta=\log_q\ami$.
The $b_i\to-\infty$ limit means that we set the $i$th particle quicker than what could influence the limiting behaviour,
hence we get the Tracy--Widom fluctuations as in the first part of Theorem~\ref{thm:gmain}.
The convergence of the BBP distribution to the GUE Tracy--Widom distribution can also be seen directly from their kernels.
The extra factors $(z-b_i)/(w-b_i)\to1$ for all $w$ and $z$ pointwise as $b_i\to-\infty$.
By writing $(z-b_i)/(w-b_i)=(1-z/b_i)/(1-w/b_i)$ one can upper bound the integrand of the kernel in a way that
the Gaussian decay in the variables $w$ and $z$ along their respective contours and the Airy type decay $Ce^{-2/3(u^{3/2}+v^{3/2})}$ in $u$ and $v$
are enough to argue by dominated convergence.
This verifies the interchange of the limits $n\to\infty$ and $b_i\to-\infty$.

\item
Similarly to the case of $q$-TASEP~\cite{FV13} and to that of the $q$-Hahn TASEP~\cite{Vet15},
one can vary the time parameter in the definition of the rescaled particle position $\xi_n^\g$ on the scale $n^{2/3}$,
that is, one can consider the particle position at time $\kappa_\g n+cn^{2/3}$ for a new parameter $c\in\R$.
The Tracy--Widom limit remains unchanged as stated in Theorem~\ref{thm:gmain} after modifying the macroscopic position $f_\g n$ of the $n$th particle on the $n^{2/3}$ scale.
The effect on the Baik--Ben Arous--P\'ech\'e limit is that the coordinates $b_i$ of the parameter vector of the distribution on the right-hand side of \eqref{gBBP}
are replaced by $b_i-c\phi$ with some explicit $\phi>0$.
One expects by universality that in the homogeneous case the particles are non-trivially correlated on the scale $n^{2/3}$ with a limit being the Airy$_2$ process in the variable $c$.

\end{enumerate}
\end{remarks}

\subsection{Geometric $q$-PushTASEP}

The discrete time geometric $q$-PushTASEP first appears in~\cite{MP17}.
It is related to the $q$-PushTASEP~\cite{BP16} and the $q$-PushASEP~\cite{CP15} which were already introduced earlier as continuous time particle systems
that can be obtained as the limit of the discrete time geometric $q$-PushTASEP when the jump rates are let to $0$ and time is accelerated.

For any time $t=0,1,2,\dots$, the particles in the geometric $q$-PushTASEP are located at
\begin{equation}
0=x_0^\p(t)>x_1^\p(t)>x_2^\p(t)>\dots>x_n^\p(t)>\dots
\end{equation}
and their configuration evolves in discrete time steps.
We define $\gap_i^\p(t)=x_{i-1}^\p(t)-x_i^\p(t)-1$ for $i=1,2,\dots$ and
we assume that at time $t=0$, the system starts from the step initial condition $x_i(0)=-i$ for all $i=1,2,\dots$.

The dynamics of the geometric $q$-PushTASEP runs as follows.
Let $a_1,a_2,\dots>0$ be particle specific rate parameters and let $\alpha_1,\alpha_2,\dots$ be time dependent jump parameters
such that $a_i\alpha_j<1$ for all $i,j\ge1$.
Then positions of particles $x_n^\p(t)$ from time $t$ to $t+1$ are sequentially updated for $n=1,2,\dots$ as follows.
The particle $x_n^\p(t)$ jumps to the left to the new location
\begin{equation}\label{dynamicsp}
x_n^\p(t+1)=x_n^\p(t)-V_n(t)-W_n(t)
\end{equation}
where $\P(V_n(t)=k)=p_{\infty,a_n\alpha_t}(k)$ defined in \eqref{deftruncpinfty} and
\begin{equation}\label{Wdistr}
\P(W_n(t)=k)=(q^{\gap_n^\p(t)})^k(q^{\gap_n^\p(t)};q^{-1})_\infty
\frac{(q^{-1};q^{-1})_{c_{n-1}(t)}}{(q^{-1};q^{-1})_k(q^{-1};q^{-1})_{c_{n-1}(t)-k}}
\end{equation}
with $c_{n-1}(t)=x_{n-1}^\p(t)-x_{n-1}^\p(t+1)$ being the absolute size of the jump of particle $n-1$ in the same step from time $t$ to time $t+1$.
Note that the distribution of $W_n(t)$ is concentrated on the integers that are at least $c_{n-1}(t)$, hence the order of particles is preserved by the dynamics.
We remark that the probability on the right-hand side of \eqref{Wdistr} can formally be written as $p_{c_{n-1}(t),q^{\gap_n^\c(t)}}(k)$
defined in \eqref{deftruncp} with $q$ replaced by $q^{-1}$.

The relevant choice of the parameters for the geometric $q$-PushTASEP is the following.
We consider the time-homogeneous model, that is, we let $\alpha_1=\alpha_2=\dots=a$ for some fixed $a\in(0,1)$.
We let the first $m$ particle rates $a_1,\dots,a_m\in(0,a^{-1})$ to be arbitrary and $a_{m+1}=a_{m+2}=\dots=a$.
In the homogeneous case the rescaled particle positions have Tracy--Widom fluctuations,
but the Baik--Ben Arous--P\'ech\'e distribution and the top eigenvalue distribution of finite GUE matrices appear
if the first few particles have larger jump rate parameters which push further particles stronger to the right.
Therefore we assume that the first $m$ rates are at least $a$ and we denote by $\ama=\max\{a_1,\dots,a_m\}$ the highest rate.
This choice of parameters can also be followed for the geometric $q$-PushTASEP with particle creation in~\cite{BBC20}
which is the half-space analogue of geometric $q$-PushTASEP.

We define
\begin{equation}\label{defkappap}
\kappa_\p=\kappa_\p(\theta)=\frac{\Psi_q'(\log_qa-\theta)}{\Psi_q'(\log_qa+\theta)}
\end{equation}
for all $\theta\in(-\log_qa,\log_qa)$.
Then the law of large numbers
\begin{equation}\label{LLNp}
\frac{x_n^\p(\kappa_\p n)}n\to\left\{\begin{array}{ll}-f_\p &\mbox{if}\quad \theta\in(-\log_qa,\log_q\ama]\\
-g_\p &\mbox{if}\quad \theta\in(\log_q\ama,\log_qa)\end{array}\right.
\end{equation}
holds as $n\to\infty$ where
\begin{align}
f_\p&=\frac{\kappa_\p(\Psi_q(\log_qa+\theta)+\log(1-q))+\Psi_q(\log_qa-\theta)+\log(1-q)}{\log q}+1,\label{deffp}\\
g_\p&=\frac{\kappa_\p(\Psi_q(\log_qa+\log_q\ama)+\log(1-q))+\Psi_q(\log_qa-\log_q\ama)+\log(1-q)}{\log q}+1.\label{defgp}
\end{align}
See Figure~\ref{fig:macro} for the curve of the macroscopic particle positions in $q$-PushTASEP.

Next we consider the rescaled particle positions
\begin{equation}\label{defxietap}
\xi_n^\p=\frac{x_n^\p(\kappa_\p n)+f_\p n}{\chi_\p^{1/3}(\log q)^{-1}n^{1/3}},\qquad
\eta_n^\p=\frac{x_n^\p(\kappa_\p n)+g_\p n}{\sigma_\p^{1/2}(\log q)^{-1}n^{1/2}}
\end{equation}
where
\begin{equation}\label{defchisigmap}\begin{aligned}
\chi_\p&=-\frac12\left(\kappa_\p\Psi_q''(\log_qa+\theta)+\Psi_q''(\log_qa-\theta)\right),\\
\sigma_\p&=\kappa_\p\Psi_q'(\log_qa+\log_q\ama)-\Psi_q'(\log_qa-\log_q\ama)
\end{aligned}\end{equation}
are positive coefficients by Lemma~\ref{lemma:psimonotonicity}.

\begin{theorem}\label{thm:pmain}
Let $q,a\in(0,1)$ be fixed.
Let $\theta\in(-\log_qa,\log_qa)$ be such that
\begin{equation}\label{pcondpole}
\theta>\log_q\left(\frac{2a}{1+q}\right)
\end{equation}
and
\begin{equation}\label{pcondtech}
\frac{2q\left(a^2(1-q)^2+(q^\theta-a)^2\right)(2a-q^\theta)^2}{(2a-q^\theta-aq)^4}\le f_\p
\end{equation}
hold.

\begin{enumerate}
\item
If $\theta<\log_q\ama$, then for all $x\in\R$
\begin{equation}
\lim_{n\to\infty}\P(\xi_n^\p<x)=F_{\rm GUE}(x).
\end{equation}

\item
If the particle jump parameters depend of the first $m$ particles depend on $n$ as
\begin{equation}\label{pBBPai}
a_i=q^{\theta+b_i\chi_\p^{-1/3}n^{-1/3}}
\end{equation}
for $i=1,\dots,m$, then for all $x\in\R$
\begin{equation}\label{pBBP}
\lim_{n\to\infty}\P(\xi_n^\p<x)=F_{\rm BBP,\bf b}(x).
\end{equation}

\item
Let $q,a,\ama\in(0,1)$ be chosen so that the condition \eqref{pcondpole} with $\theta$ replaced by $\log_q\ama$,
that is,
\begin{equation}\label{palphacond}
\frac{\ama}a>\frac2{1+q}
\end{equation}
is satisfied.
Let $k$ denote the multiplicity of $\ama$ in the set $\{a_1,a_2,\dots\}$.
Then there is an $\varepsilon>0$ so that for any $\theta\in(\log_q\ama,\log_q\ama+\varepsilon)$ and for all $x\in\R$
\begin{equation}\label{pGaussian}
\lim_{n\to\infty}\P(\eta_n^\p<x)=G_k(x).
\end{equation}
\end{enumerate}
\end{theorem}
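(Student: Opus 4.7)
The plan is to base the analysis on the Fredholm determinant formula for the $q$-Laplace transform $\E[1/(\zeta q^{x_n^\p(\kappa_\p n)+n};q)_\infty]$ of the particle position in geometric $q$-PushTASEP proved in \cite{MP17,BCFV14}, and to perform a steep descent / saddle point analysis in the spirit of \cite{FV13,Bar14,Vet15}. The Fredholm determinant can be written on $L^2$ of a small contour around $0$ with a kernel given by a double contour integral whose integrand is essentially $e^{n(f_0^\p(w)-f_0^\p(z))}$ times a $\pi/\sin(\pi(z-w))$ factor and finite products $\prod_{i=1}^m(z/a_i;q)_\infty/(w/a_i;q)_\infty$ coming from the particle-dependent jump rates. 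The constants $\kappa_\p$ and $f_\p$ in \eqref{defkappap} and \eqref{deffp} are chosen precisely so that $w_0=q^\theta$ is a double critical point of $f_0^\p$, i.e.\ $(f_0^\p)'(w_0)=(f_0^\p)''(w_0)=0$, and $\chi_\p$ in \eqref{defchisigmap} is proportional to the non-vanishing third derivative $(f_0^\p)'''(w_0)$; analogously, $g_\p$ and $\sigma_\p$ correspond to a simple critical point at $\wt\alpha$, which is the relevant saddle in part~(3).

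For part~(1), I would deform the $w$ and $z$ contours to steep descent paths $\C_\theta$ and $\D_\theta$ through $q^\theta$ with opening angles $\pm 2\pi/3$ and $\pm\pi/3$ respectively, kept far enough apart that the sine factor remains bounded. The essential input, deferred to Section~\ref{s:steep}, is strict monotonicity of $\Re f_0^\p$ along these contours away from $w_0$; this is where the conditions \eqref{pcondpole} and \eqref{pcondtech} enter. Condition \eqref{pcondpole} ensures that the poles $w=a_iq^{-k}$ ($k\ge 1$) produced by the $q$-Pochhammer factors $(w/a_i;q)_\infty^{-1}$ remain outside $\C_\theta$ when the first $m$ rates are perturbed as in part~(2), while \eqref{pcondtech} is the quantitative bound produced by our monotonicity argument. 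Rescaling $w=q^\theta\exp(\wh w/((\chi_\p n)^{1/3}\log q))$ and $z=q^\theta\exp(\wh z/((\chi_\p n)^{1/3}\log q))$ then gives $n(f_0^\p(w)-f_0^\p(z))\to \wh w^3/3-\wh z^3/3$ from the third-order Taylor expansion, the sine prefactor scales to $1/(\wh z-\wh w)$, and dominated convergence via the steep descent bound yields the Airy kernel, hence $F_{\rm GUE}$ for $\xi_n^\p$ through the standard argument converting $q$-Laplace transform convergence into distributional convergence. For part~(2), the scaling $a_i=q^{\theta+b_in^{-1/3}}$ sends the finite Pochhammer product to $\prod_{i=1}^m(\wh z-b_i)/(\wh w-b_i)$ in the rescaled variables, producing the BBP kernel \eqref{defKBBP}. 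For part~(3), when $\theta\in(\log_q\wt\alpha,\log_q\wt\alpha+\varepsilon)$, deforming $\C_\theta$ so as to cross the $k$ coincident poles at $w=\wt\alpha$ extracts a $k$-fold residue contribution on the $n^{1/2}$ scale from \eqref{defchisigmap} that assembles into the kernel \eqref{defHk}, while the remaining double integral decays by a Gaussian steep descent.

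The main obstacle, as acknowledged in the introduction, is the steep descent verification itself: compared to $q$-TASEP~\cite{FV13} or $q$-Hahn TASEP~\cite{Vet15}, the exponent $f_0^\p$ here carries an extra $\Psi_q$-type contribution from the pushing mechanism (reflecting the $V_n+W_n$ structure of the jump \eqref{dynamicsp}), and the convexity-based monotonicity arguments used in those works do not transfer directly. Establishing monotonicity of $\Re f_0^\p$ along $\C_\theta$ and $\D_\theta$ while simultaneously keeping the accumulating poles $a_iq^{-k}$ outside $\C_\theta$ is precisely what forces the technical condition \eqref{pcondtech}. A secondary difficulty is producing uniform-in-$n$ dominations for the Fredholm series expansion: the $q$-Pochhammer singularities on the real axis and the slow decay of the integrand away from the critical point must be controlled globally along the contours, requiring a careful splitting of $\C_\theta$ into a neighbourhood of $w_0$ and its tails. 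I expect the overall structure of the proof to parallel \cite{Vet15}, with the new $f_0^\p$-specific estimates carrying out the real work.
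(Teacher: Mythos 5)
Your high-level plan (pass to the Fredholm determinant formula, deform to steep descent contours through the critical point, rescale on $n^{1/3}$ or $n^{1/2}$ to get Airy, BBP or Gaussian kernels, and convert $q$-Laplace convergence to distributional convergence) is the right skeleton and essentially matches the paper. But several of the model-specific features that you would need to get right are off or missing, and at least one step goes a different route than the paper.

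First, you misread the kernel structure. In Theorem~\ref{thm:pfinite} the function is $h(w)=\prod_{i=1}^n(a_iw;q)_\infty/\prod_{j=1}^t(\alpha_j/w;q)_\infty$ and the contour encircles $a_1^{-1},\dots,a_n^{-1}$, not $a_1,\dots,a_n$; the particle-rate Pochhammer factors are $(a_iw;q)_\infty$, not $(w/a_i;q)_\infty$, with singularities at $w=a_i^{-1}q^{-k}$. This reciprocal structure is why the paper first substitutes $w=q^{-W}$ (not $q^W$), which in turn is responsible for the sign flip $+\chi_\p/3$ in the cubic Taylor term \eqref{f0pTaylor} and the swapped roles of the short $W$ and $Z$ contours ($V^\delta_{\theta,\varphi}$ vs.\ $V^\delta_{\theta,\pi-\varphi}$) compared to geometric $q$-TASEP. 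Your change of variables and contour angles are written as if the orientation were the same as in $q$-TASEP; the deformation would fail there. Relatedly, \eqref{pcondpole} is primarily there to keep the poles of $\sin(\pi(W-Z))$ away from the pair of deformed contours (and secondarily the Pochhammer poles at $\log_qa_i+1,\log_qa_i+2,\dots$, and the inequality $b_1\ge -1$ used in the steep descent argument), not the poles you describe.

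Second, you correctly flag that the steep descent proof needs new estimates beyond $q$-TASEP / $q$-Hahn TASEP, but you attribute it to an extra $\Psi_q$ contribution rather than to the actual technical obstacle. The contour derivative $\Re\,\tfrac{\d}{\d s}f_0^\p$ along $\wt\C_\theta$ decomposes again into the functions $g(b,s)$, but the signs of the arguments $b$ are reversed relative to geometric $q$-TASEP, so the monotonicity comparison through $h(b,s)$ alone (Lemma~\ref{lemma:hcompare}) is no longer sufficient. The paper introduces the auxiliary function $e(b,s)$ (Lemma~\ref{lemma:eproperties}) and the identity \eqref{hwithe}, and the residual inequality \eqref{pcondtech2} after cancellation of the main $\sin s$ terms is exactly where your hypothesis \eqref{pcondtech} is used. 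Saying ``\eqref{pcondtech} is the quantitative bound produced by our monotonicity argument'' correctly places it but does not engage with what actually makes the derivative estimate close.

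Third, for part~(3) you propose deforming $\C_\theta$ across the $k$ coincident poles and extracting residues that ``assemble into the kernel \eqref{defHk}.'' That is a legitimate route (it is essentially what is done in~\cite{Bar14} for $q$-TASEP), but it is not the paper's. The paper keeps the poles inside the contour $\wt\C_\theta^{\,\log_q\wt\alpha,\delta}$ with a local $n^{-1/2}$-scale V-shaped modification near $\log_q\wt\alpha$, shifts the $Z$ contour to $\D_{\log_q\wt\alpha-n^{-1/2}}$, and obtains the factor $(z/w)^k$ directly from the scaling limit of the Pochhammer ratio $\Phi^\p(W)/\Phi^\p(Z)$ as in \eqref{PhiGaussian}, with the Gaussian quadratic exponent coming from $\sigma_\p>0$ in \eqref{g0Taylorp}. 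If you do pursue residue extraction you would still need the same steep descent bounds and a careful bookkeeping to repackage the finite-rank perturbation as $G_k$, so it is not the shorter path here.
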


\begin{remark}
The condition \eqref{pcondpole} is essential for the steep descent contour to avoid the poles coming from the sine function in the denominator of the integrand.
We believe however that the second condition \eqref{pcondtech} is technical which can be checked numerically for any given choice of parameters.
Furthermore for any $q,a\in(0,1)$ fixed there is a $\theta^*<\log_qa$ such that \eqref{pcondtech} holds for all $\theta\in(\theta^*,\log_qa)$.
This is because for fixed $q,a\in(0,1)$ the left-hand side of \eqref{pcondtech} converges to $2q/(1-q)^2$ as $\theta\to\log_qa$
whereas $f_\p\to\infty$ as a function of $\theta$.
\end{remark}

\subsection{$q$-PushASEP}

The $q$-PushASEP is a continuous time particle system which was introduced in~\cite{CP15} as the $q$-deformation of the PushASEP of~\cite{BF07}.
We define the homogeneous model where the particle dependent jump rates are all chosen to be equal to one.
For any time $t=\ge0$, the particle positions are ordered
\begin{equation}
\infty=x_0^\a(t)>x_1^\a(t)>x_2^\a(t)>\dots>x_n^\a(t)>\dots
\end{equation}
and they evolve in continuous time.
We define $\gap_i^\a(t)=x_{i-1}^\a(t)-x_i^\a(t)-1$ for $i=1,2,\dots$ and
we assume that at time $t=0$, the system starts from the step initial condition $x_i(0)=-i$ for all $i=1,2,\dots$.

The dynamics of the $q$-PushASEP consists of jumps in two directions.
There are two fixed non-negative parameters $R,L$ which are not simultaneously zero.
Each particle at $x_i^\a(t)$ for $i=1,2,\dots$ jumps to the right by one at rate $R(1-q^{\gap_i^\a(t)})$ independently of other particles.
This rate vanishes when $\gap_i^\a(t)=0$ preventing jumps to positions already occupied.

Particles at $x_i^\a(t)$ for $i=1,2,\dots$ jump to the left by one at rate $L$ independently of other particles.
If any particle $x_j^\a(t)$ jumps to the left,
then it instantaneously pushes its left neighbour $x_{j+1}^\a(t)$ to the left by one with probability $q^{\gap_{j+1}^\a(t)}$.
If particle $x_{j+1}^\a(t)$ is pushed then it can also push particle $x_{j+2}^\a(t)$ to the left by one with probability $q^{\gap_{j+2}^\a(t)}$ and so on.
If $\gap_{j+1}^\a(t)=0$, then the push to the left happens with probability one,
that is particles moving to the left always push their immediate left neighbours further.

To describe the long time behaviour of $q$-PushASEP, we define
\begin{equation}\label{defkappaa}
\kappa_\a=\kappa_\a(\theta)=\frac{\Psi_q'(\theta)}{(\log q)^2(Rq^\theta+Lq^{-\theta})}
\end{equation}
for all $\theta>0$.
Then the law of large numbers
\begin{equation}\label{LLNa}
\frac{x_n^\a(\kappa_\a n)}n\to f_\a
\end{equation}
holds as $n\to\infty$ where
\begin{equation}\label{deffa}
f_\a=-\frac{\Psi_q(\theta)+\log(1-q)}{\log q}+\kappa_\a(Rq^\theta-Lq^{-\theta})-1.
\end{equation}
The curve of the macroscopic particle positions is shown in Figure~\ref{fig:macro}.
Next we consider the rescaled particle positions
\begin{equation}\label{defxia}
\xi_n^\a=\frac{x_n^\a(\kappa_\a n)-f_\a n}{\chi_\a^{1/3}(\log q)^{-1}n^{1/3}}
\end{equation}
where
\begin{equation}\label{defchia}
\chi_\a=\frac12\left(\kappa_\a(\log q)^3(Rq^\theta-Lq^{-\theta})-\Psi_q''(\theta)\right)
\end{equation}
is a positive coefficient by Lemma~\ref{lemma:psimonotonicity}.

\begin{theorem}\label{thm:amain}
Let $q\in(0,1)$ be fixed and let $R,L\ge0$ so that they are not simultaneously zero.
Then there is a $\theta^*>0$ such that for all $\theta\in(0,\theta^*)$ we have for all $x\in\R$ that
\begin{equation}
\lim_{n\to\infty}\P(\xi_n^\a<x)=F_{\rm GUE}(x).
\end{equation}
\end{theorem}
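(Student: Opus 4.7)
The plan is to mirror, with the appropriate functional changes, the strategy used for the geometric $q$-TASEP in Section~\ref{s:g}. The starting point is the Fredholm determinant formula for the $q$-Laplace transform $\E\bigl[1/(\zeta q^{x_n^\a(\kappa_\a n)+n};q)_\infty\bigr]$ obtained in~\cite{MP17} (confirming the conjecture of~\cite{CP15}). Writing this as $\det(\id-K^\a_\zeta)_{L^2(\C)}$, one extracts the exponentially large part of the integrand in the form $\exp\bigl(n\,(G^\a(z)-G^\a(w))\bigr)$, where the function
\begin{equation*}
G^\a(w)=-\Psi_q(w)+\kappa_\a(\log q)^2(Rq^w+Lq^{-w})+(f_\a+1)w\log q
\end{equation*}
collects the contributions from the $q$-Pochhammer ratios (through the $q$-digamma function), the continuous-time semigroup factor $\exp\bigl(-\kappa_\a n\,(R(1-q^w)+L(1-q^{-w}))\bigr)$ of the $q$-PushASEP generator, and the shift $q^{w(x_n^\a+n)}$ evaluated at the macroscopic location $x_n^\a\approx f_\a n$.

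The constants $\kappa_\a$ and $f_\a$ in~\eqref{defkappaa} and~\eqref{deffa} are chosen precisely so that $w=\theta$ is a double critical point: a direct differentiation gives $(G^\a)'(\theta)=0$ and $(G^\a)''(\theta)=0$, while $(G^\a)'''(\theta)/6=\chi_\a(\log q)^3$ with $\chi_\a$ as in~\eqref{defchia}. Accordingly, I would perform the local change of variables $w=\theta+\wt w/n^{1/3}$, $z=\theta+\wt z/n^{1/3}$, and set $\zeta=-q^{-f_\a n-\chi_\a^{1/3}(\log q)^{-1}n^{1/3}x+n}$, matching the scaling~\eqref{defxia}. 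After this substitution the exponential part of the kernel tends pointwise to $e^{\wt z^3/3-\wt z v}/e^{\wt w^3/3-\wt w u}$, the factor $\pi/\sin(\pi(z-w))$ becomes $1/(\wt z-\wt w)$ upon absorbing the Jacobian, and the $q$-Laplace transform converges to $\P(\xi_n^\a<x)$ through the standard argument of~\cite{FV13} based on the fact that $\zeta\mapsto 1/(\zeta q^k;q)_\infty$ approximates $\id_{k\ge0}$ in a suitable sense. The resulting limiting kernel is the Airy kernel, producing $F_{\rm GUE}(x)$.

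The main obstacle, as in~\cite{Vet15} and in the geometric models treated earlier in this paper, is the construction of integration contours passing through $\theta$ on which $\Re(G^\a)$ attains a strict maximum at $\theta$ for the $w$-contour and a strict minimum at $\theta$ for the $z$-contour: the steep descent property. The natural choice is a small positively oriented loop in the $w$-variable around the real point $\theta$ together with a vertical line $\{\Re(w)=\theta+\varepsilon\}$ for $z$, and one must check that these contours avoid the poles arising from the sine factor (those at $z-w\in\Z$) as well as the poles of $1/(q^w;q)_\infty$ on the positive real axis. For $L=0$ this reduces essentially to a $q$-TASEP computation and is tractable, but the new $Lq^{-w}$ term in $G^\a$ explodes as $\Re w\to-\infty$ and therefore must be controlled on the loop contour; this is exactly what forces the smallness condition $\theta<\theta^*$ in the statement. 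I would prove steep descent by differentiating $\Re G^\a$ along each contour, decomposing the resulting expression into summands of definite sign by exploiting the series representation $\Psi_q(w)=-\log(1-q)+\log q\sum_{k\ge0}q^{k+w}/(1-q^{k+w})$, and using that for $\theta$ small both $Rq^\theta$ and $Lq^{-\theta}$ remain comparable, so that the $\Psi_q$ term dominates exactly as in the pure $q$-TASEP analysis.

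To conclude, I would combine pointwise convergence of the kernel with integrable tail bounds on the steep descent contours to upgrade to trace-norm convergence of the Fredholm determinants (standard once steep descent is established), and then invoke the $q$-Laplace inversion argument of~\cite{FV13} to pass from convergence of $\E\bigl[1/(\zeta q^{x_n^\a(\kappa_\a n)+n};q)_\infty\bigr]$ to the distributional statement $\P(\xi_n^\a<x)\to F_{\rm GUE}(x)$.
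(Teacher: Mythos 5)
Your overall roadmap — Fredholm determinant from \cite{MP17}, identification of the double critical point at $\theta$, change of variables, steep descent contours, localization, and convergence to the Airy kernel — matches the structure of the paper's proof. However, the crucial and genuinely new step, establishing the steep descent property (Proposition~\ref{prop:asteep}), is sketched in a way that cannot work, and for a reason that matters.

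You claim that for small $\theta$ ``the $\Psi_q$ term dominates exactly as in the pure $q$-TASEP analysis.'' This is the reverse of what actually happens. After differentiating $\Re(f_0^\a)$ along $\ol\C_\theta$, expressing everything via the auxiliary functions $h(b,s)$ and $e(b,s)$ of Section~\ref{s:steep}, and cancelling the leading $\sin s$ terms using the vanishing of the first derivative at $s=0$, one is left with an expression of definite structure (equation \eqref{f0aderivative2}): the contribution of the $q$-Pochhammer factor (i.e.\ the $\Psi_q$ piece) is a sum of terms with coefficient of order $(1-q^\theta)^2\sim(\log q)^2\theta^2$, which is negligible as $\theta\to0$; and this contribution has the \emph{wrong} sign for steep descent (it is nonnegative). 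What saves the argument is the drift contributions from $R\kappa_\a q^W$, $L\kappa_\a q^{-W}$ and the linear term $(f_\a+1)\log q\,W$: using $\kappa_\a\sim\frac1{(R+L)(\log q)^2\theta^2}$ and $f_\a+1\sim\frac{R-L}{R+L}\frac1{(\log q)^2\theta^2}$, the $L$-term contributes $-4L/(R+L)$ and the $(f_\a+1)$-term contributes $2(L-R)/(R+L)$ as $\theta\to0$, summing to the strictly negative constant $-2$ uniformly in $s$. So the smallness of $\theta$ is required so that the negative $O(1)$ contribution from the drift and linear terms overwhelms the positive $O(\theta^2)$ contribution from the Pochhammer factor; it is not because ``$Lq^{-w}$ explodes as $\Re w\to-\infty$'' (which in any case does not happen on the bounded loop $\ol\C_\theta$, where $q^{-W}$ stays in a compact set, and moreover $q^{-W}\to0$ as $\Re W\to-\infty$ since $q<1$). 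An attempt to make the Pochhammer term the dominant one, as in the pure $q$-TASEP argument of Proposition~\ref{prop:gsteep}, would produce an inequality with the wrong sign and the proof would stall at precisely the step where the new model differs from its predecessors. The auxiliary function $e(b,s)$ introduced in \eqref{defe}, which lets one isolate and bound the $(1-\cos s)$ corrections, is the technical device that makes the paper's cancellation-plus-asymptotics argument go through; your outline does not supply a substitute for it.
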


\begin{remark}
We expect that the inhomogeneous $q$-PushASEP admits a $q$-Laplace transform formula similar to the one in Theorem~\ref{thm:afinite}
with the factor $(w;q)_\infty^n$ replaced by $\prod_{i=1}^n(w/a_i;q)_\infty$ in the function $h(w)$.
Based on such a formula, the BBP transition can be similarly deduced as for the other two models in this paper
without extra further work in the BBP case since the same steep descent contours can be used.
To the best of our knowledge the inhomogeneous $q$-Laplace transform formula does not appear explicitly in the literature
because the conjecture was stated in~\cite{CP15} and proved in~\cite{MP17} in the homogeneous case for simplicity.
However it is natural to expect the inhomogeneous formula to hold as it appeared in~\cite{BC11} and~\cite{BCS12} in the $L=0$ case corresponding to $q$-TASEP
and in Theorem 3.3 of~\cite{BCFV14} with different contours in the $R=0$ case.
\end{remark}

\section{Asymptotics for the geometric $q$-TASEP}
\label{s:g}

This section is devoted to the proof of Theorem~\ref{thm:gmain} for the geometric $q$-TASEP.
The $q$-Laplace transform formula below for the geometric $q$-TASEP follows from Theorem 2.4 and Remark 2.5 in~\cite{BC15}.

\begin{theorem}[\!\!\cite{BC15}]\label{thm:gfinite}
For all $z\in\mathbb C\setminus\R_+$,
\begin{equation}\label{gfinite}
\E\left(\frac1{(zq^{x_n^\g(t)+n};q)_\infty}\right)=\det\left(\id-K^\g_z\right)_{L^2(C_{a_1,\dots,a_n})}
\end{equation}
where the kernel $K^\g_z$ in the Fredholm determinant above is given by
\begin{equation}\label{defKgz}
K^\g_z(w,w')=\frac1{2\pi i}\int_{1/2+i\R}\frac\pi{\sin(-\pi s)}(-z)^s\frac{h(q^sw)}{h(w)}\frac1{q^sw-w'}\,\d s
\end{equation}
with
\begin{equation}
h(w)=\frac{\prod_{i=1}^n(w/a_i;q)_\infty}{\prod_{j=1}^t(\alpha_jw;q)_\infty}.
\end{equation}
The integration contour $C_{a_1,\dots,a_n}$ in \eqref{gfinite} is a positively oriented curve which contains $a_1,\dots,a_n$ and no other poles of the kernel.
\end{theorem}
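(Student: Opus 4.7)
The plan is to derive the formula by combining three standard ingredients of the Borodin--Corwin framework: the identification of geometric $q$-TASEP as a marginal of a $q$-Whittaker measure, nested contour integral formulas for $q$-moments of $q^{x_n^\g(t)+n}$ coming from Macdonald $q$-difference operators, and a Mellin--Barnes summation that reassembles these moments into a Fredholm determinant.

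First I would establish the probabilistic identification: $x_n^\g(t)+n$ is distributed as $\lambda_n$ under the $q$-Whittaker measure with row variables $a_1,\dots,a_n$ and $t$ geometric specializations with parameters $\alpha_1,\dots,\alpha_t$. The crucial point is that the truncated $q$-geometric weights $p_{m,a_i\alpha_j}$ appearing in the parallel update \eqref{dynamicsg} are exactly those produced by the Markovian projection of the $q$-Whittaker $2d$ growth dynamics onto its left-edge coordinates, a construction carried out in~\cite{BC15}.

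Second, I would apply the first Macdonald $q$-difference operator iteratively to the Cauchy identity for $q$-Whittaker polynomials. Since the $q$-Whittaker polynomials are eigenfunctions with eigenvalues expressible through $a_1,\dots,a_n$, dividing by the partition function gives, for every integer $k\ge 1$, a formula of the form
\begin{equation*}
\E\bigl[q^{k(x_n^\g(t)+n)}\bigr]=\frac{(-1)^k q^{k(k-1)/2}}{(2\pi\I)^k}\oint\cdots\oint\prod_{1\le A<B\le k}\frac{z_A-z_B}{z_A-qz_B}\prod_{j=1}^k\frac{h(qz_j)}{h(z_j)}\,\frac{\d z_j}{z_j},
\end{equation*}
where the contours are nested, each containing $a_1,\dots,a_n$, and the contour of $z_A$ is large enough to contain $q$ times the contour of $z_B$ for $B>A$. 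The function $h$ emerging from this procedure is precisely the one in the statement.

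Third, I would form the generating function $\sum_{k\ge 0}\frac{z^k}{(q;q)_k}\E[q^{k(x_n^\g(t)+n)}]$, which by the $q$-exponential identity $\sum_{k\ge 0}u^k/(q;q)_k=1/(u;q)_\infty$ equals the left-hand side of \eqref{gfinite}. The Mellin--Barnes trick then rewrites the sum as a vertical integral,
\begin{equation*}
\sum_{k\ge 0}\frac{(-z)^k}{(q;q)_k}F(q^k)=\frac{1}{2\pi\I}\int_{1/2+\I\R}\frac{\pi}{\sin(-\pi s)}(-z)^s F(q^s)\,\d s,
\end{equation*}
upon closing to the right and collecting residues at $s=0,1,2,\dots$. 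Substituting the $k$-th moment formula, collapsing the nested contours to a single positively oriented curve $C_{a_1,\dots,a_n}$ while absorbing the resulting residues into the factor $1/(q^s w-w')$, and recognizing the Cauchy-type determinantal structure in the symmetrization of the integrand converts the expression into the Fredholm determinant $\det(\id-K^\g_z)_{L^2(C_{a_1,\dots,a_n})}$ with kernel \eqref{defKgz}. The condition $z\in\mathbb C\setminus\R_+$ enters here to make $(-z)^s$ single-valued on the vertical line and to ensure convergence of the Mellin--Barnes integral.

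The main obstacle is the contour-coalescence combined with the interchange of summation and integration in the Mellin--Barnes step: one must analytically continue the $k$-th moment formula to non-integer $s$, estimate $h(q^s w)/h(w)$ as $|\Im s|\to\infty$ to justify the vertical contour, and verify that the residues arising from merging the nested $z$-contours are precisely those encoded by the simple pole of $1/(q^s w-w')$. Once these analytic inputs are in place, the identity \eqref{gfinite} follows by matching the Fredholm expansion term-by-term with the symmetrized nested integrals, exactly as in the proof of Theorem~2.4 of~\cite{BC15}.
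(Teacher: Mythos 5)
Your outline is correct and follows essentially the same route as the source this theorem is quoted from: the paper itself gives no proof but imports the identity from Theorem 2.4 and Remark 2.5 of~\cite{BC15}, whose argument is exactly the chain you describe (geometric $q$-TASEP as the $\lambda_n$-marginal of a $q$-Whittaker measure with alpha specializations, nested-contour $q$-moment formulas from the first Macdonald difference operator at $t=0$, and the $q$-exponential generating series turned into the Mellin--Barnes Fredholm determinant, with analytic continuation from $|z|<1$ to $\mathbb C\setminus\R_+$). Your sanity check is consistent: $h(qz)/h(z)=\prod_i a_i/(a_i-z)\prod_s(1-\alpha_s z)$ reproduces the correct one-step moments, and the delicate steps you flag (contour coalescence, residue bookkeeping into the $1/(q^sw-w')$ pole, interchange of sum and integral) are precisely the content of the cited machinery rather than new gaps.
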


The integration contours in Theorem~\ref{thm:gfinite} are to be deformed in a way that Laplace's method of steepest descent can be applied.
We define the contours to be used as
\begin{equation}\label{defCD}
\C_\theta=\left\{W(s)=\log_q(a-(a-q^\theta)e^{is}),s\in(-\pi,\pi]\right\},\quad\D_\beta=\left\{Z(t)=\beta+i\frac t{\log q},t\in\R\right\}
\end{equation}
for any $\beta\in\R$.
The contour $\C_\theta$ is the image of the circle around $a$ that passes through $q^\theta$ under the map $\log_q$.
The contour $\D_\beta$ is a vertical line with real part $\beta$.

\subsection{GUE Tracy--Widom limit}

We prove the first part of Theorem~\ref{thm:gmain} for the geometric $q$-TASEP in this subsection.

\begin{lemma}\label{lemma:gLHS}
Let
\begin{equation}\label{zchoiceg}
z^\g_x=-q^{-(f_\g+1)n-\frac{\chi_\g^{1/3}}{\log q}xn^{1/3}}.
\end{equation}
Then for the left-hand side of \eqref{gfinite} on the time scale $t=\kappa_\g n$
\begin{equation}\label{qLaplaceg}
\lim_{n\to\infty}\E\left(\frac1{(z^\g_xq^{x_n^\g(\kappa_\g n)+n};q)_\infty}\right)=\lim_{n\to\infty}\P(\xi_n^\g<x).
\end{equation}
\end{lemma}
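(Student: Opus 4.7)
The plan is to reduce Lemma~\ref{lemma:gLHS} to the standard $q$-Laplace-to-indicator device used in~\cite{FV13,Bar14,Vet15} (originating in Lemma 4.1.39 of~\cite{BC11}). First I would plug \eqref{zchoiceg} and the definition of $\xi_n^\g$ in \eqref{defxietag} into the argument of the $q$-Pochhammer symbol. Using the relation $x_n^\g(\kappa_\g n)+n=(f_\g+1)n+\xi_n^\g\,\chi_\g^{1/3}(\log q)^{-1}n^{1/3}$, the exponents telescope to yield
\begin{equation*}
z_x^\g\,q^{x_n^\g(\kappa_\g n)+n}=-q^{(\xi_n^\g-x)\chi_\g^{1/3}(\log q)^{-1}n^{1/3}}.
\end{equation*}
Writing $\phi(y):=1/(-q^y;q)_\infty$ and $Y_n:=(\xi_n^\g-x)\chi_\g^{1/3}(\log q)^{-1}n^{1/3}$, the left-hand side of \eqref{qLaplaceg} becomes $\E[\phi(Y_n)]$.

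Next I would record the basic properties of $\phi$: the product representation $(-q^y;q)_\infty=\prod_{i\ge0}(1+q^{y+i})$ shows that $\phi:\R\to(0,1)$ is smooth and strictly increasing with $\phi(-\infty)=0$ and $\phi(+\infty)=1$. Because $\log q<0$ while $\chi_\g>0$, the coefficient $\chi_\g^{1/3}(\log q)^{-1}$ is negative, so $Y_n$ carries the sign opposite to that of $\xi_n^\g-x$ and $|Y_n|$ grows like $n^{1/3}$ off the null set $\{\xi_n^\g=x\}$. Consequently $\phi(Y_n)\to\id_{\{\xi_n^\g<x\}}$ pointwise outside this null set.

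The heart of the argument is a monotone squeeze. Setting $c_n:=\chi_\g^{1/3}|\log q|^{-1}n^{1/3}$ and decomposing the expectation along $\{\xi_n^\g\le x-\delta\}$, $\{|\xi_n^\g-x|<\delta\}$, $\{\xi_n^\g\ge x+\delta\}$, the monotonicity of $\phi$ and the bound $\phi\le 1$ give, for every $\delta>0$,
\begin{equation*}
\phi(\delta c_n)\,\P(\xi_n^\g\le x-\delta)\le\E[\phi(Y_n)]\le\P(\xi_n^\g<x+\delta)+\phi(-\delta c_n).
\end{equation*}
Since $\phi(\delta c_n)\to 1$ and $\phi(-\delta c_n)\to 0$ as $n\to\infty$, letting $n\to\infty$ and then $\delta\to 0$ and invoking the continuity on $\R$ of the limiting laws of Theorem~\ref{thm:gmain} (the distributions $F_{\rm GUE}$, $F_{\rm BBP,\bf b}$, $G_k$) delivers \eqref{qLaplaceg}.

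There is no genuine obstacle in the lemma itself; the statement is pure bookkeeping that transfers asymptotic information from the $q$-Laplace transform on the right-hand side of \eqref{gfinite} to the distribution function of $\xi_n^\g$. The real work, deferred to the later sections, lies in the steep descent analysis of the Fredholm determinant in Theorem~\ref{thm:gfinite} that will ultimately compute $\lim_n\E[\phi(Y_n)]$.
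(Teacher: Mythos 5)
Your proof is essentially the paper's proof: the paper performs the same substitution to reach $\E\bigl[(-q^{\chi_\g^{1/3}(\log q)^{-1}(\xi_n^\g-x)n^{1/3}};q)_\infty^{-1}\bigr]$, notes $\chi_\g>0$, and then simply cites the argument of Section~5 of~\cite{FV13}, which is precisely the monotone squeeze with the smooth sigmoid $\phi(y)=1/(-q^y;q)_\infty$ approaching $\id_{\{y>0\}}$ that you spell out. One small caution in your phrasing: you should invoke the continuity of the function to which $\E[\phi(Y_n)]$ is shown to converge by the subsequent Fredholm-determinant analysis (e.g.\ $F_{\rm GUE}$), rather than ``the limiting laws of Theorem~\ref{thm:gmain},'' since the lemma is an ingredient of that theorem's proof; with that understood the argument is correct.
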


\begin{proof}
By the definition of $\xi_n^\g$ in \eqref{defxietag} and using \eqref{zchoiceg}, we can write
\begin{equation}
\E\left(\frac1{(z^\g_xq^{x_n^\g(\kappa_\g n)+n};q)_\infty}\right)
=\E\left(\frac1{(-q^{\chi_\g^{1/3}(\log q)^{-1}(\xi_n^\g-x)n^{1/3}};q)_\infty}\right).
\end{equation}
Since $\chi_\g>0$ (see Lemma~\ref{lemma:psimonotonicity}), the $q$-Laplace transform above has the same limit as the right-hand side of \eqref{qLaplaceg}
using the argument in Section 5 of~\cite{FV13}.
\end{proof}

We have to show that under the conditions of Theorem~\ref{thm:gmain} and with the scaling \eqref{zchoiceg},
\begin{equation}\label{gRHS}
\lim_{n\to\infty}\det(\id-K^\g_{z^\g_x})_{L^2(C_{a_1,\dots,a_n})}=F_{\rm GUE}(x).
\end{equation}
Lemma~\ref{lemma:gLHS} and \eqref{gRHS} imply the Tracy--Widom limit for the geometric $q$-TASEP in Theorem~\ref{thm:gmain}.
The limit \eqref{gRHS} follows from Propositions~\ref{prop:glocalization} and \ref{prop:gconvergence} as it is explained here.

First we apply our choice of parameters $\alpha_1=\alpha_2=\dots=a$ and $a_{m+1}=a_{m+2}=\dots=a$.
Then we perform the change of variables $w=q^W, w'=q^{W'}, s=Z-W$ in the Fredholm determinant on the right-hand side of \eqref{gfinite}.
The Fredholm determinant on the right-hand side of \eqref{gfinite} for the choice \eqref{zchoiceg} is equal to the Fredholm determinant of the rescaled kernel
\begin{multline}\label{Krescaledg}
q^W\log qK^\g_{z^\g_x}(q^W,q^{W'})\\
=\frac{q^W\log q}{2\pi i}\int_{\theta+i\R}\frac{\d Z}{q^Z-q^{W'}}\frac\pi{\sin(\pi(Z-W))}
\frac{e^{nf_0^\g(W)+n^{1/3}f_2^\g(W)}}{e^{nf_0^\g(Z)+n^{1/3}f_2^\g(Z)}}\frac{\Phi^\g(W)}{\Phi^\g(Z)}
\end{multline}
where
\begin{align}
f_0^\g(W)&=\kappa_\g\log(aq^W;q)_\infty-\log(q^W/a;q)_\infty+(f_\g+1)\log q\,W,\\
f_2^\g(W)&=\chi_\g^{1/3}xW
\end{align}
and
\begin{equation}\label{defPhig}
\Phi^\g(W)=\frac{(q^W/a;q)_\infty^m}{\prod_{i=1}^m(q^W/a_i;q)_\infty}.
\end{equation}
The integration contour for the kernel in \eqref{Krescaledg} for $W$ has to encircle $\log_qa$ and $\log_qa_i$ for all $i=1,\dots,m$
but no other singularities of the kernel.
We will choose the contour for $W$ to be $\C_\theta$ and the contour for $Z$ to be $\D_\theta$ defined in \eqref{defCD}.
This can be achieved by continuous deformation without crossing any poles of the integrand
which is checked in Step 1 in the proof of Proposition~\ref{prop:glocalization}.

Taylor expansion around $\theta$ yields that
\begin{equation}\label{f0gTaylor}
f_0^\g(W)=f_0^\g(\theta)-\frac{\chi_\g}3(W-\theta)^3+\O((W-\theta)^4).
\end{equation}

The following steep descent properties for the contours $\C_\theta$ and $\D_\theta$ are the key for the asymptotics of the geometric $q$-TASEP.

\begin{proposition}\label{prop:gsteep}
Assume that the conditions of Theorem~\ref{thm:gmain} hold.
\begin{enumerate}
\item
The contour $\C_\theta$ is of steep descent for the function $\Re(f_0^\g)$ in the sense that the function attains its maximum $q^\theta$
which corresponds to $s=0$, it increases for $s\in(-\pi,0)$ and it decreases for $s\in(0,\pi)$.
\item
The function $-\Re(f_0^\g)$ is periodic along $\D_\theta$ with period $2\pi|\log q|$.
The period symmetric about the real axis which corresponds to $t\in[-\pi|\log q|,\pi|\log q|]$ is of steep descent
for the function $-\Re(f_0^\g)$ in the following sense.
The function attains its maximum at $q^\theta$ which corresponds to $t=0$, it increases for $t\in(-\pi|\log q|,0)$ and it decreases for $t\in(0,\pi|\log q|)$.
\end{enumerate}
\end{proposition}

Proposition~\ref{prop:gsteep} is proved in Subsection~\ref{ss:gsteep}.
The rest of the argument can be done very similarly to Propositions 5.1--5.4 of~\cite{Vet15}.

We introduce the V-shaped contour
\begin{equation}\label{defVcont}
V_{\beta,\varphi}^\delta=\{\beta+e^{i\varphi\sgn(t)}|t|,t\in[-\delta,\delta]\}
\end{equation}
where $\beta\in\R$ is the tip of the V, $\varphi\in(0,\pi)$ is its half-angle and $\delta\in\R_+$.
Let
\begin{equation}
K^\g_{x,\delta}(W,W')
=\frac{q^W\log q}{2\pi i}\int_{V_{\theta,\varphi}^\delta}\frac{\d Z}{q^Z-q^{W'}}\frac\pi{\sin(\pi(Z-W))}
\frac{e^{nf_0^\g(W)+n^{1/3}f_2^\g(W)}}{e^{nf_0^\g(Z)+n^{1/3}f_2^\g(Z)}}\frac{\Phi^\g(Z)}{\Phi^\g(W)}
\end{equation}
where $W,W'\in V_{\theta,\pi-\varphi}^\delta$.

\begin{proposition}\label{prop:glocalization}
Fix $x\in\R$.
For any fixed $\varepsilon>0$ small enough, there are $\delta>0$, $\varphi\in(\pi/6,\pi/2)$ and $n_0$ such that for all $n>n_0$
\begin{equation}
\left|\det(\id-K^\g_{z_x})_{L^2(C_{a_1,\dots,a_n})}-\det(\id-K^\g_{x,\delta})_{L^2(V_{\theta,\pi-\varphi}^\delta)}\right|<\varepsilon.
\end{equation}
\end{proposition}

\begin{proof}[Proof of Proposition~\ref{prop:glocalization}]
We proceed in three steps as in the proof of Proposition 5.1 in~\cite{Vet15}.

\emph{Step 1: Contour deformation.}
We apply the Cauchy theorem to change the integration contours for \eqref{Krescaledg} to $W\in\C_\theta$ and $Z\in\D_\theta$.
What we have to check is that, under the conditions of Theorem~\ref{thm:gmain}, no singularity of the integrand is crossed during the deformation.
The poles coming from the sine in the denominator are avoided if \eqref{gthetacond} holds, because $\Re(W)\in[\log_q(2a-q^\theta),\theta]$ and $\Re(Z)=\theta$ along $W\in\C_\theta$ and $Z\in\D_\theta$, hence $\sin(\pi(Z-W))$ does not have a $0$ if $\theta-1<\log_q(2a-q^\theta)$ which is the upper bound in \eqref{gthetacond}.

On the other hand there are poles coming from the $q$-Pochhammer symbols in the denominator at $Z=0,-1,-2,\dots$ which are certainly avoided.
Further poles are at $W=\log_qa_i,\log_qa_i-1,\log_qa_i-2,\dots$ for $i=1,2,\dots$.
The first ones at $\log_qa_i$ are encircled by $\C_\theta$ since it was assumed that $\log_qa_i$ all lie in the interval $[\log_qa,\log_q\ami]$.
The remaining poles at $\log_qa_i-1,\log_qa_i-2,\dots$ are avoided since $\log_qa_i-1<\log_q(2a-q^\theta)$
which holds by \eqref{gthetacond} and by $\log_qa_i<\theta$.

\emph{Step 2: Localization to short contours.}
Once the integration contours are changed to $W\in\C_\theta$ and $Z\in\D_\theta$, the integrand is bounded uniformly
as we are away from the critical point $\theta$.
The factor $\Phi^\g(W)/\Phi^\g(Z)$ coming from the different jump parameters for the first $m$ particles is also bounded and it converges to $1$
as long as the minimal jump parameter $\ami>q^\theta$ is away from the critical point.
Due to the factor $\pi/\sin(\pi(Z-W))$, the kernel $K^\g_{z_x}$ has a logarithmic divergence in the neighbourhood of $s=0$
using the parametrization of $\C_\theta$, but this is also integrable.
By Proposition~\ref{prop:gsteep} the contours $\C_\theta$ and $\D_\theta$ are of steep descent.
Hence the contour $W\in\C_\theta$ can be localized to $W\in V_{\theta,\pi-\varphi}^\delta$ for some $\varphi\in(\pi/6,\pi/2)$ and $\delta>0$
and the integration over $Z\in\D_\theta$ can be changed to $Z\in V_{\theta,\pi/2}^\delta$ by making an error of order $\O(\exp(-c\delta^3n))$
in the same way as in the proof of Proposition 5.1 in~\cite{Vet15}.

\emph{Step 3: Deformation of short contours.}
The contour $Z\in V_{\theta,\pi/2}^\delta$ can be changed to $Z\in V_{\theta,\varphi}^\delta$ for some $\varphi\in(\pi/6,\pi/2)$
with and error of order $\O(\exp(-c\delta^3n))$ if $\delta$ is small enough.
This is because the main contribution for the $Z$ integral is coming from the factor $e^{-nf_0^\g(Z)}$
and the local behaviour of $f_0^\g$ around $\theta$ is governed by the Taylor expansion \eqref{f0gTaylor}.
See the proof of Proposition 5.1 in~\cite{Vet15} for further details.
\end{proof}

\begin{proposition}\label{prop:gconvergence}
For any $x\in\R$, $\delta>0$ and $\varphi\in(\pi/6,\pi/2)$ we have that
\begin{equation}\label{gconvergence}
\det(\id-K^\g_{x,\delta})_{L^2(V_{\theta,\pi-\varphi}^\delta)}\to F_{\rm GUE}(x)
\end{equation}
as $n\to\infty$.
\end{proposition}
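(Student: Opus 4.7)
The plan is a standard steepest-descent/Laplace analysis combined with Hadamard-style bounds to promote pointwise convergence of the rescaled kernel to convergence of the Fredholm determinant. First I would change variables $W=\theta+\tilde w\,\chi_\g^{-1/3}n^{-1/3}$, $W'=\theta+\tilde w'\,\chi_\g^{-1/3}n^{-1/3}$, $Z=\theta+\tilde z\,\chi_\g^{-1/3}n^{-1/3}$, so that the localized V-contour $V_{\theta,\pi-\varphi}^\delta$ maps to $V_{0,\pi-\varphi}^{\delta\chi_\g^{1/3}n^{1/3}}$ and the $Z$-contour to $V_{0,\varphi}^{\delta\chi_\g^{1/3}n^{1/3}}$, both extending to infinity in the limit. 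The Taylor expansion~\eqref{f0gTaylor} together with the linear $f_2^\g$ gives
\[
nf_0^\g(W)+n^{1/3}f_2^\g(W)-nf_0^\g(\theta)-n^{1/3}\chi_\g^{1/3}x\theta=-\tilde w^3/3+x\tilde w+o(1),
\]
and the analogous identity holds with $\tilde z$ in place of $\tilde w$; the $\theta$-dependent constants cancel between numerator and denominator. For the rational prefactors I would expand $q^Z-q^{W'}=q^\theta\log q\,(Z-W')(1+o(1))$ and $\pi/\sin(\pi(Z-W))=(Z-W)^{-1}(1+o(1))$, and combine these with $\d Z=\chi_\g^{-1/3}n^{-1/3}\d\tilde z$ and the $L^2$-rescaling factor $\chi_\g^{-1/3}n^{-1/3}$, so that all powers of $n$ and $\chi_\g$ cancel exactly. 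The perturbation ratio $\Phi^\g(Z)/\Phi^\g(W)\to1$ by continuity of $\Phi^\g$ at $\theta$, valid since $\theta>\log_q\alpha$ places the critical point away from every $q$-Pochhammer zero of $\Phi^\g$. The pointwise limit of the rescaled kernel is therefore
\[
K_{\rm Ai,x}(\tilde w,\tilde w')=\frac{1}{2\pi i}\int_{V_{0,\varphi}}\frac{\d\tilde z}{(\tilde z-\tilde w)(\tilde z-\tilde w')}\,\frac{e^{-\tilde w^3/3+x\tilde w}}{e^{-\tilde z^3/3+x\tilde z}},
\]
a standard contour-integral representation of the Airy kernel whose Fredholm determinant on $V_{0,\pi-\varphi}$ equals $F_{\rm GUE}(x)$.

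To upgrade the pointwise convergence to convergence of the Fredholm determinants I would establish a uniform bound $|\tilde K_n(\tilde w,\tilde w')|\leq Ce^{-c(|\tilde w|^3+|\tilde w'|^3)}$ on the rescaled V-contour. In a neighbourhood of the tip the bound follows directly from the cubic term in~\eqref{f0gTaylor}: the angles $\pi-\varphi\in(\pi/2,2\pi/3)$ for $\tilde w$ and $\varphi\in(\pi/3,\pi/2)$ for $\tilde z$ make $\Re(\tilde w^3)>0$ and $\Re(\tilde z^3)<0$, so the exponential factor already delivers the required decay. On the remainder of the contour (corresponding to $|W-\theta|>\delta'$ for a fixed small $\delta'$ in the original variables) the steep descent statement of Proposition~\ref{prop:gsteep} provides a global inequality $\Re f_0^\g(W)-\Re f_0^\g(\theta)\leq-c$, producing an $e^{-cn}$ contribution easily absorbed into the cubic estimate. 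Hadamard's inequality then yields
\[
|\det(\tilde K_n(\tilde w_i,\tilde w_j))_{i,j=1}^k|\leq k^{k/2}C^k\prod_{i=1}^k e^{-c|\tilde w_i|^3},
\]
which is summable against $(k!)^{-1}$, so dominated convergence applied term-by-term to the Fredholm series delivers~\eqref{gconvergence}.

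The main obstacle is the passage from the pointwise limit to Fredholm determinant convergence: one must prove the exponential tail bound uniformly across both regimes (near the tip, where the cubic Taylor term dominates, and on the far tails, where only the global steep descent inequality of Proposition~\ref{prop:gsteep} is available), while simultaneously arranging the simple poles $1/(\tilde z-\tilde w)$ and $1/(\tilde z-\tilde w')$ to remain off the $\tilde z$-contour by choosing the V-angles so that $V_{0,\pi-\varphi}$ and $V_{0,\varphi}$ are strictly transverse away from their common tip, the mild singularity at the tip being locally integrable exactly as in Step~2 of the proof of Proposition~\ref{prop:glocalization}. Once these ingredients are in place the argument runs in complete parallel to Propositions~5.2--5.5 of~\cite{Vet15}.
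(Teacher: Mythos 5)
Your argument follows the same steepest-descent route as the paper's proof: rescale by $n^{-1/3}$ around $\theta$, invoke the Taylor expansion~\eqref{f0gTaylor}, take the pointwise limit to an Airy-type kernel, and upgrade to Fredholm-determinant convergence by a Hadamard/dominated-convergence argument. The paper is terse and delegates the uniform-bound step to~\cite{Vet15}; you spell it out, which is fine. The one genuine structural difference is that where you cite the fact that the Fredholm determinant of the contour-form Airy kernel on $V_{0,\pi-\varphi}$ equals $F_{\rm GUE}(x)$, the paper derives it: it writes the limit kernel as $AB$ (the factorization coming from the integral representation~\eqref{integraltrick} of $1/(\tilde z-\tilde w')$), applies $\det(\id-AB)=\det(\id-BA)$ to move the determinant to $L^2(\R_+)$, and then rescales by $\chi_\g^{1/3}$ to land on the standard Airy kernel. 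If you prefer not to cite the contour-form identity as given, you should reproduce this two-line reformulation.

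One smaller inaccuracy: your claimed pointwise bound $|\tilde K_n(\tilde w,\tilde w')|\le Ce^{-c(|\tilde w|^3+|\tilde w'|^3)}$ is stronger than what holds as written. The exponential factor $e^{-\tilde w^3/3+x\tilde w}$ in the rescaled kernel decays only in $\tilde w$; the decay in $\tilde w'$ comes solely from the $1/(\tilde z-\tilde w')$ denominator and is therefore polynomial. This does not break the argument: Hadamard's inequality applied to the \emph{rows} of the $k\times k$ matrix $(\tilde K_n(\tilde w_i,\tilde w_j))$ only requires one-variable exponential decay together with a uniform-in-$\tilde w'$ bound, and yields exactly the $k^{k/2}C^k\prod_i e^{-c|\tilde w_i|^3}$ estimate you state. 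Alternatively, conjugating the kernel by $e^{-\tilde w^3/6}$ (which leaves the Fredholm determinant invariant and respects $\Re(\tilde w^3)>0$ on $V_{0,\pi-\varphi}$) produces the symmetric two-sided bound you assert. Either repair is routine, but should be acknowledged, and you must also account for the logarithmic singularity of the kernel at the common tip of the $\tilde w$- and $\tilde z$-contours, exactly as you note in your final paragraph.
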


\begin{proof}[Proof of Proposition~\ref{prop:gconvergence}]
The convergence \eqref{gconvergence} follows as in the proofs of Propositions~5.2--5.4 in~\cite{Vet15}.
In particular, after the change of variables $W=\theta+wn^{-1/3}$, $W'=\theta+w'n^{-1/3}$, $Z=\theta+zn^{-1/3}$ and by using the Taylor approximation \eqref{f0gTaylor}, the Fredholm determinant $\det(\id-K^\g_{x,\delta})$ differs by an error of order $\O(n^{-1/3})$ from $\det(\id-AB)$ where
\begin{equation}
A(w,\lambda)=e^{-\chi_\g w^3/3+(\chi_\g^{1/3}x+\lambda)w},\qquad
B(\lambda,w)=\frac1{2\pi i}\int\frac{\d z}{z-w}\,e^{\chi_\g z^3/3-(\chi_\g^{1/3}x+\lambda)z}.
\end{equation}
The factor $\Phi^\g(W)/\Phi^\g(Z)\to1$ and it does not appear in the limit because $\ami$ is away from $q^\theta$.
Using the identity $\det(\id-AB)=\det(\id-BA)$ and by rescaling by $\chi_\g^{1/3}$ yields \eqref{gconvergence}.
\end{proof}

\subsection{Baik--Ben Arous--P\'ech\'e limit}
\label{ss:BBP}

We prove the second part of Theorem~\ref{thm:gmain} about the Baik--Ben Arous--P\'ech\'e limit for the rescaled position $\xi_n^\g$ in the geometric $q$-TASEP.
We proceed in almost the same way as in the proof of the Tracy--Widom limit, the only difference is
the behaviour of the $\Phi^\g(W)/\Phi^\g(Z)$ factor in the kernel $K^\g_{x,\delta}$ in Proposition~\ref{prop:gconvergence}.
Hence the contour deformation and localization steps in Proposition~\ref{prop:glocalization} can be done identically.

After the change of variables $W=\theta+wn^{-1/3}$, $W'=\theta+w'n^{-1/3}$, $Z=\theta+zn^{-1/3}$ which corresponds to the first step in the proof of Proposition~\ref{prop:gconvergence}, we have $|wn^{-1/3}-\theta|<\delta$ and $|zn^{-1/3}-\theta|<\delta$ for some small $\delta>0$.
With the scaling of parameters \eqref{gBBPai} in the relevant factor of $\Phi^\g(W)$ in \eqref{defPhig} and with the change of variables above, we can write
\begin{equation}\label{Phioneterm}
(q^W/a_i;q)_\infty=\left(1-q^{(w-b_i\chi_\g^{-1/3})n^{-1/3}}\right)\prod_{k=1}^\infty\left(1-q^{k+(w-b_i\chi_\g^{-1/3})n^{-1/3}}\right).
\end{equation}
The infinite product on the right-hand side above is bounded on the integration contour cut off by the bound $|wn^{-1/3}-\theta|<\delta$
and it converges to the constant $(q;q)_\infty$ for any fixed $w$ along the localized contour as $n\to\infty$.
The first factor on the right-hand side of \eqref{Phioneterm} is asymptotically equal to $-\log q(w-b_i\chi_\g^{-1/3})n^{-1/3}$ as $n\to\infty$.
Multiplying \eqref{Phioneterm} for all $i$ and for $W$ and $Z$ results in
\begin{equation}\label{Phiratio}
\frac{\Phi^\g(W)}{\Phi^\g(Z)}\sim\prod_{i=1}^m\frac{-\log q(z-b_i\chi_\g^{-1/3})n^{-1/3}(q;q)_\infty}{-\log q(w-b_i\chi_\g^{-1/3})n^{-1/3}(q;q)_\infty}
=\prod_{i=1}^m\frac{z-b_i\chi_\g^{-1/3}}{w-b_i\chi_\g^{-1/3}}
\end{equation}
where the ratio $\Phi^\g(W)/\Phi^\g(Z)$ remains bounded on their contours $|wn^{-1/3}-\theta|<\delta$ and $|zn^{-1/3}-\theta|<\delta$.
The right-hand side of \eqref{Phiratio} after the rescaling of the variables $w$ and $z$ by $\chi_\g^{1/3}$ simplifies exactly
to the extra factor in the kernel $K_{\rm BBP,\bf b}$ of the Baik--Ben Arous--P\'ech\'e distribution in \eqref{defKBBP} compared to the Airy kernel.
Since there is an $e^{-c(w^3+z^3)}$ decay in the $w$ and $z$ variables along their respective integration contours,
it remains integrable also with the extra factor in \eqref{Phiratio}.
Therefore, the rest of the proof of Proposition~\ref{prop:gconvergence} applies for this case
and it yields the convergence \eqref{gBBP} for the geometric $q$-TASEP.

\subsection{Finite GUE limit}
\label{ss:Gaussian}

The main step in the proof of the finite GUE limit in the third part of Theorem~\ref{thm:gmain} is the analogue of Proposition~\ref{prop:gsteep}
about the steep decent property of the integration contours.
We first set
\begin{equation}\label{zchoiceg2}
\wt z^\g_x=-q^{-(g_\g+1)n-\frac{\sigma_\g^{1/2}}{\log q}xn^{1/2}}
\end{equation}
and we define
\begin{align}
g_0^\g(W)&=\kappa_\g\log(aq^W;q)_\infty-\log(q^W/a;q)_\infty+(g_\g+1)\log q\,W,\label{defg0g}\\
g_1^\g(W)&=\sigma_\g^{1/2}xW.\label{defg1g}
\end{align}
Then the rescaled kernel can be written as
\begin{multline}
q^W\log qK^\g_{\wt z^\g_x}(q^W,q^{W'})\\
=\frac{q^W\log q}{2\pi i}\int_{\log_q\ami+i\R}\frac{\d Z}{q^Z-q^{W'}}\frac\pi{\sin(\pi(Z-W))}
\frac{e^{ng_0^\g(W)+n^{1/2}g_1^\g(W)}}{e^{ng_0^\g(Z)+n^{1/2}g_1^\g(Z)}}\frac{\Phi^\g(W)}{\Phi^\g(Z)}.
\end{multline}
The function $g_0^\g$ has a critical point at $\log_q\ami$ and its Taylor expansion around the critical point is
\begin{equation}\label{g0Taylorg}
g_0^\g(W)=g_0^\g(\log_q\ami)-\frac{\sigma_\g}2(W-\log_q\ami)^2-\frac{\xi_\g}3(W-\log_q\ami)^3+\O((W-\log_q\ami)^4)
\end{equation}
where
\begin{equation}
\xi_\g=\frac12\left(\kappa_\g\Psi_q''(\log_q\ami+\log_qa)-\Psi_q''(\log_q\ami-\log_qa)\right).
\end{equation}

For $\delta>0$ small enough, we define a new integration contour
\begin{equation}\label{defCdelta}
\C_\theta^{\log_q\ami,\delta}=V_{\log_q\ami,19\pi/24}^\delta\cup\left\{W(s)=\log_q(a-(a-q^\theta)e^{is}),s\in(-\pi+\delta',\pi-\delta']\right\}
\end{equation}
for the variables $W,W'$ where $\delta'>0$ is chosen in a way that the endpoints of the two parts of the contour above coincide.
We do not give the explicit connection between $\delta$ and $\delta'$,
but we remark that they have the property that $\delta'\to0$ as $\delta\to0$.
The circular part of the contour $\C_\theta^{\log_q\ami,\delta}$ coincides with $\C_\theta$.
The angle $19\pi/24$ is chosen to be in $(3\pi/4,5\pi/6)$.
The contour $\C_\theta^{\log_q\ami,\delta}$ is shown in Figure~\ref{fig:contour}.

\begin{figure}
\centering
\psfrag{C}{$\C_\theta^{\log_q\ami,\delta}$}
\psfrag{Ct}{$\wt\C_\theta^{\,\log_q\ama,\delta}$}
\psfrag{al}{$\log_q\ami$}
\psfrag{alt}{$\log_q\ama$}
\psfrag{th}{$\theta$}
\psfrag{a}{$\log_qa$}
\includegraphics[width=150pt]{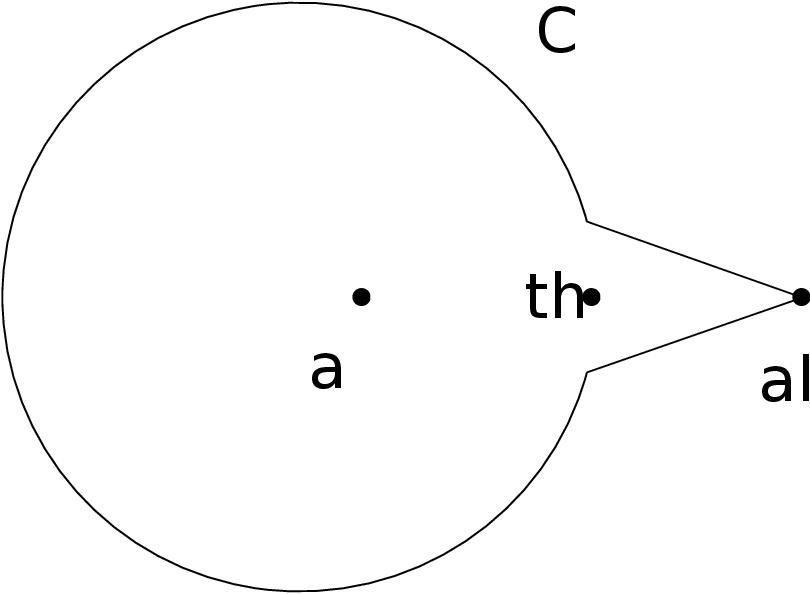}\hspace*{5em}
\includegraphics[width=150pt]{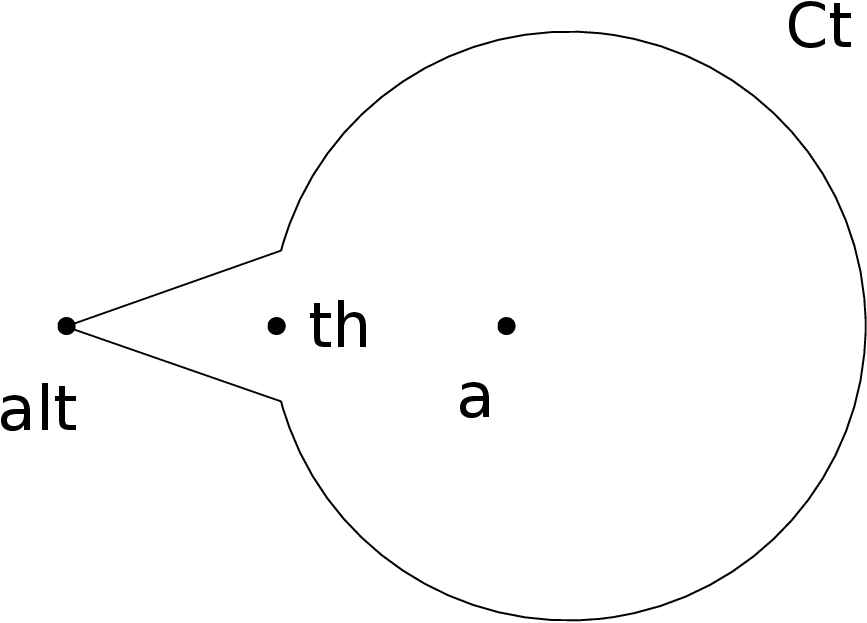}
\caption{Left: the integration contour $\C_\theta^{\log_q\ami,\delta}$ used for proving the finite GUE limit of the rescaled particle position
in geometric $q$-TASEP.
Right: the contour $\wt\C_\theta^{\,\log_q\ama,\delta}$ for the finite GUE limit in geometric $q$-PushTASEP.
\label{fig:contour}}
\end{figure}

\begin{proposition}\label{prop:gsteep2}
Assume that the conditions of Theorem~\ref{thm:gmain} hold.
\begin{enumerate}
\item For given $q,a,\ami$ satisfying the assumptions of Theorem~\ref{thm:gmain} there is an $\varepsilon>0$ with the following property.
For all $\theta\in(\log_q\ami-\varepsilon,\log_q\ami)$ the contour $\C_\theta^{\log_q\ami,\delta}$ is of steep descent for the function $\Re(g_0^\g)$
in the sense that the function attains its maximum at $\ami$ and it decreases away from this point.
\item The function $-\Re(g_0^\g)$ is periodic along $\D_{\log_q\ami}$ with period length $2\pi|\log q|$.
The period symmetric about the real axis of the contour $\D_{\log_q\ami}$ which corresponds to $t\in(-\pi|\log q|,\pi|\log q|]$
is of steep descent for the function $-\Re(g_0^\g)$ in the following sense.
The function attains its maximum at $\ami$ which corresponds to $t=0$, it increases for $t\in(-\pi|\log q|,0)$ and it decreases for $t\in(0,\pi|\log q|)$.
\end{enumerate}
\end{proposition}

Proposition~\ref{prop:gsteep2} is proved in Subsection~\ref{ss:gsteep}.
Now we are ready to prove the third part of Theorem~\ref{thm:gmain} for the geometric $q$-TASEP.
Similarly to Lemma~\ref{lemma:gLHS}, the left-hand side of \eqref{gfinite} has the same limit as
\begin{equation}
\lim_{n\to\infty}\E\left(\frac1{(\wt z^\g_xq^{x_n^\g(\kappa_\g n)+n};q)_\infty}\right)=\lim_{n\to\infty}\P(\eta_n^\g<x).
\end{equation}
The convergence of the right-hand side of \eqref{gfinite} proceeds via the same steps as in the proof of the Tracy--Widom limit.
Furthermore, a similar convergence result for $q$-TASEP with finite GUE limit is proved in Subsection 2.3 of~\cite{Bar14}.
Hence we only indicate the main steps in the present proof.

We deform the integration paths in \eqref{gfinite} to the steep descent contours $W\in\C_\theta^{\log_q\ami,\delta}$ and $Z\in\D_{\log_q\ami}$
in the same way as in Step 1 in the proof of Proposition~\ref{prop:glocalization}.
The assumptions of Theorem~\ref{thm:gmain} ensure that no poles are crossed during the deformation.
More precisely, there is a pole at $W=\log_q\ami$ which has to be encircled by the $W$ contour,
hence the $W$ contour should locally be modified on the scale $n^{-1/2}$ in the neighbourhood of $\log_q\ami$ so that it avoids this pole
and it crosses the real axis to the right of the pole.
The poles at $W=\log_qa_i-1,\log_qa_i-2,\dots$ all remain outside of $\C_\theta^{\log_q\ami,\delta}$ if the condition
\begin{equation}\label{galphacond2}
\log_q(2a-q^\theta)>\log_q\ami-1
\end{equation}
holds since $\log_qa_i\in[\log_qa,\log_q\ami]$.
The left-hand side of \eqref{galphacond2} is the point where the contour $\C_\theta^{\log_q\ami,\delta}$ crosses the real axis
which can be lower bounded by $\log_q(2a-\ami)$.
Hence we do not cross any pole if $\log_q(2a-\ami)>\log_q\ami-1$ which is condition \eqref{galphacond}.

The $Z$ contour cannot cross the $W$ contour, furthermore the condition $\Re(Z-W)>0$ should hold along the chosen contours
in order to use the formula
\begin{equation}\label{integraltrick}
\frac1{Z-W}=\int_{\R_+}\d\lambda e^{-\lambda(Z-W)}
\end{equation}
later in this proof.
The final choice for the $Z$ contour can be $\D_{\log_q\ami+n^{-1/2}}$.
This contour does not necessarily have the steep descent property for $-\Re(g_0^\g)$ but it is still true that along the contour the function $-\Re(g_0^\g)$
is maximal at the intersection with the real line for $n$ large enough which can be seen as follows.
Locally around $\log_q\ami$, the contour $\D_{\log_q\ami+n^{-1/2}}$ is of steep descent by the Taylor expansion \eqref{g0Taylorg}.
By the continuity of the function $g_0^\g$ away from its poles,
the values of $-\Re(g_0^\g)$ along the rest of $\D_{\log_q\ami+n^{-1/2}}$ are smaller than in a neighbourhood of $\log_q\ami$ for $n$ large enough.

Next we can localize the integrals to the neighbourhood of $\log_q\ami$ similarly to Step 2 in the proof of Proposition~\ref{prop:glocalization}.
Step 3 in the proof of Proposition~\ref{prop:glocalization} is not necessary here,
we can work with the vertical short contour $Z\in V_{\log_q\ami+n^{-1/2},\pi/2}$.
The overall error caused by the localization is of order $\O(\exp(-c\delta^2n))$.
This proves an analogue of Proposition~\ref{prop:glocalization}.

What remains to show is that the Fredholm determinant on the short contours converges to $G_k(x)$ on the right-hand side of \eqref{gGaussian}.
This is done in the following steps.
First, we apply the change of variables $W=\log_q\ami+wn^{-1/2}$, $W'=\log_q\ami+w'n^{-1/2}$, $Z=\log_q\ami+zn^{-1/2}$
and use Taylor approximation of $g_0^\g$ and $g_1^\g$ around $\log_q\ami$.
Similarly to the BBP regime, the $\Phi^\g(W)/\Phi^\g(Z)$ factor remains bounded when the distance of $Z$ from the pole at $\log_q\ami$ is at most a constant.
By a calculation similar to \eqref{Phioneterm}--\eqref{Phiratio}, the behaviour around $\log_q\ami$ is
\begin{equation}\label{PhiGaussian}
\frac{\Phi^\g(W)}{\Phi^\g(Z)}\sim\left(\frac zw\right)^k
\end{equation}
where $k$ is the multiplicity of $\ami$ within $\{a_1,a_2,\dots\}$.
Hence by using the integral representation \eqref{integraltrick} in the rescaled variables for the factor coming from the sine function in the denominator,
we obtain that the Fredholm determinant on the short contours differs by at most $\O(n^{-1/2})$ from $\det(\id-AB)$ where
\begin{equation}
A(w,\lambda)=e^{-\sigma_\g w^2/2+(\sigma_\g^{1/2}x+\lambda)w}\frac1{w^k},\qquad
B(\lambda,w)=\frac1{2\pi i}\int\frac{\d z}{z-w}\,e^{\sigma_\g z^2/2-(\sigma_\g^{1/2}x+\lambda)z}z^k.
\end{equation}
Here $\lambda\in\R_+$ and the previous short contours are blown up by $n^{1/2}$ to get the contours for $w$ and $z$,
that is the contour for $w$ is the local modification of $V_{0,19\pi/24}^{\delta n^{1/2}}$ that for $z$ is the modification of $V_{1,\pi/2}^{\delta n^{1/2}}$.
After the change of variables, the $w$ and $z$ contours remain at least a constant distance apart from each other and from the pole at $0$
and they cross the real axis to the right of $0$.
Finally the angle of the contours can be adjusted to coincide with the choice in \eqref{defHk}.
After rescaling by $\sigma_\g^{1/2}$, we get that $\det(\id-BA)=\det(\id-H_k)_{L^2((x,\infty))}$ proving \eqref{gGaussian} for the geometric $q$-TASEP.

\section{Asymptotics for the geometric $q$-PushTASEP}
\label{s:p}

The following $q$-Laplace transform formula holds for the particle positions in the geometric $q$-PushTASEP
with particle specific rates $a_i$ and time dependent parameters $\alpha_j$.

\begin{theorem}\label{thm:pfinite}
For any $z\in\mathbb C\setminus\R_+$,
\begin{equation}\label{pfinite}
\E\left(\frac1{(zq^{x_n^\p(t)+n};q)_\infty}\right)=\det\left(\id+K_z^\p\right)_{L^2(C_{a_1^{-1},\dots,a_n^{-1}})}
\end{equation}
where
\begin{equation}
K_z^\p(w,w')=\frac1{2\pi i}\int_{1/2+i\R}\frac\pi{\sin(-\pi s)}(-z)^s\frac{h(q^sw)}{h(w)}\frac1{q^sw-w'}\,\d s
\end{equation}
with
\begin{equation}
h(w)=\frac{\prod_{i=1}^n(a_iw;q)_\infty}{\prod_{j=1}^t(\alpha_j/w;q)_\infty}.
\end{equation}
The integration contour $C_{a_1^{-1},\dots,a_n^{-1}}$ in \eqref{pfinite} is positively oriented and it contains the poles at $a_1^{-1},\dots,a_n^{-1}$
and no other singularities of the kernel.
\end{theorem}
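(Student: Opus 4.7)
The formula \eqref{pfinite} is essentially a restatement of results from \cite{MP17} and \cite{BCFV14}, so the plan is to recall how it follows from nested contour integral $q$-moment formulas combined with the standard Mellin-Barnes summation procedure, with $h(w)$ dictated by the specific form of the geometric $q$-PushTASEP dynamics.

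The first step is to invoke the nested contour integral formula for the $k$-th $q$-moment $\E(q^{k(x_n^\p(t)+n)})$ of the geometric $q$-PushTASEP with step initial condition, which was established in \cite{MP17} (and is implicit in \cite{BCFV14}). The single-variable factor $h(w)$ appearing in the statement of Theorem~\ref{thm:pfinite} is exactly the one read off that integrand: the numerator $\prod_{i=1}^n(a_iw;q)_\infty$ encodes the inhomogeneous single-particle jump distributions of particles $1,\dots,n$, while the denominator $\prod_{j=1}^t(\alpha_j/w;q)_\infty$ accumulates the contribution of the $t$ discrete time steps. The second step is the $q$-exponential expansion
\begin{equation*}
\E\!\left(\frac{1}{(zq^{x_n^\p(t)+n};q)_\infty}\right)
= \sum_{k\ge 0}\frac{z^k}{(q;q)_k}\,\E\bigl(q^{k(x_n^\p(t)+n)}\bigr),
\end{equation*}
whose absolute convergence for $z\in\mathbb C\setminus\R_+$ one obtains from explicit moment bounds read off the nested contour integrals, following the argument of Section~3 of \cite{BCF12}.

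The third step turns this generating series into a Fredholm determinant. One introduces the Mellin-Barnes representation $\sum_{k\ge 0}z^k g(k)/(q;q)_k = \frac{1}{2\pi i}\int_{1/2+i\R}\frac{\pi}{\sin(-\pi s)}(-z)^s g(s)\,\d s$ (suitably interpreted), symmetrizes the nested contour integrals via a Cauchy-type determinantal identity, and recognizes the result as the Fredholm expansion of $\det(\id+K_z^\p)$. This part of the derivation is carried out in detail for $q$-TASEP in \cite{BC11,BCF12} and for $q$-PushASEP in \cite{MP17}, and transfers verbatim to the present setting once the correct $h(w)$ is inserted. The one point that genuinely differs from the $q$-TASEP case, and that I expect to be the only real bookkeeping obstacle, is the location of the singularities: since $(a_iw;q)_\infty$ now sits in the numerator of $h$, the ratio $h(q^sw)/h(w)$ has poles of the kernel at $w=a_i^{-1}q^{-k}$ for $k\ge 0$. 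This is what forces the integration contour to encircle $a_1^{-1},\dots,a_n^{-1}$ rather than $a_1,\dots,a_n$, and flips the overall sign from $\id-K$ to $\id+K$ relative to the $q$-TASEP formula \eqref{gfinite}. Verifying that a sufficiently small contour around these first poles indeed excludes the remaining lattice $a_i^{-1}q^{-k}$ with $k\ge 1$ is all that is needed to complete the identification.
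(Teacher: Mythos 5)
The paper's proof does not rederive the formula from $q$-moments; it is a citation chain. It uses the distributional identity $x_n^\p(t)\stackrel{\d}{=}-\lambda_1^{(n)}(t)-n$, where $\lambda_1^{(n)}(t)$ is a coordinate of a partition distributed according to a $q$-Whittaker measure with pure alpha specialization (established in Subsection 7.4 of \cite{MP17}), and then invokes Theorem 3.3 of \cite{BCFV14}, which gives a Fredholm determinant formula for the $q$-Laplace transform of exactly such a coordinate \emph{directly}, without summing moments. The finite-contour form in Theorem~\ref{thm:pfinite} is then read off Conjecture 3.11 of \cite{CMP19} (with the substitutions $\mu\leftrightarrow\alpha a_i$, $\nu=0$, $w\mapsto w/\alpha$), which is proved by \cite{BCFV14}.

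Your proposed route is genuinely different and has a gap that is not a bookkeeping issue. The $q$-exponential expansion in your second step,
\begin{equation*}
\E\!\left(\frac1{(zq^{x_n^\p(t)+n};q)_\infty}\right)=\sum_{k\ge0}\frac{z^k}{(q;q)_k}\,\E\!\left(q^{k(x_n^\p(t)+n)}\right),
\end{equation*}
is justified for $q$-TASEP precisely because $x_n^\g(t)+n\ge0$ there, so $q^{k(x_n^\g(t)+n)}\le1$ and the moments are uniformly bounded; this is what the argument in Section~3 of \cite{BCF12} that you cite relies on. For geometric $q$-PushTASEP the particles jump \emph{left}, $x_n^\p(t+1)=x_n^\p(t)-V_n(t)-W_n(t)$ with $V_n(t)$ having unbounded $q$-geometric support, so $x_n^\p(t)+n\le0$ and $q^{k(x_n^\p(t)+n)}\ge1$. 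The $q$-moments grow so fast that they do not determine the distribution, and the series above is not absolutely convergent for general $z\in\mathbb C\setminus\R_+$. This is exactly the known ``moment problem'' obstruction for $q$-Whittaker measures that \cite{BCFV14} was written to circumvent, by proving the $q$-Laplace transform Fredholm determinant formula directly (their Theorem 3.3) rather than by summing moments. So your step 2 cannot be transferred ``verbatim'' from the $q$-TASEP case; either you must carefully truncate and resum the moment series à la \cite{BCFV14} (which is substantially more work than you indicate), or simply cite \cite{BCFV14} as the paper does. Your observations about the pole structure ($(a_iw;q)_\infty$ in the numerator of $h$ placing the encircled poles at $a_i^{-1}$, and the sign flip $\id-K\to\id+K$) are correct and do match the statement.
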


\begin{proof}
The equality in distribution
\begin{equation}
x_n^\p(t)\stackrel{\d}=-\lambda_1^{(n)}(t)-n
\end{equation}
holds for all $n\ge1$ and $t\ge0$ where $\lambda_1^{(n)}(t)$ is a marginal of the $q$-Whittaker 2d-growth model
under the $q$-Whittaker measure with a pure alpha specialization.
By subsection 7.4 in~\cite{MP17}, Theorem 3.3 in~\cite{BCFV14} provides an infinite contour Fredholm determinant formula
for the $q$-Laplace transform of the particle positions in the geometric $q$-PushTASEP.
Theorem~\ref{thm:pfinite} states the corresponding finite contour formula which is proved implicitly in~\cite{BCFV14} as follows.
Theorem 3.3 in~\cite{BCFV14} is deduced from the $q$-moment formulas of $\lambda_1^{(n)}$ given in Proposition 3.6 of~\cite{BCFV14}.
It follows from the proof of the proposition that the integration contours in these moment formulas can be replaced by small circles around the poles at $a_i^{-1}$,
hence the final Fredholm expression in Theorem 3.3 in~\cite{BCFV14} can also be written using finite contours.
We omit further details.
We mention that in the homogeneous case the finite contour formula for the geometric $q$-PushTASEP can be found explicitly in the literature.
It is the $\nu=0$ special case of the one in Conjecture 3.11 in~\cite{CMP19} about the more general $q$-Hahn PushTASEP, see the discussion before the statement of the conjecture.
The parameter $\mu$ in~\cite{CMP19} corresponds to $\alpha a$ and \eqref{pfinite} can be obtained by replacing the variable $w$ by $w/a$.
\end{proof}

The proof of Theorem~\ref{thm:pmain} is now formally very similar to that of Theorem~\ref{thm:gmain}, hence we only indicate the differences below.
First, the analogue of Lemma~\ref{lemma:gLHS} can be proven in the same way.
\begin{lemma}\label{lemma:pLHS}
Let
\begin{equation}\label{zchoicep}
z^\p_x=-q^{(f_\p-1)n-\frac{\chi_\p^{1/3}}{\log q}xn^{1/3}}.
\end{equation}
Then for the left-hand side of \eqref{pfinite} on the time scale $t=\kappa_\p n$
\begin{equation}\label{qLaplacep}
\lim_{n\to\infty}\E\left(\frac1{(z^\p_xq^{x_n^\p(\kappa_\p n)+n};q)_\infty}\right)=\lim_{n\to\infty}\P(\xi_n^\p<x).
\end{equation}
\end{lemma}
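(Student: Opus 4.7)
The plan is to mirror the proof of Lemma~\ref{lemma:gLHS} essentially verbatim, since the only difference between the two lemmas is a change of signs in the definitions \eqref{zchoicep} and \eqref{defxietap}. First I would substitute the explicit form \eqref{zchoicep} of $z^\p_x$ into $z^\p_x q^{x_n^\p(\kappa_\p n)+n}$ and simplify the exponent. Using $(f_\p-1)n + x_n^\p(\kappa_\p n) + n = f_\p n + x_n^\p(\kappa_\p n)$ together with $x_n^\p(\kappa_\p n) + f_\p n = \chi_\p^{1/3}(\log q)^{-1}\xi_n^\p n^{1/3}$ from \eqref{defxietap}, the argument of the $q$-Pochhammer symbol on the left of \eqref{qLaplacep} simplifies to
\begin{equation}
z^\p_x q^{x_n^\p(\kappa_\p n)+n}=-q^{\chi_\p^{1/3}(\log q)^{-1}(\xi_n^\p - x)n^{1/3}},
\end{equation}
so that the $q$-Laplace transform is the expectation of a deterministic function of $\xi_n^\p$ alone.

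Next I would verify that $\chi_\p>0$ under the hypotheses of Theorem~\ref{thm:pmain}. By \eqref{defchisigmap} this amounts to showing that $\kappa_\p\Psi_q''(\log_q a+\theta)+\Psi_q''(\log_q a-\theta)<0$, which should follow from the monotonicity of $\Psi_q''$ used to sign $\chi_\g$ via Lemma~\ref{lemma:psimonotonicity}. Granted this, set $C_n:=\chi_\p^{1/3}(\log q)^{-1}n^{1/3}$; then $C_n<0$ and $|C_n|\to\infty$. The deterministic map $y\mapsto 1/(-q^{C_n(y-x)};q)_\infty$ is monotone, takes values in $[0,1]$, and converges pointwise on $\R\setminus\{x\}$ to $\id_{\{y<x\}}$: on $\{y<x\}$ one has $C_n(y-x)\to+\infty$, the Pochhammer argument $-q^{C_n(y-x)}\to 0$, and the ratio tends to $1$; on $\{y>x\}$ one has $C_n(y-x)\to-\infty$, each factor $(1+q^{C_n(y-x)+i})$ blows up, and the ratio tends to $0$.

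Finally, since the function under the expectation is uniformly bounded by $1$ and monotone in $\xi_n^\p$, the same bounded convergence argument used in Section~5 of~\cite{FV13} (and already invoked for Lemma~\ref{lemma:gLHS}) converts the pointwise approximation of the indicator into the identification
\begin{equation}
\lim_{n\to\infty}\E\left(\frac{1}{(-q^{\chi_\p^{1/3}(\log q)^{-1}(\xi_n^\p - x)n^{1/3}};q)_\infty}\right)=\lim_{n\to\infty}\P(\xi_n^\p<x),
\end{equation}
which is \eqref{qLaplacep}. The only step requiring genuine new work is the verification that $\chi_\p>0$ in the allowed parameter regime; everything else is an algebraic rearrangement and a direct transcription of the geometric $q$-TASEP argument.
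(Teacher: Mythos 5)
Your proposal is correct and follows exactly the route the paper takes: the paper proves Lemma~\ref{lemma:gLHS} by substituting the scaling, reducing to $\E[1/(-q^{\chi^{1/3}(\log q)^{-1}(\xi_n-x)n^{1/3}};q)_\infty]$, citing $\chi>0$ from Lemma~\ref{lemma:psimonotonicity}, and deferring to the argument in Section~5 of \cite{FV13}; for Lemma~\ref{lemma:pLHS} the paper simply says the analogue ``can be proven in the same way,'' which is precisely what you spell out. One small remark: the verification that $\chi_\p>0$ is not really ``new work'' — it is already asserted and proved in part~3 of Lemma~\ref{lemma:psimonotonicity} (using $\Psi_q''(z)<0$ for $z>0$), so you need only cite it, exactly as the paper does for $\chi_\g$.
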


After substituting $\alpha_1=\alpha_2=\dots=a$ and $a_{m+1}=a_{m+2}=\dots=a$,
we perform the change of variables $w=q^{-W}, w'=q^{-W'}, s=W-Z$ in the Fredholm determinant on the right-hand side of \eqref{pfinite}.
This Fredholm determinant for the choice \eqref{zchoicep} is equal to the Fredholm determinant of the rescaled kernel
\begin{multline}\label{Krescaledp}
-q^{-W}\log qK^\p_{z^\p_x}(q^{-W},q^{-W'})\\
=-\frac{q^{-W}\log q}{2\pi i}\int_{\theta+i\R}\frac{\d Z}{q^{-Z}-q^{-W'}}\frac\pi{\sin(\pi(W-Z))}
\frac{e^{nf_0^\p(W)+n^{1/3}f_2^\p(W)}}{e^{nf_0^\p(Z)+n^{1/3}f_2^\p(Z)}}\frac{\Phi^\p(W)}{\Phi^\p(Z)}
\end{multline}
where
\begin{align}
f_0^\p(W)&=\kappa_\p\log(aq^W;q)_\infty-\log(aq^{-W};q)_\infty+(f_\p-1)\log q\,W,\label{deff0p}\\
f_2^\p(W)&=-\chi_\p^{1/3}xW
\end{align}
and
\begin{equation}\label{defPhip}
\Phi^\p(W)=\frac{(aq^{-W};q)_\infty^m}{\prod_{i=1}^m(a_iq^{-W};q)_\infty}.
\end{equation}
The integration contour for the kernel in \eqref{Krescaledp} for $W$ has to encircle $\log_qa$ and $\log_qa_i$ for all $i=1,\dots,m$
but no other singularities of the kernel.
We define the integration contour
\begin{equation}\label{defCtilde}
\wt\C_\theta=\left\{W(s)=\log_q(a+(q^\theta-a)e^{is}),s\in(-\pi,\pi]\right\}.
\end{equation}
The contour $\wt\C_\theta$ is the image of the circle around $a$ that passes through $q^\theta$ under the map $\log_q$.
We will choose the contour for $W$ to be $\wt\C_\theta$ and the contour for $Z$ to be $\D_\theta$ defined in \eqref{defCD} which is a vertical line.
We verify in the proof of Proposition~\ref{prop:plocalization} that no singularities of the integrand are crossed during the deformation.

Taylor expansion around $\theta$ yields that
\begin{equation}\label{f0pTaylor}
f_0^\p(W)=f_0^\p(\theta)+\frac{\chi_\p}3(W-\theta)^3+\O((W-\theta)^4).
\end{equation}
The following steep descent properties for the contours $\wt\C_\theta$ and $\D_\theta$ are the key for the asymptotics of geometric $q$-PushTASEP.

\begin{proposition}\label{prop:psteep}
Assume that the conditions of Theorem~\ref{thm:pmain} hold.
\begin{enumerate}
\item The contour $\wt\C_\theta$ is of steep descent for the function $\Re(f_0^\p)$ in the sense that the function attains its maximum $q^\theta$
which corresponds to $s=0$, it increases for $s\in(-\pi,0)$ and it decreases for $s\in(0,\pi)$.
\item The function $-\Re(f_0^\p)$ is periodic along $\D_\theta$ with period length $2\pi|\log q|$.
The period symmetric about the real axis of the contour $\D_\theta$ which corresponds to $t\in(-\pi|\log q|,\pi|\log q|]$ is of steep descent for the function $-\Re(f_0^\p)$ in the following sense.
The function attains its maximum at $q^\theta$ which corresponds to $t=0$, it increases for $t\in(-\pi|\log q|,0)$ and it decreases for $t\in(0,\pi|\log q|)$.
\end{enumerate}
\end{proposition}

We define
\begin{equation}
K^\p_{x,\delta}(W,W')
=\frac{q^{-W}\log q}{2\pi i}\int_{V_{\theta,\pi-\varphi}^\delta}\frac{\d Z}{q^{-Z}-q^{-W'}}\frac\pi{\sin(\pi(W-Z))}
\frac{e^{nf_0^\p(W)+n^{1/3}f_2^\p(W)}}{e^{nf_0^\p(Z)+n^{1/3}f_2^\p(Z)}}\frac{\Phi^\g(Z)}{\Phi^\g(W)}
\end{equation}
where $W,W'\in V_{\theta,\varphi}^\delta$ with the contour defined in \eqref{defVcont}.

\begin{proposition}\label{prop:plocalization}
For any fixed $\varepsilon>0$ small enough, there are $\delta>0$, $\varphi\in(\pi/6,\pi/2)$ and $n_0$ such that for all $n>n_0$
\begin{equation}
\left|\det(\id+K^\p_{z_x})_{L^2(C_{a_1^{-1},\dots,a_n^{-1}})}-\det(\id-K^\p_{x,\delta})_{L^2(V_{\theta,\varphi}^\delta)}\right|<\varepsilon.
\end{equation}
\end{proposition}

\begin{proof}[Proof of Proposition~\ref{prop:plocalization}]
The proof is completely analogous to that of Proposition~\ref{prop:glocalization}.
We give the detailed description of the first step below about the contour deformation.

We deform the integration contours in \eqref{Krescaledp} to $W\in\wt\C_\theta$ and $Z\in\D_\theta$ without crossing any singularity of the integrand.
Along the new contours $W\in\wt\C_\theta$ and $Z\in\D_\theta$, we have that $\Re(W)\in[\theta,\log_q(2a-q^\theta)]$ and $\Re(Z)=\theta$.
Hence in the denominator $\sin(\pi(W-Z))\neq0$ if $\theta+1>\log_q(2a-q^\theta)$ which is equivalent to \eqref{pcondpole}.
The poles at $Z=-\log_qa,-\log_qa-1,-\log_qa-2,\dots$ coming from one of the $q$-Pochhammer symbols in the denominator are negative
hence they are certainly not crossed.
The other $q$-Pochhammer symbol results in the poles at $W=\log_qa_i,\log_qa_i+1,\log_qa_i+2,\dots$ for $i=1,\dots,m$.
The poles at $\log_qa_i$ are all contained in the interval $[\log_q\ama,\log_qa]\subset[\theta,\log_qa]$
by assumption in the first case of Theorem~\ref{thm:pmain} which are encircled.
To guarantee that the remaining ones at $\log_qa_i+1,\log_qa_i+2,\dots$ are avoided the condition $\theta+1>\log_q(2a-q^\theta)$ is enough
which is the same as above and it is equivalent to \eqref{pcondpole}.

The remaining two steps of the proof about the localization to the short contours and about the deformation of short contours are identical.
The steep descent properties of Proposition~\ref{prop:psteep} justify the localization step.
The only difference in the rest of the proof compared to that of Proposition~\ref{prop:glocalization} is the replacement of $\varphi$ by $\pi-\varphi$.
That is, the short contour for $W$ is going to be $V_{0,\varphi}^\delta$ and that for $Z$ is chosen to be $V_{0,\pi-\varphi}^\delta$.
Further details are omitted here.
\end{proof}

Finally, the convergence $\det(\id-K^\p_{x,\delta})_{L^2(V_{\theta,\varphi}^\delta)}\to F_{\rm GUE}(x)$ follows
exactly in the same way as in the proof of Proposition~\ref{prop:gconvergence}.
The only difference is the sign of the cubic term in the Taylor expansion \eqref{f0gTaylor} of $f_0^\g$ compared to \eqref{f0pTaylor} that of $f_0^\p$.
The sign difference is the reason for the opposite choice of the integration contours for the variables $W$ and $Z$.
This proves the Tracy--Widom limit for the geometric $q$-PushTASEP.
The proof of the Baik--Ben Arous--P\'ech\'e limit follows exactly the same step as for the geometric $q$-TASEP.

For the finite GUE limit, we first set
\begin{equation}\label{zchoicep2}
\wt z^\p_x=-q^{(g_\p-1)n-\frac{\sigma_\p^{1/2}}{\log q}xn^{1/2}}
\end{equation}
and we define
\begin{align}
g_0^\p(W)&=\kappa_\p\log(aq^W;q)_\infty-\log(aq^{-W};q)_\infty+(g_\p-1)\log q\,W,\label{defg0p}\\
g_1^\p(W)&=-\sigma_\p^{1/2}xW.\label{defg1p}
\end{align}
Then the rescaled kernel can be written as
\begin{multline}
-q^{-W}\log qK^\p_{\wt z^\p_x}(q^{-W},q^{-W'})\\
=-\frac{q^{-W}\log q}{2\pi i}\int_{\log_q\ama+i\R}\frac{\d Z}{q^{-Z}-q^{-W'}}\frac\pi{\sin(\pi(W-Z))}
\frac{e^{ng_0^\p(W)+n^{1/2}g_1^\p(W)}}{e^{ng_0^\p(Z)+n^{1/2}g_1^\p(Z)}}\frac{\Phi^\p(W)}{\Phi^\p(Z)}.
\end{multline}
The function $g_0^\p$ has a critical point at $\log_q\ama$ and its Taylor expansion around the critical point is
\begin{equation}\label{g0Taylorp}
g_0^\p(W)=g_0^\p(\log_q\ama)-\frac{\sigma_\p}2(W-\log_q\ama)^2+\frac{\xi_\p}3(W-\log_q\ama)^3+\O((W-\log_q\ama)^4)
\end{equation}
where
\begin{equation}
\xi_\p=-\frac12\left(\kappa_\p\Psi_q''(\log_qa+\log_q\ama)+\Psi_q''(\log_qa-\log_q\ama)\right).
\end{equation}

For $\delta>0$ small enough, we define the contour
\begin{equation}\label{defCdeltatilde}
\wt\C_\theta^{\,\log_q\ama,\delta}=V_{\log_q\ama,5\pi/24}^\delta\cup\left\{W(s)=\log_q(a+(q^\theta-a)e^{is}),s\in(-\pi+\delta',\pi-\delta']\right\}
\end{equation}
for the variables $W,W'$ where $\delta'>0$ is such that the endpoints of the two parts of the contour above coincide.
The angle $5\pi/24$ is chosen to be in $(\pi/6,\pi/4)$.
The circular part of the contour $\wt\C_\theta^{\,\log_q\ama,\delta}$ coincides with $\wt\C_\theta$, see Figure~\ref{fig:contour}.

\begin{proposition}\label{prop:psteep2}
Assume that the conditions of Theorem~\ref{thm:pmain} hold.
\begin{enumerate}
\item For given $q,a,\ama$ satisfying the assumptions of Theorem~\ref{thm:pmain} there is an $\varepsilon>0$ with the following property.
For all $\theta\in(\log_q\ama,\log_q\ama+\varepsilon)$ the contour $\wt\C_\theta^{\,\log_q\ama,\delta}$ is of steep descent
for the function $\Re(g_0^\p)$ in the sense that the function attains its maximum at $\ama$ and it increases away from this point.
\item The function $-\Re(g_0^\p)$ is periodic along $\D_{\log_q\ama}$ with period length $2\pi|\log q|$.
The period symmetric about the real axis of the contour $\D_{\log_q\ama}$ which corresponds to $t\in(-\pi|\log q|,\pi|\log q|]$
is of steep descent for the function $-\Re(g_0^\p)$ in the following sense.
The function attains its maximum at $\ama$ which corresponds to $t=0$, it increases for $t\in(-\pi|\log q|,0)$ and it decreases for $t\in(0,\pi|\log q|)$.
\end{enumerate}
\end{proposition}

Again we have the convergence of the left-hand side of \eqref{pfinite}
\begin{equation}
\lim_{n\to\infty}\E\left(\frac1{(\wt z^\p_xq^{x_n^\p(\kappa_\p n)+n};q)_\infty}\right)=\lim_{n\to\infty}\P(\eta_n^\p<x).
\end{equation}
For the convergence of the right-hand side, we deform the integration paths in \eqref{pfinite} as follows.
For the variable $W$, we choose the steep descent contour $\wt\C_\theta^{\,\log_q\ama,\delta}$
locally modified in the $n^{-1/2}$ neighbourhood of the pole at $W=\log_q\ama$ in a way that it crosses the real axis to the left of the pole.
The poles at $W=\log_qa_i+1,\log_qa_i+2,\dots$ remain outside of $\wt\C_\theta^{\,\log_q\ama,\delta}$ if
\begin{equation}\label{palphacond2}
\log_q(2a-q^\theta)<\log_q\ama+1
\end{equation}
holds because $\log_qa_i\in[\log_q\ama,\log_qa]$.
The contour $\wt\C_\theta^{\,\log_q\ama,\delta}$ crosses the real axis at $\log_q(2a-q^\theta)$ which is upper bounded by $\log_q(2a-\ama)$.
Hence \eqref{palphacond2} holds if $\log_q(2a-\ama)<\log_q\ama+1$ which is equivalent to \eqref{palphacond}.

The contour for $Z$ is chosen to be $\D_{\log_q\ama-n^{-1/2}}$ so that it does not intersect the $W$ contour
and the condition $\Re(W-Z)>0$ remains valid along the contours.
Similarly to the finite GUE limit for the geometric $q$-TASEP, the contour $\D_{\log_q\ama-n^{-1/2}}$ only locally enjoys the steep descent property,
but the value of the function $-\Re(g_0^\p)$ along of the contour is smaller than in an $n^{-1/2}$ neighbourhood of $\log_q\ama$ if $n$ is large enough.

The rest of the argument for the proof of the third part of Theorem~\ref{thm:pmain} is the same as for the geometric $q$-TASEP.
In particular by using \eqref{integraltrick} with the role of $W$ and $Z$ exchanged,
we get that the Fredholm determinant with the localized contours differs by at most $\O(n^{-1/2})$ from $\det(\id-AB)$ where
\begin{equation}
A(w,\lambda)=e^{-\sigma_\p w^2/2-(\sigma_\p^{1/2}x+\lambda)w}\frac1{w^k},\qquad
B(\lambda,w)=\frac1{2\pi i}\int\frac{\d z}{z-w}\,e^{\sigma_\p z^2/2+(\sigma_\p^{1/2}x+\lambda)z}z^k
\end{equation}
with $\lambda\in\R_+$ and where the contour for $w$ is the local modification of $V_{0,5\pi/24}^{\delta n^{1/2}}$
and that for $z$ is the modification of $V_{-1,\pi/2}^{\delta n^{1/2}}$.
Finally the contours in \eqref{defHk} for the kernel $H_k$ can be obtained by a change of variables $w\to-w$ and $z\to-z$.

\section{Asymptotics for the $q$-PushASEP}
\label{s:a}

We prove Theorem~\ref{thm:amain} in this section.
The following Fredholm determinant formula for the $q$-Laplace transform of the particle position in $q$-PushASEP
first appears in~\cite{CP15} as Conjecture 1.4 and it is proved as Theorem 7.10 in~\cite{MP17}.

\begin{theorem}\label{thm:afinite}
For any $z\in\mathbb C\setminus\R_+$,
\begin{equation}\label{afinite}
\E\left(\frac1{(zq^{x_n^\a(t)+n};q)_\infty}\right)=\det\left(\id+K_z^\a\right)_{L^2(C_1)}
\end{equation}
where
\begin{equation}
K_z^\a(w,w')=\frac1{2\pi i}\int_{1/2+i\R}\frac\pi{\sin(-\pi s)}(-z)^s\frac{h(q^sw)}{h(w)}\frac1{q^sw-w'}\,\d s
\end{equation}
with
\begin{equation}
h(w)=(w;q)_\infty^n e^{tRw+tLw^{-1}}.
\end{equation}
The integration contour $C_1$ is positively oriented small circle around $1$ which does not contain any other singularity of the kernel.
\end{theorem}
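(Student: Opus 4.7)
The plan is to follow the Macdonald process framework of Borodin--Corwin combined with the two-dimensional dynamics construction of Matveev--Petrov. The starting point is a coupling of $q$-PushASEP to a Markov dynamics on interlacing triangular arrays of integers, under which the top coordinate at level $n$ is distributed according to a $q$-Whittaker measure whose Plancherel-type specializations encode both the right-jump rate $R$ and the left-jump rate $L$. This identification produces the key distributional equality $x_n^\a(t)\stackrel{\d}{=}-\lambda_1^{(n)}(t)-n$ between the $n$-th particle position of $q$-PushASEP and the first coordinate of the $n$-th row of the array at time $t$.

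With this identification in hand, the $q$-Laplace transform on the left-hand side of \eqref{afinite} is rewritten via the $q$-exponential identity
\begin{equation*}
\frac1{(z;q)_\infty}=\sum_{k=0}^\infty\frac{z^k}{(q;q)_k},
\end{equation*}
reducing the task to computing the $q$-moments $\E(q^{k\lambda_1^{(n)}(t)})$ for every $k\in\Z_{\ge0}$. These moments are accessed through the eigenrelation between $q$-Whittaker functions and a suitable family of Macdonald difference operators, which produces nested contour integral representations of the form
\begin{equation*}
\E\left(q^{k\lambda_1^{(n)}(t)}\right)=\frac{q^{k(k-1)/2}}{(2\pi\I)^k}\oint\cdots\oint\prod_{1\le i<j\le k}\frac{z_i-z_j}{z_i-qz_j}\prod_{i=1}^k\frac{h(qz_i)}{h(z_i)}\frac{\d z_i}{z_i},
\end{equation*}
with $h$ as in the statement and small positively oriented contours around $1$ that enclose all $q$-scaled copies of each other. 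The factor $e^{tRw+tLw^{-1}}$ is precisely the combination of a Plancherel and a dual-Plancherel specialization corresponding respectively to the right-jump and the left-jump (pushing) mechanisms.

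The third step converts the power series in $z$ into a single Mellin--Barnes-type integral by inserting $(-z)^s\pi/\sin(-\pi s)$ along the vertical line $1/2+\I\R$, whose residues at $s=k\in\Z_{\ge0}$ reproduce the $q$-moment expansion. After this Mellin--Barnes transformation, the nested $k$-fold integrals can be collapsed via a Cauchy-type determinantal identity into a Fredholm series, which one then recognizes as $\det(\id+K_z^\a)_{L^2(C_1)}$ with the kernel displayed in the statement. The hypothesis $z\in\mathbb C\setminus\R_+$ guarantees convergence of the $q$-series and controls $(-z)^s$ on the vertical contour, while the strong decay from $(w;q)_\infty^n$ and the Gaussian-type tails from $e^{tRw+tLw^{-1}}$ legitimize the interchange of sums and integrals.

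The main obstacle is Step~1: constructing a Markov dynamics on the two-dimensional array that simultaneously projects to $q$-PushASEP on its right edge and preserves a $q$-Whittaker measure whose specialization parameters encode both $R$ and $L$. The sequential update rules that preserve the $q$-Whittaker law under a pure Plancherel specialization recover $q$-TASEP-type dynamics and are insufficient; the long-range pushing needed to incorporate $L$ must be engineered so that its stochastic rule intertwines consistently with the Macdonald operators at neighbouring levels, which is precisely the content of the 2D dynamics introduced in~\cite{MP17}. Once this coupling is in place, the subsequent passage via $q$-moments, Mellin--Barnes, and Fredholm assembly is a standard application of the Borodin--Corwin machinery.
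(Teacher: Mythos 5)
There is a genuine gap, and it sits exactly where you declare the argument to be ``a standard application of the Borodin--Corwin machinery.'' Note first that the paper does not prove Theorem~\ref{thm:afinite} at all: it quotes the statement, which appeared as Conjecture 1.4 in~\cite{CP15} and was proved as Theorem 7.10 in~\cite{MP17}. Your Step~1 (the 2D dynamics coupling and the identification $x_n^\a(t)\stackrel{\d}=-\lambda_1^{(n)}(t)-n$) is correctly delegated to~\cite{MP17}, but your Steps~2--3 are precisely the route that is known \emph{not} to work for $q$-PushASEP. Because $L>0$ allows left jumps, $x_n^\a(t)+n$ is unbounded below (equivalently, the observable is $q^{x_n^\a(t)+n}=q^{-\lambda_1^{(n)}(t)}\ge1$, so with your sign convention you would in any case need moments of $q^{-\lambda_1^{(n)}}$, not of $q^{\lambda_1^{(n)}}$). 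These $q$-moments grow doubly exponentially in $k$: already a single free particle jumping left at rate $L$ gives $\E\bigl(q^{k(x+n)}\bigr)\sim\exp\bigl(tL(q^{-k}-1)\bigr)$, which is reflected in the factor $e^{tLw^{-1}}$ of $h$. Consequently the $q$-exponential series $\sum_k z^k\E\bigl(q^{k(x_n^\a(t)+n)}\bigr)/(q;q)_k$ has zero radius of convergence for every $z\neq0$, the moment problem is indeterminate, and neither the termwise summation nor the Mellin--Barnes resummation into a Fredholm series can be justified. Your claim that ``the hypothesis $z\in\mathbb C\setminus\R_+$ guarantees convergence of the $q$-series'' is false: the obstruction is the growth of the moments, not the location of $z$. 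This is exactly why~\cite{CP15} could derive the nested contour integral moment formulas by duality and yet only \emph{conjecture} the identity \eqref{afinite}.

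The actual proof in~\cite{MP17} avoids the divergent resummation: the 2D dynamics identifies the law of $x_n^\a(t)$ with that of (minus) the top coordinate $\lambda_1^{(n)}$ under a $q$-Whittaker measure with Plancherel-type specializations encoding $R$ and $L$, and then one invokes the Fredholm determinant formula for the $q$-Laplace transform of $\lambda_1$ under such measures (Theorem 3.3 of~\cite{BCFV14}), which is established there by a different argument rather than by summing the moments of $q^{-\lambda_1}$. If you want to salvage your outline, you must replace Steps~2--3 by a reduction to that result (or by another mechanism, e.g.\ a limit transition from a discrete-time model where the corresponding $q$-Laplace transform formula is already proved), rather than asserting the moments-to-Fredholm passage as routine.
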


Let us define the integration contour
\begin{equation}
\ol\C_\theta=\left\{W(s)=\log_q(1-(1-q^\theta)e^{is}),s\in(-\pi,\pi]\right\}.
\end{equation}
Then we choose
\begin{equation}\label{zchoicea}
z^\a_x=-q^{-(f_\a+1)n-\frac{\chi_\a^{1/3}}{\log q}xn^{1/3}}
\end{equation}
for which
\begin{equation}
\lim_{n\to\infty}\E\left(\frac1{(z^\a_xq^{x_n^\a(\kappa_\a n)+n};q)_\infty}\right)=\lim_{n\to\infty}\P(\xi_n^\a<x)
\end{equation}
holds using that $\chi_\a>0$ by Lemma~\ref{lemma:psimonotonicity}.
Then we apply the change of variables $w=q^W$, $w'=q^{W'}$, $s=Z-W$ to get the rescaled kernel
\begin{equation}
q^W\log qK^\a_{z^\a_x}(q^W,q^{W'})
=\frac{q^W\log q}{2\pi i}\int_{\theta+i\R}\frac{\d Z}{q^Z-q^{W'}}\frac\pi{\sin(\pi(Z-W))}
\frac{e^{nf_0^\a(Z)+n^{1/3}f_2^\a(Z)}}{e^{nf_0^\a(W)+n^{1/3}f_2^\a(W)}}
\end{equation}
where
\begin{align}
f_0^\a(W)&=\log(q^W;q)_\infty+R\kappa_\a q^W+L\kappa_\a q^{-\theta}-(f_\a+1)\log q\,W,\\
f_2^\a(W)&=-\chi_\a^{1/3}xW.
\end{align}
The integration contour for $W$ has to encircle $0$ and no other singularity,
but by the fact that $\theta$ is chosen to be small one can avoid crossing any pole during the deformation of the contour to $\ol\C_\theta$.
For the variable $Z$, the poles coming from the sine in the denominator are not approached if we deform its contour to $\D_\theta$.
By Taylor expansion we have that
\begin{equation}\label{f0aTaylor}
f_0^\a(W)=f_0^\a(\theta)+\frac{\chi_\a}3(W-\theta)^3+\O((W-\theta)^4)
\end{equation}
as $W\to\theta$.
The crucial part of the asymptotics is the following steep descent property along the chosen contours.

\begin{proposition}\label{prop:asteep}
Let $q\in(0,1)$ and $R,L\ge0$ be chosen as in Theorem~\ref{thm:amain}.
\begin{enumerate}
\item
There is a $\theta^*>0$ such that for all $\theta\in(0,\theta^*)$ the contour $\ol\C_\theta$ is of steep descent for the function $-\Re(f_0^\a)$
in the sense that the function attains its maximum $q^\theta$ which corresponds to $s=0$, it increases for $s\in(-\pi,0)$ and it decreases for $s\in(0,\pi)$.
\item
The function $\Re(f_0^\a)$ is periodic along $\D_\theta$ with period $2\pi|\log q|$.
The period symmetric about the real axis which corresponds to $t\in[-\pi|\log q|,\pi|\log q|]$ is of steep descent
for the function $-\Re(f_0^\g)$ in the following sense.
The function attains its maximum at $q^\theta$ which corresponds to $t=0$, it increases for $t\in(-\pi|\log q|,0)$ and it decreases for $t\in(0,\pi|\log q|)$.
\end{enumerate}
\end{proposition}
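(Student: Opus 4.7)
The plan is to verify both parts by reducing each to the one-variable analysis of $\Re f_0^\a$ along its parametrized contour, exploiting in each case the observation that this real part depends on the contour parameter only through its cosine.

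For Part~2, parametrize $Z(t) = \theta + \I t/\log q$ so that $q^{Z+k} = q^{k+\theta}e^{\I t}$. Substituting and taking real parts yields
\[
\Re f_0^\a(Z(t)) = \sum_{k=0}^\infty \tfrac12 \log\!\bigl((1-q^{k+\theta})^2 + 2q^{k+\theta}(1-\cos t)\bigr) + \kappa_\a(Rq^\theta+Lq^{-\theta})\cos t + \text{const},
\]
a function of $v = \cos t$ alone. Differentiating in $v$ gives
\[
\frac{\d}{\d v}\Re f_0^\a(Z(t)) = \kappa_\a(Rq^\theta + Lq^{-\theta}) - \sum_{k=0}^\infty \frac{q^{k+\theta}}{(1-q^{k+\theta})^2 + 2q^{k+\theta}(1-v)}.
\]
The sum is strictly increasing in $v$ and attains its supremum $\sum_k q^{k+\theta}/(1-q^{k+\theta})^2 = \Psi_q'(\theta)/(\log q)^2$ at $v=1$; by \eqref{defkappaa} this supremum equals $\kappa_\a(Rq^\theta + Lq^{-\theta})$, so the $v$-derivative is strictly positive on $(-1,1)$ and vanishes only at $v=1$. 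Composing with the strict decrease of $\cos t$ in $|t|$ over each half-period yields Part~2.

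Part~1 is considerably subtler. Parametrize $q^{W(s)} = 1-re^{\I s}$ with $r = 1-q^\theta$. A direct calculation shows that $\Re f_0^\a(W(s))$ depends on $s$ only through $u = \cos s$ and equals
\begin{align*}
F(u) = {}&\log r + \sum_{k\ge 1}\tfrac12\log\!\bigl((1-q^k)^2 + 2(1-q^k)q^k r u + q^{2k}r^2\bigr) + R\kappa_\a(1-ru)\\
&{}+ \frac{L\kappa_\a(1-ru)}{1-2ru+r^2} - \tfrac{f_\a+1}{2}\log(1-2ru+r^2).
\end{align*}
Since the proposition permits $\theta$ small, my plan is to expand every quantity in powers of $r$ using $r = |\log q|\theta + \O(\theta^2)$, together with the singular expansions $\kappa_\a = ((R+L)r^2)^{-1} + \O(r^{-1})$ and $f_\a+1 = (R-L)((R+L)r^2)^{-1} + \O(r^{-1})$, which follow from the behaviour of $\Psi_q$ near $\theta=0$ and the definitions \eqref{defkappaa}--\eqref{deffa}. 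The first and second derivative critical-point conditions $f_0^\a{}'(\theta) = f_0^\a{}''(\theta) = 0$ then force cancellation of the leading $r^{-1}$ contributions to the $r\cos s$ coefficient of $F$ and of the leading $r^{-2}$ contributions to the $r^2\cos 2s$ coefficient, leaving
\[
F(u) = \mathrm{const}(\theta) + (u-1)^2 + \O(r)
\]
uniformly on $[-1,1]$. The leading polynomial $(u-1)^2$ is strictly decreasing in $u$ on $[-1,1]$ with unique global minimum at $u=1$; translating via $u=\cos s$ gives $F(\cos s)-F(1) = 4\sin^4(s/2) + \O(r)$. Strict monotonicity of $F(\cos s)$ in $|s|$ on $(0,\pi)$ for $\theta$ sufficiently small then follows by a three-regime argument: on any fixed compact subset of $(0,\pi)$, the leading derivative $4\sin^3(s/2)\cos(s/2)$ is bounded below by a positive constant so the $\O(r)$ correction is harmless; near $s=0$ the critical point conditions force the Taylor expansion $\Re f_0^\a(W(s)) - \Re f_0^\a(\theta) = \mathcal A(\theta)\,s^4/24 + \O(s^5)$ with $\mathcal A(\theta)\to 6$ as $\theta\to 0$ (the linear, quadratic, and cubic terms vanishing by $f_0^\a{}'(\theta)=0$, $f_0^\a{}''(\theta)=0$, and the purely imaginary tangent $W'(0)$, respectively); near $s=\pi$ a parallel local argument using the same imaginary tangent gives a second-order local maximum with curvature tending to $-4$ as $\theta\to 0$. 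Symmetry $s\leftrightarrow -s$ extends the claim to the full contour.

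The main technical obstacle is the algebraic verification of the two cancellations that produce the clean leading form $(u-1)^2 + \O(r)$ for $F$. These cancellations rest on careful bookkeeping with the singular expansions of $\kappa_\a$, $f_\a+1$, and $\sum_k q^{k+\theta}/(1-q^{k+\theta})$ near $\theta = 0$; the key algebraic identity is that the respective coefficients combine to give precisely $-2/r + \O(1)$ and $1/(2r^2) + \O(r^{-1})$, independent of $R$ and $L$. Once these cancellations are in hand the rest is routine, and the resulting $\theta^* > 0$ depends only on $q$, $R$, and $L$ through the uniform bounds governing the three regimes.
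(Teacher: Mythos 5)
Your Part~2 argument is complete, correct, and slightly cleaner than the paper's.  The paper computes $\Re\frac{\d}{\d t}f_0^\a$ via Lemma~\ref{lemma:contourderivatives}, rewrites it in terms of the functions $g$ and $h$ of~\eqref{defg}--\eqref{defh}, and invokes the coefficient identity following from~\eqref{defkappaa} together with $h(q^{\theta+k},t)\le h(0,t)$.  You instead observe that $\Re f_0^\a(Z(t))$ depends on $t$ only through $v=\cos t$, differentiate once in $v$, and compare the resulting decreasing sum with $\kappa_\a(Rq^\theta+Lq^{-\theta})=\Psi_q'(\theta)/(\log q)^2$ (via~\eqref{psi'series}) which is exactly its value at $v=1$.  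This is mathematically the same cancellation, reached with less scaffolding.

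Your Part~1 is a genuinely different route from the paper's, but it is a plan rather than a proof, and you say so yourself.  The paper differentiates $\Re f_0^\a$ along $\ol\C_\theta$, converts via the $h$- and $e$-functions of~\eqref{defh} and~\eqref{defe}, cancels the leading Fourier mode using the double-critical-point identity, factors out $(1-\cos s)\sin s$, and shows the residual factor converges uniformly to $-2\,e(1-q^\theta,s)\le-2$ as $\theta\to0$.  You instead work with the undifferentiated quantity $F(u)=\Re f_0^\a(W(s))$, $u=\cos s$, and propose $F(u)=\mathrm{const}+(u-1)^2+\O(r)$, which is a pleasingly geometric way to package the same $\theta\to0$ degeneration.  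The approach is sound, but the key algebraic cancellation is asserted, not performed, and you label it explicitly as ``the main technical obstacle.''  Two remarks would tighten the plan considerably.  First, the vanishing of the $(u-1)$-coefficient of $F$ at $u=1$ is not merely an asymptotic-in-$r$ cancellation: since $W'(0)$ is purely imaginary and $f_0^{\a\prime}(\theta)=f_0^{\a\prime\prime}(\theta)=0$, the chain rule gives $\frac{\d^2}{\d s^2}\Re f_0^\a(W(s))\big|_{s=0}=-F'(1)=0$ identically in $\theta$, so $F'(1)=0$ exactly — stronger and cleaner than tracking leading orders in $r$.  Second, the three-regime monotonicity argument tacitly requires the $\O(r)$ correction to $F$ to control $F'$, not just $F$; this does follow because $F$ is analytic on a complex neighbourhood of $[-1,1]$ whose size grows like $r^{-1}$ (the nearest singularities sit at $u\sim(2r)^{-1}$), so uniform $\O(r)$ bounds on $F$ propagate to its derivatives by Cauchy estimates.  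That step should be stated.  With those two points made explicit, and with the $u^2$-coefficient computation $F''(1)/2=1+\O(r)$ actually carried through (it does come out to $1$: the pieces $\tfrac{R-L}{R+L}$ from the $\log$ term and $\tfrac{2L}{R+L}$ from the $L$-term add to $1$), the plan would turn into a proof.
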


Then the asymptotic analysis follows exactly the same steps as in the case of geometric $q$-TASEP:
localization of the integration to the neighbourhood of the double critical point of $f_0^\a$ at $\theta$,
Taylor expansion and reformulation of the kernel using the identity $\det(\id-AB)=\det(\id-BA)$.
We omit further details here and conclude Theorem~\ref{thm:amain}.

\section{Steep descent contours}
\label{s:steep}

This section contains the proofs of all steep descent property statements in this paper.
We define the function
\begin{equation}\label{defg}
g(b,s)=\frac{b\sin s}{1+b^2-2b\cos s}
\end{equation}
as in~\cite{Vet15} which will appear frequently in derivatives below.
The function has the $g(1/b,s)=g(b,s)$ symmetry in the first variable.
It has a certain monotonicity property in the first variable.
To state it, let
\begin{equation}\label{defh}
h(b,s)=\frac{(1-b)^2}bg(b,s)=\frac{(1-b)^2\sin s}{1+b^2-2b\cos s}
\end{equation}
be the rescaled version of $g(b,s)$ by its derivative $\frac{\d}{\d s}g(b,s)|_{s=0}=\frac b{(1-b)^2}$.
The right-hand side of \eqref{defh} is defined for $b=0$ as well.
Note that the $b\leftrightarrow1/b$ symmetry in the first variable is inherited to $h$, that is, $h(1/b,s)=h(b,s)$.

\begin{lemma}[Lemma 6.5 in~\cite{Vet15}]\label{lemma:hcompare}
The function $b\mapsto h(b,s)$ is non-increasing for $b\in[-1,1]$ and $0\le s\le\pi$,
that is, if $-1\le b\le c\le1$ and $0\le s\le\pi$, then $h(b,s)\ge h(c,s)$.
\end{lemma}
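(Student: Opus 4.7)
The plan is to reduce the claim to a one-variable calculus exercise by fixing $s$ and differentiating $h(\cdot,s)$. First I would dispose of the degenerate cases $s=0$ and $s=\pi$. In both cases $\sin s = 0$, so either $h(b,s)\equiv 0$ in $b$ (which is trivially non-increasing), or, at the isolated point $(b,s)=(-1,\pi)$ where both numerator and denominator vanish, one uses continuity of $h$ (note $1+b^2-2b\cos s=(1+b)^2$ when $s=\pi$, and the numerator $(1-b)^2\sin s$ already has a factor $\sin s$ vanishing to first order while $(1+b)^2\to 0$ only as $b\to-1$; a short limit computation shows $h$ extends by $0$). So from now on fix $s\in(0,\pi)$, on which $\sin s>0$ and $1+b^2-2b\cos s=|1-be^{is}|^2>0$ for all $b\in[-1,1]$.

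Next I would compute $\partial_b h(b,s)$ directly. Writing $N(b)=(1-b)^2$ and $D(b)=1+b^2-2b\cos s$, the quotient rule gives
\begin{equation*}
\partial_b h(b,s) \;=\; \sin s\cdot\frac{N'(b)D(b)-N(b)D'(b)}{D(b)^2},
\end{equation*}
with $N'(b)=-2(1-b)$ and $D'(b)=2b-2\cos s$. The key algebraic step is to factor the numerator. Pulling out $-2(1-b)$ one gets
\begin{equation*}
N'D-ND' \;=\; -2(1-b)\bigl[(1+b^2-2b\cos s)+(1-b)(b-\cos s)\bigr],
\end{equation*}
and expanding the bracket collapses to $(1+b)(1-\cos s)$. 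This yields the clean expression
\begin{equation*}
\partial_b h(b,s) \;=\; -\,\frac{2(1-b)(1+b)(1-\cos s)\sin s}{(1+b^2-2b\cos s)^2}.
\end{equation*}

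Finally I would read off the sign. For $b\in[-1,1]$ we have $(1-b)\ge 0$ and $(1+b)\ge 0$; for $s\in(0,\pi)$ we have $\sin s>0$ and $1-\cos s>0$; the denominator is positive. Hence $\partial_b h(b,s)\le 0$ on $[-1,1]$, so $b\mapsto h(b,s)$ is non-increasing there, proving the claim. There is no real obstacle in this argument; the only mildly delicate point is the algebraic simplification of the numerator to the factored form $(1-b)(1+b)(1-\cos s)$, which makes the sign manifest — the symmetry $h(1/b,s)=h(b,s)$ noted in the paper is a consistency check, since it swaps the factor $(1-b)$ with $(1-1/b)=-(1-b)/b$ and the denominator transforms compatibly.
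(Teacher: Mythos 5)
Your proof is correct. The paper itself does not prove this lemma but only cites Lemma~6.5 of~\cite{Vet15}, so there is no in-paper argument to compare against; your direct differentiation is a clean self-contained proof. The key algebraic collapse is right: writing $N(b)=(1-b)^2$, $D(b)=1+b^2-2b\cos s$, one indeed gets $N'D-ND'=-2(1-b)\bigl[(1+b^2-2b\cos s)+(1-b)(b-\cos s)\bigr]=-2(1-b)(1+b)(1-\cos s)$, so
\begin{equation*}
\partial_b h(b,s)=-\frac{2(1-b^2)(1-\cos s)\sin s}{(1+b^2-2b\cos s)^2}\le 0\qquad\text{for }b\in[-1,1],\ s\in(0,\pi),
\end{equation*}
and the degenerate cases $s\in\{0,\pi\}$ are trivial since $h(\cdot,s)\equiv 0$ there (away from the single undefined point $(b,s)=(-1,\pi)$, which is immaterial for monotonicity in $b$ at fixed $s$). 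Your closing remark about the $h(1/b,s)=h(b,s)$ symmetry is a nice sanity check but not needed.
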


The function $h(b,s)$ is defined in a way to give sharp bounds around $s=0$ on the derivative of the function $\Re(f_0^\g)$
which appear in the geometric $q$-TASEP.
To show the required steep descent property in Proposition~\ref{prop:gsteep}, the use of $h(b,s)$ suffices.
In the case of the geometric $q$-PushTASEP these bounds are not enough to prove Proposition~\ref{prop:psteep} about $\Re(f_0^\p)$,
hence we introduce and use the following function which describes the behaviour of the derivative around $s=\pi$.
Let
\begin{equation}\label{defe}
e(b,s)=\frac{(1+b)^2}{1+b^2-2b\cos s}
\end{equation}
with which one can write
\begin{equation}\label{hwithe}
h(b,s)=\left(1-\frac{2b(1-\cos s)}{(1+b)^2}e(b,s)\right)\sin s.
\end{equation}

\begin{lemma}\label{lemma:eproperties}
The function $b\mapsto e(b,s)$ is non-decreasing for $b\in[-1,1]$.
Further for all $s\in\R$, $((1+b)/(1-b))^2\le e(b,s)\le1$ holds for $b\le0$ and $1\le e(b,s)\le((1+b)/(1-b))^2$ for $b\ge0$.
\end{lemma}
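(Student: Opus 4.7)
The plan is to prove both claims by a single short calculation: first differentiate $e(b,s)$ in $b$ to obtain the monotonicity, and then optimize over $s$ at fixed $b$ to get the two-sided bounds. Using the definition \eqref{defe} and the quotient rule with $D=1+b^2-2b\cos s$, I would compute
\begin{equation*}
\partial_b e(b,s)=\frac{2(1+b)\bigl[D-(1+b)(b-\cos s)\bigr]}{D^2}
\end{equation*}
and then reduce the bracket by means of the identity
\begin{equation*}
D-(1+b)(b-\cos s)=(1-b)(1+\cos s),
\end{equation*}
which follows from direct expansion. Substituting yields
\begin{equation*}
\partial_b e(b,s)=\frac{2(1-b^2)(1+\cos s)}{(1+b^2-2b\cos s)^2},
\end{equation*}
whose non-negativity for $b\in[-1,1]$ and $s\in\R$ gives the monotonicity statement.

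For the two-sided bounds, I would regard $e(b,\cdot)$ as a function of $s$ at fixed $b$. Since the numerator $(1+b)^2$ is independent of $s$, the extrema of $e(b,\cdot)$ are governed by those of $D(s)=1+b^2-2b\cos s$. As $\cos s$ ranges over $[-1,1]$, $D$ takes values in the interval $[(1-|b|)^2,(1+|b|)^2]$, with the minimum attained at $\cos s=\sgn(b)$ and the maximum at $\cos s=-\sgn(b)$. Substituting these endpoints, for $b\ge 0$ one gets $e(b,s)\in[1,((1+b)/(1-b))^2]$, while for $b\le 0$ the roles of the endpoints swap, yielding $e(b,s)\in[((1+b)/(1-b))^2,1]$. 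These are precisely the stated bounds.

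The argument is essentially a one-line calculation followed by a comparison, so no serious obstacle is expected; the only care needed is in verifying the algebraic identity for the numerator of $\partial_b e$, after which both the monotonicity and the bounds drop out automatically.
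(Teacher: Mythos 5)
Your proof is correct and follows essentially the same route as the paper: compute $\partial_b e(b,s)=\frac{2(1-b^2)(1+\cos s)}{(1+b^2-2b\cos s)^2}\ge0$ for the monotonicity, and then bound the denominator $D(s)=1+b^2-2b\cos s$ between $(1-|b|)^2$ and $(1+|b|)^2$ (at $\cos s=\pm\sgn(b)$) to obtain the two-sided bounds, splitting into the cases $b\le0$ and $b\ge0$.
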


\begin{proof}
The monotonicity follows by computing the derivative
\begin{equation}
\frac{\d}{\d b}e(b,s)=\frac{2(1-b^2)(1+\cos s)}{(1+b^2-2b\cos s)^2}
\end{equation}
which is non-negative for $b\in[-1,1]$.
If $b\le0$, then we can upper and lower bound the denominator in \eqref{defe} as $1+b^2+2b\le1+b^2-2b\cos s\le1+b^2-2b$ which yields the desired bounds.
For $b\ge0$, the role of the upper and lower bounds is exchanged.
\end{proof}

In order to determine the derivative of the function $\Re(f_0^\g)$ along the contours in Proposition~\ref{prop:gsteep}, we compute the following derivatives.

\begin{lemma}\label{lemma:contourderivatives}
Let $c,r,\gamma\in\R$ be arbitrary.
Then
\begin{align}
\Re\frac{\d}{\d s}\log\left(c+re^{is}\right)&=g\left(-\frac rc,s\right),\label{condourderivative1}\\
\Re\frac{\d}{\d s}\log\left(\gamma(c+re^{is});q\right)_\infty
&=\sum_{k=0}^\infty g\left(\frac{\gamma rq^k}{1-\gamma cq^k},s\right).\label{condourderivative2}
\end{align}
\end{lemma}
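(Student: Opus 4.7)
The plan is to verify both identities by direct computation, the second essentially reducing to the first applied termwise.

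For \eqref{condourderivative1}, I would compute
\[
\frac{\d}{\d s}\log(c+re^{is})=\frac{ire^{is}}{c+re^{is}},
\]
multiply numerator and denominator by the conjugate $c+re^{-is}$, and extract the real part. Since $(c+re^{is})(c+re^{-is})=c^2+r^2+2cr\cos s$ and $ire^{is}(c+re^{-is})=ir^2+irce^{is}$, whose real part is $-rc\sin s$, I obtain
\[
\Re\frac{\d}{\d s}\log(c+re^{is})=\frac{-rc\sin s}{c^2+r^2+2cr\cos s}.
\]
Dividing the numerator and denominator by $c^2$ and comparing with \eqref{defg} gives exactly $g(-r/c,s)$, as desired.

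For \eqref{condourderivative2}, I would use the product definition \eqref{defqPochhammer} of the $q$-Pochhammer symbol to write
\[
\log\bigl(\gamma(c+re^{is});q\bigr)_\infty=\sum_{k=0}^\infty\log\bigl(1-\gamma q^k(c+re^{is})\bigr),
\]
and then differentiate term by term. For each $k$ I would factor
\[
1-\gamma q^k(c+re^{is})=(1-\gamma cq^k)\left(1-\frac{\gamma rq^k}{1-\gamma cq^k}\,e^{is}\right),
\]
so that the $k$-th summand equals a constant (in $s$) plus $\log(1-B_k e^{is})$ with $B_k=\gamma rq^k/(1-\gamma cq^k)$. The identity \eqref{condourderivative1}, applied with $c=1$ and $r=-B_k$, then gives
\[
\Re\frac{\d}{\d s}\log(1-B_ke^{is})=g(B_k,s),
\]
which assembles into the claimed sum.

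The only genuine technical point is the interchange of the derivative with the infinite sum. I would justify it by noting that, away from the discrete set of $s$ where $\gamma q^k(c+re^{is})=1$ for some $k$, the factor $1/(1-\gamma c q^k)$ is uniformly bounded in a neighbourhood once $k$ is large enough, and the resulting summand $g(B_k,s)$ is then of size $\O(q^k)$ uniformly. This gives uniform convergence of the differentiated series on compact subsets, so termwise differentiation is valid. This is routine and I do not expect any substantive obstacle.
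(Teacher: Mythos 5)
Your proof is correct and follows essentially the same direct computation as the paper: for \eqref{condourderivative1} the algebra is identical, and for \eqref{condourderivative2} your factoring out $(1-\gamma cq^k)$ and invoking \eqref{condourderivative1} with $c=1$, $r=-B_k$ is the same move (up to cosmetics) as the paper's application of \eqref{condourderivative1} with $c\mapsto 1-\gamma cq^k$ and $r\mapsto -\gamma rq^k$. Your additional remark on justifying termwise differentiation via uniform convergence of the differentiated series is a detail the paper leaves implicit; the justification is sound since $B_k=\O(q^k)$ so $g(B_k,s)=\O(q^k)$ uniformly on compacta avoiding the finitely many singular $s$ from small $k$.
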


\begin{proof}[Proof of Lemma~\ref{lemma:contourderivatives}]
By elementary computations
\begin{equation}
\frac{\d}{\d s}\log\left(c+re^{is}\right)=\frac{irce^{is}+ir^2}{(c+r\cos s)^2+(r\sin s)^2}
=\frac{-\frac rc\sin s+i\frac rc\cos s+i\frac{r^2}{c^2}}{1+\frac{r^2}{c^2}+2\frac rc\cos s}
\end{equation}
where taking real parts gives \eqref{condourderivative1}.
By the definition of the $q$-Pochhammer symbol \eqref{defqPochhammer}
\begin{equation}
\Re\frac{\d}{\d s}\log\left(\gamma(c+re^{is});q\right)_\infty
=\Re\frac{\d}{\d s}\sum_{k=0}^\infty\log\left(1-\gamma cq^k-\gamma rq^ke^{is}\right)
\end{equation}
which yields \eqref{condourderivative2} by applying \eqref{condourderivative1} with $c$ replaced by $1-\gamma cq^k$ and $r$ by $-\gamma rq^k$.
\end{proof}

We give the following series expansion of the $q$-digamma function and that of its derivative which appears in the scaling constants of our models:
\begin{align}
\Psi_q(z)&=-\log(1-q)+\log q\sum_{k=0}^\infty\frac{q^{z+k}}{1-q^{z+k}},\label{psiseries}\\
\Psi_q'(z)&=\left(\log q\right)^2\sum_{k=0}^\infty\frac{q^{z+k}}{(1-q^{z+k})^2}.\label{psi'series}
\end{align}

\begin{lemma}\label{lemma:psimonotonicity}
\begin{enumerate}
\item The function $z\mapsto\Psi_q''(z)/\Psi_q'(z)$ is increasing for positive $z$.
\item The function $\theta\mapsto\kappa_\g(\theta)$ in \eqref{defkappag} is decreasing
whereas $\theta\mapsto\kappa_\p(\theta)$ in \eqref{defkappap} is increasing.
\item For the coefficients in \eqref{defchisigmag} and in \eqref{defchisigmap},
we have that $\chi_\g,\chi_\p,\chi_\a>0$ and $\sigma_\g>0$ for $\theta<\log_q\ami$ and $\sigma_\p>0$ for $\theta>\log_q\ama$.
\end{enumerate}
\end{lemma}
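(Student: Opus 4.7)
The plan is to handle the three parts sequentially, with Part 1 providing the key monotonicity property that drives most of Parts 2 and 3.

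For Part 1, the claim that $\Psi_q''/\Psi_q'$ is increasing is equivalent to $z\mapsto\log\Psi_q'(z)$ being convex on $(0,\infty)$, i.e., to log-convexity of $\Psi_q'$. Each summand $u_k(z)=q^{z+k}/(1-q^{z+k})^2$ in the series \eqref{psi'series} is itself log-convex: $\log u_k(z)=(z+k)\log q-2\log(1-q^{z+k})$, and a direct computation gives $\frac{d^2}{dz^2}\log(1-q^{z+k})=-(\log q)^2 q^{z+k}/(1-q^{z+k})^2$, so the second derivative of $\log u_k$ equals $2(\log q)^2 q^{z+k}/(1-q^{z+k})^2>0$. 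The standard fact that a sum of log-convex functions is log-convex (a consequence of H\"older's inequality) then yields log-convexity of $\Psi_q'$.

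Setting $F(z)=\log\Psi_q'(z)$, Part 2 reduces to sign bookkeeping. Logarithmic differentiation gives $\frac{d}{d\theta}\log\kappa_\g(\theta)=F'(\theta-\log_qa)-F'(\theta+\log_qa)$, which is negative since $\log_qa>0$ and $F'$ is increasing by Part 1. Similarly $\frac{d}{d\theta}\log\kappa_\p(\theta)=-F'(\log_qa-\theta)-F'(\log_qa+\theta)$; here I would observe that $\Psi_q'(z)>0$ and (by differentiating its series once more) $\Psi_q''(z)<0$ for $z>0$ when $q\in(0,1)$, so $F'(z)<0$ on the positive axis, making both terms positive.

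Turning to Part 3, substituting $\kappa_\g=\Psi_q'(\theta-\log_qa)/\Psi_q'(\theta+\log_qa)$ into \eqref{defchisigmag} rewrites $\chi_\g$ as $\frac{1}{2}\Psi_q'(\theta-\log_qa)[F'(\theta+\log_qa)-F'(\theta-\log_qa)]$, positive by Part 1. For $\chi_\p$, the bracket in \eqref{defchisigmap} is a sum of two negative quantities ($\kappa_\p>0$ times $\Psi_q''<0$, plus another $\Psi_q''<0$), so $-\frac{1}{2}$ times it is positive. For $\sigma_\g$, direct substitution shows $\sigma_\g=0$ at $\theta=\log_q\alpha$ (the value of $\kappa_\g$ there cancels against the second term in \eqref{defchisigmag}); since $\sigma_\g$ depends on $\theta$ only through $\kappa_\g$, with positive coefficient $\Psi_q'(\log_q\alpha+\log_qa)$, and Part 2 gives $\kappa_\g(\theta)>\kappa_\g(\log_q\alpha)$ for $\theta<\log_q\alpha$, we deduce $\sigma_\g>0$. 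The argument for $\sigma_\p$ is the mirror image, using the opposite monotonicity of $\kappa_\p$ and the range $\theta>\log_q\wt\alpha$.

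Finally, positivity of $\chi_\a$ requires a separate asymptotic argument, as there is no general-$\theta$ algebraic identity making the sign obvious. The dominant $k=0$ terms of the series yield $\Psi_q'(\theta)\sim\theta^{-2}$ and $\Psi_q''(\theta)\sim-2\theta^{-3}$ as $\theta\to 0^+$. Substituting $\kappa_\a=\Psi_q'(\theta)/((\log q)^2(Rq^\theta+Lq^{-\theta}))$, the first term inside \eqref{defchia} becomes $(\log q)\Psi_q'(\theta)(Rq^\theta-Lq^{-\theta})/(Rq^\theta+Lq^{-\theta})$, whose absolute value is bounded by $|\log q|\Psi_q'(\theta)=\mathcal{O}(\theta^{-2})$, while $-\Psi_q''(\theta)\sim 2\theta^{-3}\to+\infty$ cubically dominates. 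Hence $\chi_\a>0$ for all $\theta\in(0,\theta^*)$ provided $\theta^*>0$ is chosen small enough.

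The main obstacle is Part 1 together with the $\chi_\a$ case of Part 3: the former hinges on the non-trivial but standard sum-of-log-convex principle, while the latter requires care in matching the asymptotic orders of the two competing terms in \eqref{defchia}.
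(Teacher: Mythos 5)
Your proof is correct and, for the most interesting point (Part 1), takes a genuinely different route from the paper. You reduce the monotonicity of $\Psi_q''/\Psi_q'$ to the log-convexity of $\Psi_q'$, verify log-convexity of each summand $q^{z+k}/(1-q^{z+k})^2$ by a direct second-derivative computation, and invoke the standard fact that a sum of log-convex functions is log-convex. The paper instead uses the alternate series representation $\Psi_q^{(k)}(z)=(\log q)^{k+1}\sum_{n\ge1} n^k q^{nz}/(1-q^n)$ and proves the equivalent inequality $\Psi_q'''\Psi_q'>(\Psi_q'')^2$ by comparing coefficients of the power series in $q^z$, ultimately via a symmetrization showing $\frac{n^2+m^2}2\ge nm$. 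Both arguments are H\"older/Cauchy--Schwarz in disguise, but your framing as log-convexity of a sum of log-convex functions is cleaner and exhibits the statement as an instance of a standard principle, whereas the paper's is more computational. Your Parts 2 and 3 (except $\chi_\a$) match the paper's argument essentially step for step.

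One place where your proof is strictly weaker is the positivity of $\chi_\a$. Your asymptotic argument ($-\Psi_q''(\theta)\sim 2\theta^{-3}$ dominates the $\mathcal{O}(\theta^{-2})$ competing term as $\theta\to 0^+$) establishes $\chi_\a>0$ only for $\theta$ sufficiently small, which is enough for the application in Theorem~\ref{thm:amain} but does not prove the lemma as stated. The paper instead gives a global argument valid for \emph{all} $\theta>0$: after dividing by $\Psi_q'(\theta)>0$, positivity of $\chi_\a$ is equivalent to
\begin{equation*}
\log q\,\frac{Rq^\theta-Lq^{-\theta}}{Rq^\theta+Lq^{-\theta}}>\frac{\Psi_q''(\theta)}{\Psi_q'(\theta)},
\end{equation*}
and since the left-hand side is at least $\log q$ (the ratio is in $[-1,1]$ and $\log q<0$) while the right-hand side is increasing in $\theta$ (by Part 1) with limit $\log q$ as $\theta\to\infty$, the inequality holds strictly for every finite $\theta>0$. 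Note that this argument actually uses Part 1 again, so the paper makes the monotonicity of $\Psi_q''/\Psi_q'$ do more work; it is worth adopting this more uniform treatment rather than a separate asymptotic estimate.
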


\begin{proof}[Proof of Lemma~\ref{lemma:psimonotonicity}]
\begin{enumerate}
\item
We can essentially repeat a part of the proof of Lemma 4.3 in~\cite{BC16}.
Formula (9) in~\cite{BC16} is the following series representation for the derivatives of the $q$-digamma function:
\begin{equation}\label{psiseriesGB}
\Psi_q^{(k)}(z)=(\log q)^{k+1}\sum_{n=1}^\infty\frac{n^kq^{nz}}{1-q^n}.
\end{equation}
The monotonicity of $z\mapsto\Psi_q''(z)/\Psi_q'(z)$, by checking the derivative, is equivalent to the inequality
\begin{equation}\label{psiderivativeineq}
\Psi_q'''(z)\Psi_q'(z)>\left(\Psi_q''(z)\right)^2
\end{equation}
which, by using the series expansion \eqref{psiseriesGB}, can be written as
\begin{equation}
\sum_{n,m\ge1}\frac{n^3q^{nz}}{1-q^n}\frac{mq^{mz}}{1-q^m}>\sum_{n,m\ge1}\frac{n^2q^{nz}}{1-q^n}\frac{m^2q^{mz}}{1-q^m}.
\end{equation}
If one regards both sides as a power series in $q^z$, then the above inequality certainly holds if the following inequality is true for the $k$th coefficients:
\begin{equation}\label{compareseries}
\sum_{n=1}^{k-1}\frac{n(k-n)}{(1-q^n)(1-q^{k-n})}\frac{n^2+(k-n)^2}2\ge\sum_{n=1}^{k-1}\frac{n(k-n)}{(1-q^n)(1-q^{k-n})}n(k-n)
\end{equation}
which is obtained by symmetrization on the left-hand side.
Since \eqref{compareseries} holds with strict inequality for $k\ge3$, we also have \eqref{psiderivativeineq}
which yields the monotonicity for the $q$-polygamma ratio.

\item
For the derivative
\begin{equation}
\frac{\d}{\d\theta}\kappa_\g
=\frac{\Psi_q''(\theta-\log_qa)\Psi_q'(\theta)-\Psi_q'(\theta-\log_qa)\Psi_q''(\theta)}{(\Psi_q'(\theta))^2}<0
\end{equation}
holds if and only if
\begin{equation}\label{psithetalogaineq}
\frac{\Psi_q''(\theta-\log_qa)}{\Psi_q'(\theta-\log_qa)}<\frac{\Psi_q''(\theta)}{\Psi_q'(\theta)}.
\end{equation}
This however follows from the first statement of this lemma.
Since
\begin{equation}
\frac{\d}{\d\theta}\kappa_\p
=\frac{-\Psi_q''(\log_qa-\theta)\Psi_q'(\log_qa+\theta)-\Psi_q'(\log_qa-\theta)\Psi_q''(\log_qa+\theta)}{(\Psi_q'(\log_qa+\theta))^2}
\end{equation}
for the monotonicity of $\theta\mapsto\kappa_\p(\theta)$ it is enough to use the information about the sign of the $q$-polygamma functions:
$\Psi_q'(z)>0$ and $\Psi_q''(z)<0$ for any $z>0$ and $q\in(0,1)$.

\item
The positivity of $\chi_\g$ follows by comparing the increasing function $\Psi_q''(z)/\Psi_q'(z)$ at $z=\theta-\log_qa$ and at $z=\theta+\log_qa$.
For that of $\chi_\p$ we can use that $\Psi_q''(z)<0$ for $z>0$ and $q\in(0,1)$.

The positivity of $\chi_\a$ can be written equivalently as
\begin{equation}\label{RLPsi}
\log q\,\frac{Rq^\theta-Lq^{-\theta}}{Rq^\theta+Lq^{-\theta}}>\frac{\Psi_q''(\theta)}{\Psi_q'(\theta)}.
\end{equation}
The ratio on the left-hand side $(Rq^\theta-Lq^{-\theta})/(Rq^\theta+Lq^{-\theta})$ is between $-1$ and $1$ for any value of $R,L,\theta\ge0$.
Hence the left-hand side of \eqref{RLPsi} is at least $\log q$.
The right-hand side of \eqref{RLPsi} is however an increasing function of $\theta$ growing
to the limit $\lim_{\theta\to\infty}\Psi_q''(\theta)/\Psi_q'(\theta)=\log q$.
This proves \eqref{RLPsi} and that $\chi_\a>0$.

The positivity of $\sigma_\g$ is equivalent to $\kappa_\g(\theta)>\kappa_\g(\log_q\ami)$
which follows from the monotonicity of $\theta\mapsto\kappa_\g(\theta)$.
Similarly, $\sigma_\p>0$ if and only if $\kappa_\p(\theta)>\kappa_\p(\log_q\ama)$
which is the consequence of the monotonicity of $\theta\mapsto\kappa_\p(\theta)$.
\end{enumerate}
\end{proof}

\begin{lemma}\label{lemma:dominance}
Let $p_0,p_1,p_2,\dots$ and $q_0,q_1,q_2,\dots$ be positive probabilities,
i.e.\ we assume that $\sum_{k=0}^\infty p_k=\sum_{k=0}^\infty q_k=1$.
Suppose further that the inequalities
\begin{equation}\label{decaycompare}
\frac{p_{k+1}}{p_k}\ge\frac{q_{k+1}}{q_k}
\end{equation}
hold for all $k=0,1,2,\dots$.
Then the distribution $(p_k)$ stochastically dominates $(q_k)$, that is, for all $j=0,1,2,\dots$, we have the inequality of tails
\begin{equation}\label{dominance}
\sum_{k=j}^\infty p_k\ge\sum_{k=j}^\infty q_k.
\end{equation}
As a consequence, if $0\le f_0\le f_1\le f_2\le\dots$ is a non-negative increasing function, then for the expectations
\begin{equation}\label{dominanceexpectation}
\sum_{k=0}^\infty f_kp_k\ge\sum_{k=0}^\infty f_kq_k
\end{equation}
follows.
\end{lemma}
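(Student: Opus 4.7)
The plan is to recognize \eqref{decaycompare} as a monotone likelihood ratio condition: setting $r_k=p_k/q_k$ (well defined since the probabilities are assumed positive), the hypothesis is exactly that $r_k$ is non-decreasing in $k$. Because $\sum_k p_k=\sum_k q_k=1$, the ratio $r_k$ cannot stay strictly below $1$ for every $k$, nor strictly above $1$ for every $k$. Hence either $p_k\equiv q_k$ (in which case both claims are trivial) or there is a threshold $k^\star\ge1$ such that $r_k<1$ for $k<k^\star$ and $r_k\ge1$ for $k\ge k^\star$; equivalently, $p_k-q_k$ is negative below $k^\star$ and non-negative from $k^\star$ onwards.

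I would then derive \eqref{dominance} from this single-crossing structure in two cases. If $j\ge k^\star$, every summand in $\sum_{k\ge j}(p_k-q_k)$ is non-negative and the inequality is immediate. If $j<k^\star$, I would use $\sum_{k=0}^\infty(p_k-q_k)=0$ to rewrite
\[
\sum_{k\ge j}p_k-\sum_{k\ge j}q_k=\sum_{k=0}^{j-1}(q_k-p_k),
\]
and every term on the right is non-negative because all indices $k<j\le k^\star$ lie below the threshold.

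For the expectation inequality \eqref{dominanceexpectation} the plan is a standard Abel rearrangement. Since $f_k$ is non-negative and non-decreasing, write
\[
f_k=f_0+\sum_{j=1}^\infty(f_j-f_{j-1})\,\id[k\ge j],
\]
with each increment non-negative. Substituting into $\sum_k f_k p_k$ and $\sum_k f_k q_k$ separately and exchanging the order of summation by Tonelli (legitimate because every quantity in sight is non-negative) yields
\[
\sum_k f_k p_k=f_0+\sum_{j=1}^\infty(f_j-f_{j-1})\sum_{k\ge j}p_k,
\]
and the analogous formula with $q$ in place of $p$. A term-by-term comparison of the two series using \eqref{dominance} gives \eqref{dominanceexpectation}, interpreted as an inequality between extended non-negative sums, so no summability hypothesis on $f$ is required.

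There is no genuine obstacle in this argument; the only point of mild care is to keep every manipulation within non-negative sums so that Tonelli and monotone convergence apply unconditionally. The main conceptual content is the single-crossing observation, which is what converts the multiplicative hypothesis \eqref{decaycompare} into the sign control on $p_k-q_k$ that powers both halves of the lemma.
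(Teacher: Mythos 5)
Your proof of \eqref{dominanceexpectation} from \eqref{dominance} is essentially identical to the paper's: both use the telescoping decomposition $f_k=f_0+\sum_{j\ge1}(f_j-f_{j-1})$ and an interchange of summation justified by non-negativity. For \eqref{dominance} itself, however, you take a genuinely different route. The paper argues by induction on $j$: starting from $\sum_{k\ge j}p_k/p_j=1+\frac{p_{j+1}}{p_j}+\frac{p_{j+2}}{p_{j+1}}\frac{p_{j+1}}{p_j}+\cdots\ge\sum_{k\ge j}q_k/q_j$, it applies the increasing map $x\mapsto1-1/x$ to pass from a ratio comparison at level $j$ to one at level $j+1$, then feeds in the induction hypothesis. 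Your argument instead extracts the single-crossing structure of $p_k-q_k$ from the monotonicity of $r_k=p_k/q_k$ together with the normalization, and splits into the cases $j\ge k^\star$ and $j<k^\star$, the latter handled by passing to the complementary sum $\sum_{k<j}(q_k-p_k)$. Both are correct; your version makes the mechanism (monotone likelihood ratio $\Rightarrow$ single crossing $\Rightarrow$ stochastic dominance) more transparent and avoids the somewhat opaque $1-1/x$ trick, whereas the paper's induction is more self-contained in that it never needs to locate or name a crossing index. One small point worth making explicit in your write-up: the reason the threshold satisfies $k^\star\ge1$ in the non-trivial case is that if $r_0\ge1$ then monotonicity forces $r_k\ge1$ for all $k$, and the normalization $\sum p_k=\sum q_k$ then forces $p_k\equiv q_k$; you state the conclusion but do not spell out this step.
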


\begin{proof}[Proof of Lemma~\ref{lemma:dominance}]
We proceed by induction.
The inequality \eqref{dominance} trivially holds for $j=0$.
Assume that it is true for $j$, we show it for $j+1$.
By \eqref{decaycompare}, we have
\begin{equation}
\frac{\sum_{k=j}^\infty p_k}{p_j}=1+\frac{p_{j+1}}{p_j}+\frac{p_{j+2}}{p_{j+1}}\frac{p_{j+1}}{p_j}+\dots
\ge1+\frac{q_{j+1}}{q_j}+\frac{q_{j+2}}{q_{j+1}}\frac{q_{j+1}}{q_j}+\dots=\frac{\sum_{k=j}^\infty q_k}{q_j}.
\end{equation}
By applying the increasing map $x\mapsto1-1/x$ to both sides, we arrive at the inequality
\begin{equation}\label{comparesumratios}
\frac{\sum_{k=j+1}^\infty p_k}{\sum_{k=j}^\infty p_k}\ge\frac{\sum_{k=j+1}^\infty q_k}{\sum_{k=j}^\infty q_k}.
\end{equation}
The left-hand side of \eqref{comparesumratios} is upper bounded by $\sum_{k=j+1}^\infty p_k/\sum_{k=j}^\infty q_k$ by using the induction hypothesis in the denominator.
The upper bound compared to the right-hand side of \eqref{comparesumratios} immediately yields the dominance \eqref{dominance} for $j+1$.

To prove \eqref{dominanceexpectation}, we use the telescopic decomposition $f_k=f_0+\sum_{j=1}^k(f_j-f_{j-1})$ and we write
\begin{equation}\begin{aligned}
\sum_{k=0}^\infty f_kp_k&=\sum_{k=0}^\infty\left(f_0+\sum_{j=1}^k(f_j-f_{j-1})\right)p_k\\
&=f_0\sum_{k=0}^\infty p_k+\sum_{j=1}^\infty(f_j-f_{j-1})\sum_{k=j}^\infty p_k\\
&\ge f_0\sum_{k=0}^\infty q_k+\sum_{j=1}^\infty(f_j-f_{j-1})\sum_{k=j}^\infty q_k\\
&=\sum_{k=0}^\infty f_kq_k
\end{aligned}\end{equation}
where we exchanged the order of summations in the second equality and we used \eqref{dominance} in the inequality in the third line.
\end{proof}

\subsection{Steep descent properties for geometric $q$-TASEP}
\label{ss:gsteep}

\begin{proof}[Proof of Proposition~\ref{prop:gsteep}]
\begin{enumerate}
\item
By applying Lemma~\ref{lemma:contourderivatives} with $c=a$, $r=-(a-q^\theta)$ and $\gamma=a$ or $1/a$, we have
\begin{multline}\label{f0gderivative}
\Re\frac{\d}{\d s}f_0^\g\left(\log_q(a-(a-q^\theta)e^{is})\right)\\
=\kappa_\g\sum_{k=0}^\infty g\left(-\frac{a(a-q^\theta)q^k}{1-a^2q^k},s\right)
-\sum_{k=1}^\infty g\left(-\frac{(a-q^\theta)q^k}{a(1-q^k)},s\right)
+(f_\g+1)g\left(\frac{a-q^\theta}a,s\right)
\end{multline}
where the $k=0$ term in the second sum on the right-hand side could be removed as it is $g(\infty,s)=0$.
By the fact that $f_0^\g$ has a double critical point at $\theta$, the $s$ derivative of \eqref{f0gderivative} must be $0$ at $s=0$.
Since $\frac{\d}{\d s}g(b,s)|_{s=0}=\frac b{(1-b)^2}$, we conclude that
\begin{equation}\label{ggderivativeeq}
-\kappa_\g\sum_{k=0}^\infty A_k+\sum_{k=1}^\infty B_k+(f_\g+1)C=0
\end{equation}
where
\begin{equation}\label{defABC}
A_k=\frac{\frac{a(a-q^\theta)q^k}{1-a^2q^k}}{\left(1+\frac{a(a-q^\theta)q^k}{1-a^2q^k}\right)^2},\quad
B_k=\frac{\frac{(a-q^\theta)q^k}{a(1-q^k)}}{\left(1+\frac{(a-q^\theta)q^k}{a(1-q^k)}\right)^2},\quad
C=\frac{\frac{a-q^\theta}a}{\left(1-\frac{a-q^\theta}a\right)^2}
\end{equation}
are positive coefficients.

The notation above allows us to rewrite the derivative in \eqref{f0gderivative} using the function $h$ as
\begin{multline}\label{f0gderivative2}
\Re\frac{\d}{\d s}f_0^\g\left(\log_q(a-(a-q^\theta)e^{is})\right)
=-\kappa_\g\sum_{k=0}^\infty A_kh\left(-\frac{a(a-q^\theta)q^k}{1-a^2q^k},s\right)\\
+\sum_{k=1}^\infty B_kh\left(-\frac{(a-q^\theta)q^k}{a(1-q^k)},s\right)
+(f_\g+1)Ch\left(\frac{a-q^\theta}a,s\right).
\end{multline}

In order to show that the derivative in \eqref{f0gderivative} or in \eqref{f0gderivative2} is non-positive for $s\in(0,\pi)$,
one has to see that the first sum on the right-hand side of \eqref{f0gderivative2} which is non-positive for $s\in(0,\pi)$
compensates the second sum and the third term which are non-negative.
The idea is that the second sum is completely compensated by the $k=0$ term of the first sum and the third term is compensated by the rest of the first sum.

Since $k\mapsto q^k/(1-q^k)$ is decreasing, the sequence of first arguments of $h$ in absolute value in the second sum on the right-hand side
of \eqref{f0gderivative2} is also decreasing in $k$.
We will see that this argument in the $k=1$ and hence in all terms are less than $1$ in absolute value.
The condition for this to hold is $(a-q^\theta)q/(a(1-q))<1$ and it can also be written equivalently as $2a-q^\theta<aq^{-1}$
which is exactly the condition for the poles at $W=\log_qa-1,\log_qa-2,\dots$ coming from the factor $(q^W/a;q)_\infty$
in the kernel $K^\g_z$ in \eqref{defKgz} to remain outside of $\C_\theta$.
This follows from \eqref{gthetacond}, see also Step 1 in the proof of Proposition~\ref{prop:glocalization}.

For the $k=0$ term in the first sum on the right-hand side of \eqref{f0gderivative2} to compensate alone the second sum using Lemma~\ref{lemma:hcompare},
the following conditions have to be satisfied.
Since $g(1/b,s)=g(b,s)$, if both
\begin{align}
\frac{a(a-q^\theta)}{1-a^2}&\ge\frac{(a-q^\theta)q}{a(1-q)},\label{gargumentcompare}\\
\frac{1-a^2}{a(a-q^\theta)}&\ge\frac{(a-q^\theta)q}{a(1-q)}\label{gargumentcompare2}
\end{align}
hold, then
\begin{equation}\label{ghbound}
h\left(-\frac{a(a-q^\theta)}{1-a^2},s\right)\ge h\left(\frac{(a-q^\theta)q}{a(1-q)},s\right)\ge\dots
\ge h\left(\frac{(a-q^\theta)q^k}{a(1-q^k)},s\right)\ge\dots
\end{equation}
follows by Lemma~\ref{lemma:hcompare} for $s\in[0,\pi]$.
Hence
\begin{equation}\label{f0gderivative3}
-\left(\sum_{k=1}^\infty B_k\right)h\left(-\frac{a(a-q^\theta)}{1-a^2},s\right)
+\sum_{k=1}^\infty B_kh\left(-\frac{(a-q^\theta)q^k}{a(1-q^k)},s\right)\le0
\end{equation}
is obtained by a combination of \eqref{ghbound} for $s\in(0,\pi)$.
The condition \eqref{gargumentcompare} is equivalent to $q\le a^2$ which is assumed as the lower bound in \eqref{gainterval}.
The condition \eqref{gargumentcompare2} is equivalent to
\begin{equation}\label{gaqthetasqrt}
a-q^\theta\le\sqrt{(1-a^2)\frac{1-q}q}.
\end{equation}
The upper bound in \eqref{gainterval} however implies $a^2\le1-q/(1-q)^2\le1-q/(1-q)$.
For these values of $a$ and $q$, the right-hand side of \eqref{gaqthetasqrt} is at least $1$,
but the left-hand side is less than $1$ as a difference of two numbers in $(0,1)$, hence \eqref{gaqthetasqrt} is always satisfied.

On the other hand,
\begin{equation}\label{ghbound2}
h\left(-\frac{a(a-q^\theta)q^k}{1-a^2q^k},s\right)\ge h\left(\frac{a-q^\theta}a,s\right)
\end{equation}
for $s\in(0,\pi)$ is a trivial consequence of Lemma~\ref{lemma:hcompare} for $k=0,1,2,\dots$,
since the first arguments on the two sides above have different signs.
Suppose that
\begin{equation}\label{gcoeffcond}
\kappa_\g A_0\ge\sum_{k=1}^\infty B_k
\end{equation}
holds.
Then a combination of \eqref{ghbound2} is the inequality
\begin{multline}\label{f0gderivative4}
-\left(\kappa_\g A_0-\sum_{k=1}^\infty B_k\right)h\left(-\frac{a(a-q^\theta)}{1-a^2},s\right)
-\kappa_\g\sum_{k=1}^\infty A_kh\left(-\frac{a(a-q^\theta)q^k}{1-a^2q^k},s\right)\\
+\left(\kappa_\g\sum_{k=0}^\infty A_k-\sum_{k=1}^\infty B_k\right)h\left(\frac{a-q^\theta}a,s\right)\le0
\end{multline}
for $s\in(0,\pi)$.
It follows from \eqref{ggderivativeeq} that the coefficient of $h$ in the last term on the left-hand side of \eqref{f0gderivative4} is exactly $f_\g+1$.
By this observation, adding the inequalities \eqref{f0gderivative3} and \eqref{f0gderivative4} implies that \eqref{f0gderivative2} is non-positive for $s\in(0,\pi)$ which is the steep descent property in the first part of the proposition.

We are left with proving the inequality \eqref{gcoeffcond}.
It can be written by the definitions \eqref{defABC} and after simplification as
\begin{equation}\label{gcoeffcondrewrite}
\kappa_\g\frac{1-a^2}{(1-aq^\theta)^2}\ge\sum_{k=1}^\infty\frac{(1-q^k)q^k}{(a-q^{\theta+k})^2}.
\end{equation}
Since $\kappa_\g$ and $1/(1-cq^\theta)$ are both decreasing functions of $\theta$ for any $c>0$ (see Lemma~\ref{lemma:psimonotonicity}),
both sides of \eqref{gcoeffcondrewrite} are decreasing in $\theta$.
Hence we compare the $\theta\to\infty$ limit of the left-hand side of \eqref{gcoeffcondrewrite} with the $\theta=\log_qa$ value of the right-hand side.
By the series expansion \eqref{psi'series}, one can see that $\kappa_\g\to1/a^2$ as $\theta\to\infty$,
and the left-hand side of \eqref{gcoeffcondrewrite} goes to $(1-a^2)/a^2$ as $\theta\to\infty$.
The right-hand side of \eqref{gcoeffcondrewrite} at $\theta=\log_qa$ is equal to
\begin{equation}\label{geometricsumbound}
\frac1{a^2}\sum_{k=1}^\infty\frac{q^k}{1-q^k}\le\frac1{a^2(1-q)}\sum_{k=1}^\infty q^k=\frac q{a^2(1-q)^2}.
\end{equation}
Hence we get that if
\begin{equation}\label{gcond16}
\frac{1-a^2}{a^2}\ge\frac q{a^2(1-q)^2},
\end{equation}
then \eqref{gcoeffcondrewrite} and \eqref{gcoeffcond} hold.
But \eqref{gcond16} is equivalent to the upper bound in \eqref{gainterval}, hence the first part of the proposition is proved.

\item
We apply Lemma~\ref{lemma:contourderivatives} with $c=0$, $\gamma=1$ and $r=q^\theta$ or $q^\theta/a$ to get that
\begin{equation}
-\Re\frac{\d}{\d t}f_0^\g\left(\theta+i\frac t{\log q}\right)
=-\kappa_\g\sum_{k=0}^\infty g\left(aq^{\theta+k},t\right)+\sum_{k=0}^\infty g\left(\frac{q^{\theta+k}}a,t\right).
\end{equation}
For the proof in this part, we have to see that the derivative above is non-positive for $t\in(0,\pi)$.
Rewriting the function $g$ in terms of $h$ by \eqref{defh}, $\kappa_\g$ according to its definition \eqref{defkappag} and the series expansion \eqref{psi'series} yields that we have to show the inequality
\begin{equation}\label{ghexpcompare}
\frac{\sum_{k=0}^\infty\frac{aq^{\theta+k}}{(1-aq^{\theta+k})^2}h\left(aq^{\theta+k},t\right)}
{\sum_{k=0}^\infty\frac{aq^{\theta+k}}{(1-aq^{\theta+k})^2}}
\ge\frac{\sum_{k=0}^\infty\frac{q^{\theta+k}/a}{(1-q^{\theta+k}/a)^2}h\left(q^{\theta+k}/a,t\right)}
{\sum_{k=0}^\infty\frac{q^{\theta+k}/a}{(1-q^{\theta+k}/a)^2}}
\end{equation}
for $t\in(0,\pi)$.

We use Lemma~\ref{lemma:dominance} with
\begin{equation}
p_k=\frac{\frac{aq^{\theta+k}}{(1-aq^{\theta+k})^2}}{\sum_{j=0}^\infty\frac{aq^{\theta+j}}{(1-aq^{\theta+j})^2}},\qquad
q_k=\frac{\frac{q^{\theta+k}/a}{(1-q^{\theta+k}/a)^2}}{\sum_{j=0}^\infty\frac{q^{\theta+j}/a}{(1-q^{\theta+j}/a)^2}}
\end{equation}
which are clearly probability distributions, i.e.\ they sum up to $1$.
The ratio of two consecutive weights is
\begin{equation}\label{pqratio}
\frac{p_{k+1}}{p_k}=q\frac{(1-aq^{\theta+k})^2}{(1-aq^{\theta+k+1})^2},\qquad
\frac{q_{k+1}}{q_k}=q\frac{(1-q^{\theta+k}/a)^2}{(1-q^{\theta+k+1}/a)^2}
\end{equation}
for the two sequences.
The derivative of the function $x\mapsto(1-Dx)/(1-Ex)$ is $(E-D)/(1-Ex)^2$ and it is negative for $D=q^{\theta+k}$ and $E=q^{\theta+k+1}$.
The function $x\mapsto(1-q^{\theta+k}x)/(1-q^{\theta+k+1}x)$ is decreasing, hence \eqref{decaycompare} holds for \eqref{pqratio}.
By the fact that for $t\in(0,\pi)$ the function $k\mapsto h(aq^{\theta+k},t)$ is increasing (see Lemma~\ref{lemma:hcompare}), we conclude by \eqref{dominanceexpectation} of Lemma~\ref{lemma:dominance} that
\begin{equation}
\sum_{k=0}^\infty p_kh\left(aq^{\theta+k},t\right)\ge\sum_{k=0}^\infty q_kh\left(aq^{\theta+k},t\right)
\ge\sum_{k=0}^\infty q_kh\left(q^{\theta+k}/a,t\right)
\end{equation}
where the last inequality follows because $h(aq^{\theta+k},t)\ge h(q^{\theta+k}/a,t)$ for all $k=0,1,2,\dots$
which proves \eqref{ghexpcompare} and the proposition.
\end{enumerate}
\end{proof}

\begin{proof}[Proof of Proposition~\ref{prop:gsteep2}]
\begin{enumerate}
\item
First we show that the circular part of $\C_\theta^{\log_q\ami,\delta}$, that is $\C_\theta$ is of steep descent for the function $\Re(g_0^\g)$.
By Proposition~\ref{prop:gsteep}, the contour $\C_\theta$ is of steep descent for $\Re(f_0^\g)$.
Comparing the functions $g_0^\g$ and $f_0^\g$ gives that
\begin{equation}\label{g0f0}\begin{aligned}
g_0^\g(W)-f_0^\g(W)&=(g_\g-f_\g)\log qW\\
&=\big[\kappa_\g\left(\Psi_q(\log_q\ami+\log_qa)-\Psi_q(\theta+\log_qa)\right)\\
&\qquad-\Psi_q(\log_q\ami-\log_qa)+\Psi_q(\theta-\log_qa)\big]W.
\end{aligned}\end{equation}
Next we show that the coefficient of $W$ on the right-hand side of \eqref{g0f0} is positive.
By the increasing property of $z\mapsto\Psi_q(z)$ for $z>0$, we have that $\Psi_q(\log_q\ami+\log_qa)-\Psi_q(\theta+\log_qa)>0$,
hence the positivity of the coefficient of $W$ on the right-hand side of \eqref{g0f0} can be written equivalently as
\begin{equation}\label{g0f0coeff}
\kappa_\g>\frac{\Psi_q(\log_q\ami-\log_qa)-\Psi_q(\theta-\log_qa)}{\Psi_q(\log_q\ami+\log_qa)-\Psi_q(\theta+\log_qa)}
=\frac{\Psi_q'(x-\log_qa)}{\Psi_q'(x+\log_qa)}
\end{equation}
for some $x\in(\theta,\log_q\ami)$ where the equality above follows by Cauchy's mean value theorem.
By the definition \eqref{defkappag}, one then recognizes $\kappa_\g$ with $\theta$ replaced by $x$ on the right-hand side of \eqref{g0f0coeff}.
Hence the inequality in \eqref{g0f0coeff} follows by the decreasing property of $\kappa_\g$ proved in Lemma~\ref{lemma:psimonotonicity}.

The derivative of the difference $\Re(g_0^\g-f_0^\g)$ along $\C_\theta$ is
\begin{equation}\begin{aligned}
&\Re\frac{\d}{\d s}\left(g_0^\g-f_0^\g\right)\left(\log_q(a-(a-q^\theta)e^{is})\right)\\
&\qquad=\big[\kappa_\g\left(\Psi_q(\log_q\ami+\log_qa)-\Psi_q(\theta+\log_qa)\right)\\
&\qquad\qquad-\Psi_q(\log_q\ami-\log_qa)+\Psi_q(\theta-\log_qa)\big]
\frac1{\log q}g\left(\frac{a-q^\theta}a,s\right)
\end{aligned}\end{equation}
by \eqref{g0f0} and Lemma~\ref{lemma:contourderivatives}.
The function $g\left(\frac{a-q^\theta}a,s\right)$ is positive for $s\in(0,\pi)$, but its coefficient is negative due to the $1/\log q$ factor.
This proves the steep descent property along the circular part of $\C_\theta^{\log_q\ami,\delta}$.

For the V-shaped part of the contour, we consider the Taylor expansion \eqref{g0Taylorg} of $g_0^\g$ around $\log_q\ami$
and we choose $\delta>0$ so small that the steep descent property holds along $V_{\log_q\ami,19\pi/24}^\delta$.
The angle $\pm\frac{19\pi}{24}$ of $W-\log_q\ami$ is chosen so that $\frac{3\pi}4<\frac{19\pi}{24}<\frac{5\pi}6$,
hence both $\Re((W-\log_q\ami)^2)$ and $\Re((W-\log_q\ami)^3)$ are positive.

The coefficient of the quadratic term in the expansion \eqref{g0Taylorg} is $-\sigma_\g/2$ which is negative by Lemma~\ref{lemma:psimonotonicity},
but it goes to $0$ as $\theta\to\log_q\ami$ hence we cannot use it to compensate the error term of the Taylor approximation.
The coefficient of the cubic term is $-\xi_\g/3$ which is a continuous function of $\theta$, it coincides with $-\chi_\g/3$ for $\theta=\log_q\ami$
hence it is negative for $\theta=\log_q\ami$ by Lemma~\ref{lemma:psimonotonicity}.
As a consequence the coefficient $-\xi_\g/3$ is negative also in a neighbourhood of $\theta=\log_q\ami$.

The error of the Taylor approximation in \eqref{g0Taylorg} can be upper bounded by $M|W-\log_q\ami|^4/4!$
with $M$ being the absolute supremum of the fourth derivative of $g_0^\g$ in the neighbourhood of $\log_q\ami$ in which we consider the expansion.
Since $(g_0^\g)^{(4)}(\theta)=-\kappa_\g\Psi_q'''(\theta+\log_qa)+\Psi_q'''(\theta-\log_qa)$ which is a continuous function of $\theta$,
its supremum is finite in a neighbourhood of $\theta=\log_q\ami$.
Hence the cubic term in the approximation compensates the error if the neighbourhood is chosen to be small enough.
This determines the choice of $\delta$ depending on $q,a,\ami$
for which the contour $V_{\log_q\ami,19\pi/24}^\delta$ has the steep descent property for $\Re(g_0^\g)$.
Given $\delta>0$, let $\varepsilon$ be the determined by the property that for $\theta=\log_q\ami-\varepsilon$ the contour $\C_\theta$
passes through the two endpoints of $V_{\log_q\ami,19\pi/24}^\delta$.

\item
By Proposition~\ref{prop:gsteep} for $\theta$ replaced by $\log_q\ami$,
the contour $\D_{\log_q\ami}$ is of steep descent for the function $-\Re(\wt f_0^\g)$ where
\begin{equation}
\wt f_0^\g(W)=\kappa_\g(\log_q\ami)\log(aq^W;q)_\infty-\log(q^W/a;q)_\infty+(f_\g(\log_q\ami)+1)\log q\,W
\end{equation}
with $\kappa_\g(\log_q\ami)$ and $f_\g(\log_q\ami)$ being the constants defined in \eqref{defkappag} and \eqref{deffg}
with $\theta$ replaced by $\log_q\ami$.
Comparing the functions $g_0^\g$ and $\wt f_0^\g$ gives that
\begin{multline}\label{g0f0tilde}
g_0^\g(W)-\wt f_0^\g(W)
=\left(\kappa_\g(\theta)-\kappa_\g(\log_q\ami)\right)\\
\times\left[\log(aq^W;q)_\infty+(\Psi_q(\log_q\ami+\log_qa)+\log(1-q))W\right]
\end{multline}
where $\kappa_\g(\theta)-\kappa_\g(\log_q\ami)>0$ by Lemma~\ref{lemma:psimonotonicity}.
By taking the derivative of the difference in \eqref{g0f0tilde} along $\D_{\log_q\ami}$ we find that
\begin{equation}
-\frac1{\kappa_\g(\theta)-\kappa_\g(\log_q\ami)}\Re\frac{\d}{\d s}\left(g_0^\g-\wt f_0^\g\right)\left(\log_q\ami+i\frac t{\log q}\right)
=-\sum_{k=0}^\infty g(a\ami q^k,t)
\end{equation}
which is negative for $t\in(0,\pi)$.
\end{enumerate}
\end{proof}

\subsection{Steep descent properties for geometric $q$-PushTASEP}
\label{ss:psteep}

\begin{proof}[Proof of Proposition~\ref{prop:psteep}]
\begin{enumerate}
\item
First we rewrite the second $q$-Pochhammer symbol in $f_0^\p$ in \eqref{deff0p} along $W\in\wt\C_\theta$, that is for $W(s)=\log_q(a+(q^\theta-a)e^{is})$ as
\begin{equation}
\log(aq^{-W};q)_\infty=\sum_{k=0}^\infty\left(\log(a(1-q^k)+(q^\theta-a)e^{is})-\log(a+(q^\theta-a)e^{is})\right).
\end{equation}
Then Lemma~\ref{lemma:contourderivatives} yields
\begin{multline}\label{f0pderivative}
\Re\frac{\d}{\d s}f_0^\p\left(\log_q(a+(q^\theta-a)e^{is})\right)
=\kappa_\p\sum_{k=0}^\infty g\left(\frac{a(q^\theta-a)q^k}{1-a^2q^k},s\right)\\
-\sum_{k=0}^\infty\left(g\left(-\frac{q^\theta-a}{a(1-q^k)},s\right)-g\left(-\frac{q^\theta-a}a,s\right)\right)
+(f_\p-1)g\left(-\frac{q^\theta-a}a,s\right).
\end{multline}
By the fact that $f_0^\p$ has a double critical point and by comparing the $s$ derivatives on the right-hand side of \eqref{f0pderivative}, we have
\begin{equation}\label{psumcoeff}
\kappa_\p\sum_{k=0}^\infty A_k+\sum_{k=1}^\infty(B_k-C)-f_\p C=0
\end{equation}
where
\begin{equation}\label{defABCp}
A_k=\frac{\frac{a(q^\theta-a)q^k}{1-a^2q^k}}{\left(1-\frac{a(q^\theta-a)q^k}{1-a^2q^k}\right)^2},\quad
B_k=\frac{\frac{q^\theta-a}{a(1-q^k)}}{\left(1+\frac{q^\theta-a}{a(1-q^k)}\right)^2},\quad
C=\frac{\frac{q^\theta-a}a}{\left(1+\frac{q^\theta-a}a\right)^2}
\end{equation}
are positive coefficients different from the ones defined in \eqref{defABC} in the case of the geometric $q$-TASEP.
We also used the fact that $g(\infty,s)=0$ appears in the $k=0$ in the second sum on the right-hand side of \eqref{f0pderivative}.

For the $k=1,2,\dots$ terms in the second sum on the right-hand side of \eqref{f0pderivative},
we use an idea similar to the proof of Lemma 6.5 in~\cite{Vet15}.
We apply Cauchy's mean value theorem on the interval $[b_k,c]$ with $b_k=-(q^\theta-a)/(a(1-q^k))$ and $c=-(q^\theta-a)/a$ to get that
\begin{equation}\label{cauchymeanvalue}
\frac{g(c,s)-g(b_k,s)}{\frac c{(1-c)^2}-\frac{b_k}{(1-b_k)^2}}=\frac{\frac\d{\d b}g(b,s)|_{b=x}}{\frac\d{\d b}\frac b{(1-b)^2}|_{b=x}}
=(h(x,s))^2\frac1{\sin s}
\end{equation}
with some $x\in(b_k,c)$ where the last equality in \eqref{cauchymeanvalue} follows by direct computation.
Note that $b_1<b_2<\dots<c$ and that the condition \eqref{pcondpole} is equivalent to $-1\le(q^\theta-a)/(a(1-q))=b_1$.
Hence all the $b_k$s for $k=1,2,\dots$ and $c$ lie in the interval $[-1,0]$.
One can check that the functions $b\mapsto g(b,s)$ for $s\in(0,\pi)$ and $b\mapsto b/(1-b)^2$ are both increasing on $b\in[-1,0]$.
As a consequence, both the numerator and the denominator on the left-hand side of \eqref{cauchymeanvalue} are positive.
By Lemma~\ref{lemma:hcompare}, we can upper bound the right-hand side of \eqref{cauchymeanvalue}
by using the inequality $h(x,s)\le h(b_1,s)$ for $s\in(0,\pi)$ because $b_1=\inf\{(b_k,c),k\ge1\}$.
After multiplying by the denominator $c/(1-c)^2-b_k/(1-b_k)^2=B_k-C>0$
on the left-hand side of \eqref{cauchymeanvalue}, we get that
\begin{equation}\label{ccauchybound}
g\left(-\frac{q^\theta-a}a,s\right)-g\left(-\frac{q^\theta-a}{a(1-q^k)},s\right)
\le\left(B_k-C\right)h\left(-\frac{q^\theta-a}{a(1-q)},s\right)^2\frac1{\sin s}
\end{equation}
for $s\in(0,\pi)$.

On the other hand, by the monotonicity of $h$ in the first variable (see Lemma~\ref{lemma:hcompare}) and by the property $h(1/b,s)=h(b,s)$,
we have the inequality
\begin{equation}\label{phcompare}
h\left(\frac{a(q^\theta-a)q^k}{1-a^2q^k},s\right)\le h(0,s)=\sin s
\end{equation}
for $k=0,1,2,\dots$ for $s\in(0,\pi)$.
By rewriting the right-hand side of \eqref{f0pderivative} and then using \eqref{ccauchybound} and \eqref{phcompare}, we get that
\begin{multline}\label{f0pderivative2}
\Re\frac{\d}{\d s}f_0^\p\left(\log_q(a+(q^\theta-a)e^{is})\right)\\
\le\kappa_\p\sum_{k=0}^\infty A_k\sin s
+\sum_{k=1}^\infty\left(B_k-C\right)h\left(-\frac{q^\theta-a}{a(1-q)},s\right)^2\frac1{\sin s}-f_\p Ch\left(-\frac{q^\theta-a}a,s\right).
\end{multline}

By recalling the notation $b_1=-(q^\theta-a)/(a(1-q))$ and $c=-(q^\theta-a)/a$,
we can rewrite two terms containing the function $h$ on the right-hand side of \eqref{f0pderivative2} using \eqref{hwithe} as
\begin{align}
\frac{h(b_1,s)^2}{\sin s}&=\left(1-\frac{4b_1(1-\cos s)}{(1+b_1)^2}e(b_1,s)+\frac{4b_1^2(1-\cos s)^2}{(1+b_1)^4}e(b_1,s)^2\right)\sin s,\label{hb1}\\
h(c,s)&=\left(1-\frac{2c(1-\cos s)}{(1+c)^2}e(c,s)\right)\sin s.\label{hc}
\end{align}
Next we substitute \eqref{hb1}--\eqref{hc} on the right-hand side of \eqref{f0pderivative2} and
we separate the terms containing $\sin s$ only and those containing the function $e$.
The sum of the coefficients of the terms containing only $\sin s$ is $0$ by \eqref{psumcoeff}.
Hence for the non-negativity of the derivative in \eqref{f0pderivative2} for $s\in(0,\pi)$ it suffices to show that
\begin{equation}\label{pcondtech2}
\sum_{k=1}^\infty\left(B_k-C\right)\left(\frac{-4b_1}{(1+b_1)^2}e(b_1,s)+\frac{4b_1^2(1-\cos s)}{(1+b_1)^4}e(b_1,s)^2\right)
\le f_\p C\frac{-2c}{(1+c)^2}e(c,s)
\end{equation}
after simplifying by a factor $(1-\cos s)$.

In order to show \eqref{pcondtech2}, we upper bound its left-hand side and lower bound its right-hand side.
First we deal with $B_k-C$ defined in \eqref{defABCp} for which direct computations yield that
\begin{multline}
B_k-C=\frac{aq^k(q^\theta-a)(q^{2\theta}-2aq^\theta+a^2q^k)}{q^{2\theta}(q^\theta-aq^k)^2}
\le\frac{a(q^\theta-a)}{q^{2\theta}}q^k\frac{q^{2\theta}-2aq^\theta+a^2}{(q^\theta-a)^2}
\end{multline}
where the inequality above follows from applying $q^k\le1$ twice, hence
\begin{equation}\label{sumBkCbound}
\sum_{k=1}^\infty(B_k-C)\le\sum_{k=1}^\infty\frac{a(q^\theta-a)}{q^{2\theta}}q^k=\frac{aq(q^\theta-a)}{q^{2\theta}(1-q)}.
\end{equation}
To bound the other factor on the left-hand side of \eqref{pcondtech2},
we use that $e(b_1,s)^2\le e(b_1,s)\le e(c,s)$ by Lemma~\ref{lemma:eproperties} and the inequality $1-\cos s\le2$.
We get that
\begin{equation}\label{2eterm}\begin{aligned}
&\frac{-4b_1}{(1+b_1)^2}e(b_1,s)+\frac{4b_1^2(1-\cos s)}{(1+b_1)^4}e(b_1,s)^2\\
&\qquad\le\left(\frac{-4b_1}{(1+b_1)^2}+\frac{8b_1^2}{(1+b_1)^4}\right)e(c,s)\\
&\qquad=\frac{4a(1-q)(q^\theta-a)\left(a^2(1-q)^2+(q^\theta-a)^2\right)}{(2a-q^\theta-aq)^4}e(c,s).
\end{aligned}\end{equation}

On the right-hand side of \eqref{pcondtech2}, we have
\begin{equation}\label{Cccompute}
C=\frac{a(q^\theta-a)}{q^{2\theta}},\qquad\frac{-2c}{(1+c)^2}=\frac{2a(q^\theta-a)}{(2a-q^\theta)^2}.
\end{equation}
Putting together the bounds \eqref{sumBkCbound} and \eqref{2eterm} and the equalities in \eqref{Cccompute},
we obtain that after simplifications the inequality \eqref{pcondtech2} is implied by the condition \eqref{pcondtech}.
This yields the required steep descent property.

\item
Lemma~\ref{lemma:contourderivatives} with $c=0$, $\gamma=1$ and $r=aq^\theta$ and the $\Re(f_0^\p(\overline w))=\Re(f_0^\p(w))$ symmetry implies that
\begin{equation}
-\Re\frac{\d}{\d t}f_0^\p\left(\theta+i\frac t{\log q}\right)
=-\kappa_\p\sum_{k=0}^\infty g\left(aq^{\theta+k},t\right)+\sum_{k=0}^\infty g\left(aq^{-\theta+k},t\right).
\end{equation}
As in the proof of Proposition~\ref{prop:gsteep}, the non-positivity of the derivative above for $t\in(0,\pi)$ is equivalent to
\begin{equation}
\frac{\sum_{k=0}^\infty\frac{aq^{\theta+k}}{(1-aq^{\theta+k})^2}h\left(aq^{\theta+k},t\right)}
{\sum_{k=0}^\infty\frac{aq^{\theta+k}}{(1-aq^{\theta+k})^2}}
\ge\frac{\sum_{k=0}^\infty\frac{aq^{-\theta+k}}{(1-aq^{-\theta+k})^2}h\left(aq^{-\theta+k},t\right)}
{\sum_{k=0}^\infty\frac{aq^{-\theta+k}}{(1-aq^{-\theta+k})^2}}
\end{equation}
for $t\in(0,\pi)$ which has the same proof as \eqref{ghexpcompare}.
\end{enumerate}
\end{proof}

\begin{proof}[Proof of Proposition~\ref{prop:psteep2}]
\begin{enumerate}
\item
First we consider the circular part of $\wt\C_\theta^{\,\log_q\ama,\delta}$
By Proposition~\ref{prop:psteep}, $\wt\C_\theta$ is of steep descent for $\Re(f_0^\p)$.
Comparing the functions $g_0^\p$ and $f_0^\p$ yields
\begin{equation}\label{g0f0p}\begin{aligned}
g_0^\p(W)-f_0^\p(W)&=(g_\p-f_\p)\log qW\\
&=\big[\kappa_\p\left(\Psi_q(\log_qa+\log_q\ama)-\Psi_q(\log_qa+\theta)\right)\\
&\qquad+\Psi_q(\log_qa-\log_q\ama)-\Psi_q(\log_qa-\theta)\big]W.
\end{aligned}\end{equation}
By the increasing property of $z\mapsto\Psi_q(z)$ for $z>0$, $\Psi_q(\log_qa+\log_q\ama)-\Psi_q(\log_qa+\theta)<0$,
hence the negativity of the coefficient of $W$ on the right-hand side of \eqref{g0f0p} is equivalent to
\begin{equation}\label{g0f0coeffp}
\kappa_\p>\frac{-\Psi_q(\log_qa-\log_q\ama)+\Psi_q(\log_qa-\theta)}{\Psi_q(\log_qa+\log_q\ama)-\Psi_q(\log_qa+\theta)}
=\frac{\Psi_q'(\log_qa-x)}{\Psi_q'(\log_qa+x)}
\end{equation}
for some $x\in(\log_q\ama,\theta)$ by Cauchy's mean value theorem.
By \eqref{defkappap}, the right-hand side of \eqref{g0f0coeffp} is equal to $\kappa_\p$ with $\theta$ replaced by $x$
and \eqref{g0f0coeffp} follows by the increasing property of $\kappa_\p$, see Lemma~\ref{lemma:psimonotonicity}.

The derivative of the difference $\Re(g_0^\p-f_0^\p)$ along $\wt\C_\theta$ is
\begin{equation}\begin{aligned}
&\Re\frac{\d}{\d s}\left(g_0^\p-f_0^\p\right)\left(\log_q(a+(q^\theta-a)e^{is})\right)\\
&\qquad=\big[\kappa_\p\left(\Psi_q(\log_qa+\log_q\ama)-\Psi_q(\log_qa+\theta)\right)\\
&\qquad\qquad+\Psi_q(\log_qa-\log_q\ama)-\Psi_q(\log_qa-\theta)\big]
\frac1{\log q}g\left(-\frac{q^\theta-a}a,s\right)
\end{aligned}\end{equation}
by \eqref{g0f0p} and Lemma~\ref{lemma:contourderivatives}.
The function $g\left(-\frac{q^\theta-a}a,s\right)$ is negative for $s\in(0,\pi)$,
hence the steep descent property along the circular part of $\wt\C_\theta^{\,\log_q\ama,\delta}$ follows.

On the V-shaped part $\Re((W-\log_q\ama)^2)>0$ and $\Re((W-\log_q\ama)^3)<0$ in the Taylor expansion \eqref{g0Taylorp}.
The coefficient of the quadratic term $-\sigma_\p/2$ is negative by Lemma~\ref{lemma:psimonotonicity}.
That of the cubic term $\xi_\p/3$ is positive in a neighbourhood of $\theta=\log_q\ama$
as a continuous function of $\theta$ which coincides with $\chi_\p/3>0$ for $\theta=\log_q\ama$.
The error term of the Taylor approximation is bounded by the cubic term in a neighbourhood of $\log_q\ama$
in the same way as in the proof of Proposition~\ref{prop:gsteep2}.
This yields that $V_{\log_q\ama,5\pi/24}^\delta$ has the steep descent property for $\Re(g_0^\p)$ with some small enough $\delta>0$
and $\varepsilon$ is such that the endpoints of the V-shaped contour lie on $\wt\C_\theta$ for $\theta=\log_q\ama+\varepsilon$.

\item
By Proposition~\ref{prop:gsteep}, $\D_{\log_q\ama}$ is a steep descent contour for the function $-\Re(\wt f_0^\p)$ with
\begin{equation}
\wt f_0^\p(W)=\kappa_\p(\log_q\ama)\log(aq^W;q)_\infty-\log(aq^{-W};q)_\infty+(f_\p(\log_q\ama)-1)\log q\,W.
\end{equation}
Comparing $g_0^\p$ and $\wt f_0^\p$ yields
\begin{multline}\label{g0f0tildep}
g_0^\p(W)-\wt f_0^\p(W)
=\left(\kappa_\p(\theta)-\kappa_\p(\log_q\ama)\right)\\
\times\left[\log(aq^W;q)_\infty+(\Psi_q(\log_qa+\log_q\ama)+\log(1-q))W\right]
\end{multline}
with $\kappa_\p(\theta)-\kappa_\p(\log_q\ama)>0$ by Lemma~\ref{lemma:psimonotonicity}.
For its derivative along $\D_{\log_q\ama}$ we have
\begin{multline}
-\frac1{\kappa_\p(\theta)-\kappa_\p(\log_q\ama)}\Re\frac{\d}{\d s}\left(g_0^\p-\wt f_0^\p\right)\left(\log_q\ama+i\frac t{\log q}\right)\\
=-\sum_{k=0}^\infty g(a\ama q^k,t)
\end{multline}
which is negative for $t\in(0,\pi)$.
\end{enumerate}
\end{proof}

\subsection{Steep descent properties for $q$-PushASEP}
\label{ss:asteep}

\begin{proof}[Proof of Proposition~\ref{prop:asteep}]
\begin{enumerate}
\item
A direct computation yields that
\begin{equation}\label{athirdterm}
\Re\frac{\d}{\d s}q^{-\log_q(1-re^{is})}=-\frac{r(1-r^2)\sin s}{(1+r^2-2r\cos s)^2}
\end{equation}
which is the derivative of the third term of $\Re(f_0^\a)$ along $\ol\C_\theta$ with $r=1-q^\theta$.
Using Lemma~\ref{lemma:contourderivatives} and \eqref{athirdterm} and the fact that $g(\infty,s)=0$ we have
\begin{equation}\label{f0aderivative}\begin{aligned}
&-\Re\frac{\d}{\d s}f_0^\a\left(\log_q(1-(1-q^\theta)e^{is})\right)\\
&\quad=-\sum_{k=1}^\infty g\left(-\frac{(1-q^\theta)q^k}{1-q^k},s\right)-\kappa_\a R(1-q^\theta)\sin s\\
&\qquad+\kappa_\a L\frac{(1-q^\theta)(1-(1-q^\theta)^2)\sin s}{(1+(1-q^\theta)^2-2(1-q^\theta)\cos s)^2}
+(f_\a+1)g\left(1-q^\theta,s\right)\\
&\quad=\sum_{k=1}^\infty A_k h\left(-\frac{(1-q^\theta)q^k}{1-q^k},s\right)-\kappa_\a R(1-q^\theta)h(0,s)\\
&\qquad+\kappa_\a L\frac{(1-q^\theta)(1-(1-q^\theta)^2)}{q^{4\theta}}\frac{h(1-q^\theta,s)^2}{\sin s}+(f_\a+1)\frac{1-q^\theta}{q^{2\theta}}h(1-q^\theta,s)
\end{aligned}\end{equation}
where the last equality follows by writing all terms using the function $h$ defined in \eqref{defh} and
\begin{equation}
A_k=\frac{\frac{(1-q^\theta)q^k}{1-q^k}}{\left(1+\frac{(1-q^\theta)q^k}{1-q^k}\right)^2}=\frac{(1-q^\theta)q^k(1-q^k)}{(1-q^{\theta+k})^2}.
\end{equation}

As in the proof of Proposition~\ref{prop:psteep}, we rewrite all the terms on the right-hand side of \eqref{f0aderivative} using \eqref{defe} as
\begin{multline}\label{hwithea1}
h\left(-\frac{(1-q^\theta)q^k}{1-q^k},s\right)\\
=\left(1+\frac{2(1-q^\theta)q^k(1-q^k)(1-\cos s)}{(1-2q^k+q^{\theta+k})^2}e\left(-\frac{(1-q^\theta)q^k}{1-q^k},s\right)\right)\sin s
\end{multline}
and
\begin{equation}\label{hwithea2}
h\left(1-q^\theta,s\right)=\left(1-\frac{2(1-q^\theta)(1-\cos s)}{(2-q^\theta)^2}e(1-q^\theta,s)\right)\sin s.
\end{equation}
Next we use the fact that the function $f_0^\a$ has a double critical point at $\theta$.
Since the derivative of $h(b,s)$ at $s=0$ is equal to $1$ for all $b$ it implies
that the sum of the coefficients on the right-hand side of \eqref{f0aderivative} in front of the $h$ functions is zero.
As a consequence after substituting \eqref{hwithea1}--\eqref{hwithea2} to the right-hand side of \eqref{f0aderivative}
the main terms not containing the function $e$ exactly cancel.
After removing the terms that cancel and simplifying, we are left with
\begin{equation}\label{f0aderivative2}\begin{aligned}
&-\frac1{(1-\cos s)\sin s}\Re\frac{\d}{\d s}f_0^\a\left(\log_q(1-(1-q^\theta)e^{is})\right)\\
&\quad=\sum_{k=1}^\infty\frac{2(1-q^\theta)^2q^{2k}(1-q^k)^2}{(1-q^{\theta+k})^2(1-2q^k+q^{\theta+k})^2}e\left(-\frac{(1-q^\theta)q^k}{1-q^k},s\right)\\
&\qquad-\kappa_\a L\frac{4(1-q^\theta)^2}{q^{3\theta}(2-q^\theta)}\left[1-\frac{(1-q^\theta)(1-\cos s)}{(2-q^\theta)^2}e(1-q^\theta,s)\right]e(1-q^\theta,s)\\
&\qquad-(f_\a+1)\frac{2(1-q^\theta)^2}{q^{2\theta}(2-q^\theta)^2}e(1-q^\theta,s).
\end{aligned}\end{equation}

In order to complete the argument for the steep descent property,
we take the asymptotic behaviour of the right-hand side of \eqref{f0aderivative2} as $\theta\to0$ for $q,R,L$ fixed.
The first term on the right-hand side of \eqref{f0aderivative2} containing the infinite sum behaves like
a constant multiple of $(1-q^\theta)^2\sim(\log q)^2\theta^2$.
Here we used that the first argument of the $e$ function is negative, hence by Lemma~\ref{lemma:eproperties}, the function is bounded by $1$.
As we will see, this first term is negligible compared to the other terms.

In the second term on the right-hand side of \eqref{f0aderivative2} first we claim that the difference in the parenthesis converges to $1$ uniformly in $s$.
This is because $e(1-q^\theta,s)\le(2-q^\theta)^2q^{-2\theta}\to1$ by Lemma~\ref{lemma:eproperties},
hence the $(1-q^\theta)\to0$ factor in the second term of the parenthesis makes this term negligible.
For the main factor, we claim that
\begin{equation}\label{kappaasymp}
\kappa_\a\sim\frac1{R+L}\frac1{(\log q)^2}\frac1{\theta^2}
\end{equation}
as $\theta\to0$ which can be seen by definition \eqref{defkappaa} and using \eqref{psi'series}.
Hence the coefficient of the $e(1-q^\theta,s)$ function in the second term on the right-hand side of \eqref{f0aderivative2}
converges to $-4L/(R+L)$ as $\theta\to0$.

The asymptotics of the last term on the right-hand side of \eqref{f0aderivative2} can be found similarly.
By \eqref{deffa}, \eqref{kappaasymp} and \eqref{psiseries}, we have that
\begin{equation}
f_\a+1\sim\frac{R-L}{R+L}\frac1{(\log q)^2}\frac1{\theta^2}
\end{equation}
as $\theta\to0$.
Therefore, the coefficient of $e(1-q^\theta,s)$ in the third term on the right-hand side of \eqref{f0aderivative2} goes to $2(L-R)/(R+L)$ as $\theta\to0$.

To summarize we see that the derivative in \eqref{f0aderivative2} as $\theta\to0$ is asymptotically equal to $-2e(1-q^\theta,s)\le-2$
by Lemma~\ref{lemma:eproperties} uniformly in $s$, hence the steep descent property follows for $\theta$ close enough to $0$.

\item
Lemma~\ref{lemma:contourderivatives} with $c=0$, $\gamma=1$ and $r=q^\theta$ and by direct computation we get that
\begin{equation}\label{f0aderivativet}\begin{aligned}
\Re\frac{\d}{\d t}f_0^\p\left(\theta+i\frac t{\log q}\right)
&=\sum_{k=0}^\infty g\left(q^{\theta+k},t\right)-\kappa_\a Rq^\theta\sin t-\kappa_\a Lq^{-\theta}\sin t\\
&=\sum_{k=0}^\infty\frac{q^{\theta+k}}{(1-q^{\theta+k})^2}h(q^{\theta+k,t})-\kappa_\a(Rq^\theta+Lq^{-\theta})h(0,t)
\end{aligned}\end{equation}
by writing the derivative in terms of the function $h$ using \eqref{defh}.
By Lemma~\ref{lemma:hcompare}, $h(q^{\theta+k},t)\le h(0,t)$ for all $t\in(0,\pi)$
which can be used to upper bound the first sum on the right-hand side of \eqref{f0aderivativet}.
By the fact that $\theta$ is a double critical point for the function $f_0^\a$,
the sum of the coefficients on the right-hand side of \eqref{f0aderivativet} is zero
which can also be seen directly by \eqref{defkappaa} and \eqref{psi'series}.
Hence the bound $h(q^{\theta+k},t)\le h(0,t)$ implies that the derivative \eqref{f0aderivativet} is non-positive for $t\in(0,\pi)$
which yields the steep descent property.
\end{enumerate}
\end{proof}



\bibliography{Biblio}
\bibliographystyle{alpha}
\end{document}